\documentclass[11pt]{amsart}

\usepackage[sc]{mathpazo}
\usepackage[margin=1in]{geometry}
\usepackage{amssymb}
\usepackage{hyperref}
\usepackage{amsmath,amscd}
\usepackage[all,cmtip]{xy}
\usepackage{qtree}
\usepackage{color}
\usepackage{array}
\newcolumntype{C}{>{$}c<{$}}

\newtheorem{theorem}{Theorem}[section]
\newtheorem{lemma}[theorem]{Lemma}
\newtheorem{proposition}[theorem]{Proposition}
\newtheorem{corollary}[theorem]{Corollary}

\theoremstyle{definition}

\newtheorem{example}[theorem]{Example}

\theoremstyle{remark}
\newtheorem{remark}[theorem]{Remark}


\def\ZZ{{\mathbb Z}}

\def\RR{{\mathbb R}}
\def\CC{{\mathbb C}}
\def\FF{{\mathbb F}}

\def\SS{{\mathfrak S}}

\def\supp{\mathrm{supp}}
\def\Aut{\mathrm{Aut}}
\def\GL{\mathrm{GL}} 
 
\def\tr{\mathrm{tr}}

\DeclareMathOperator{\Mat}{Mat}

\DeclareMathOperator{\rank}{rank}

\def\id{\mathrm{id}}
\def\T{\mathcal{T}}
\def\qand{\quad\text{and}\quad}

\newcommand\qbinom[2]{\left[\begin{matrix} #1 \\ #2 \end{matrix} \right]}

\begin{document}

\title{Norton algebras of the Hamming Graphs via linear characters} 
\author{Jia Huang}
\address{Department of Mathematics and Statistics, University of Nebraska, Kearney, NE 68849, USA}
\email{huangj2@unk.edu}

\pagestyle{plain}

\begin{abstract}
The Norton product is defined on each eigenspace of a distance regular graph by the orthogonal projection of the entry-wise product.
The resulting algebra, known as the Norton algebra, is a commutative nonassociative algebra that is useful in group theory due to its interesting automorphism group.
We provide a formula for the Norton product on each eigenspace of a Hamming graph using linear characters.
We construct a large subgroup of automorphisms of the Norton algebra of a Hamming graph and completely describe the automorphism group in some cases.
We also show that the Norton product on each eigenspace of a Hamming graph is as nonassociative as possible, except for some special cases in which it is either associative or equally as nonassociative as the so-called double minus operation previously studied by the author, Mickey, and Xu.
Our results restrict to the hypercubes and extend to the halved and/or folded cubes, the bilinear forms graphs, and more generally, all Cayley graphs of finite abelian groups.
\end{abstract}

\keywords{Hamming graph, halved and folded cube, bilinear forms graphs, Norton algebra, wreath product, nonassociativity}


\maketitle

\section{Introduction}

Distance regular graphs have many nice algebraic and combinatorial properties and have been extensively studied. 
For instance, (the adjacency matrix $A$ of ) a distance regular graph $\Gamma=(X,E)$ with vertex set $X$ and edge set $E$ has $d+1$ distinct eigenvalues $\theta_0 > \theta_1 > \cdots > \theta_d$ and the corresponding eigenspaces $V_0, V_1, \ldots, V_d$ form a direct sum decomposition of the vector space $\RR^X:= \{ f: X\to \RR \} \cong \RR^{|X|}$, where $d$ is the diameter of $\Gamma$.
Furthermore, there is a general method to obtain the eigenvalues and eigenspaces of a distance regular graph; see, for example, Brouwer, Cohen and Neumaier~\cite[\S4.1]{DistReg1}.

One can define an interesting product on each eigenspace $V_i$ of a distance regular graph $\Gamma$ by doing the entry-wise product of two eigenvectors in $V_i$ and projecting the resulting vector back to $V_i$.
The gives an algebra, known as the \emph{Norton algebra}, which is commutative but not necessarily associative.
It was studied in group theory due to its interesting automorphism group~\cite{Norton1, Norton2}.

When $\Gamma$ belongs to certain important families of distance regular graphs (i.e., the Johnson graphs, Grassmann graphs, dual polar graphs, and hypercube graphs), Levstein, Maldonado and Penazzi~\cite{DualPolarGraph,NortonAlgebra} constructed the eigenspaces from a filtration of vector spaces corresponding to a graded lattice associated with $\Gamma$, and derived an explicit formula for the Norton product on the eigenspace of $V_1$.
Recently Terwilliger~\cite{Terwilliger} obtained a more general formula for $Q$-polynomial distance-regular graphs. 
But for $i\ge2$ the Norton algebra structure on $V_i$ has not been determined.

In this paper we focus on the \emph{Hamming graph} $H(n,e)$, whose vertex set $X$ consists of all words of length $n$ on the alphabet $\{0,1,\ldots,e-1\}$ and whose edge set $E$ consists of all unordered pairs of vertices differing in exactly one position.
As an important family of distance regular graphs, the Hamming graphs are useful in multiple branches of mathematics and computer science.
Their eigenvalues are well known~\cite[\S9.2]{DistReg1} and their eigenspaces have been investigated from various perspectives.
For example, Valyuzhenich and Vorob'ev~\cite{Supp} studied the minimum cardinality of the support of an eigenvector of a Hamming graph, and for certain Hamming graphs with special parameters, Sander~\cite{Sander} constructed bases for their eigenspaces using vectors over $\{0,1,-1\}$.

If we allow extension of scalars to the complex field $\CC$, there is a nice complex eigenbasis of the Hamming graph $H(n,e)$ consisting of all linear characters of its vertex set $X=\ZZ_e^n$ viewed as a group, and a real eigenbasis can be obtained by taking the real and imaginary parts of these characters.
In fact, this is valid for any Cayley graph of a finite abelian group; see, for example, Lov\'{a}sz~\cite[Exercise~11.8]{Lovasz79} and DeVos--Goddyn--Mohar--\v{S}\'{a}mal~\cite{CayleySum}.
It will be an interesting problem to determine whether an eigenbasis over $\ZZ$ (or even over $\{0,1,-1\}$) can be obtained.

As an application of the linear character approach, we provide a formula for the Norton product on each eigenspace $V_i$ of the Hamming graph $H(n,e)$, and use this formula to study the automorphism group $\Aut(V_i)$ of the Norton algebra $V_i$.
It is known that the automorphisms of the Hamming graph $H(n,e)$ form a group isomorphic to the wreath product $\SS_e \wr \SS_n$~\cite[Theorem~9.2.1]{DistReg1}.
We show that $\Aut(V_1)\cong \SS_e\wr\SS_n$ by constructing all idempotents in $V_1$, but $\Aut(V_i)$ could be much smaller or bigger than this group is $i\ne 1$.
In general, we construct a large subgroup of $\Aut(V_i)$, which is related to the wreath product of the semidirect product $\ZZ_e\rtimes\ZZ_e^\times$ of the group $\ZZ_e$ and its multiplicative group $\ZZ_e^\times$ with the symmetric group $\SS_n$.
A complete description of $\Aut(V_i)$ for $i\ge2$ will be a problem for future study.

We also determine the extent to which the Norton product on $V_i$ is nonassociative. 
For a given binary operation $*$ on a set $Z$, let $C_{*,m}$ be the number of distinct results obtained by inserting parentheses into the expression $z_0* z_1* \cdots * z_m$, where $z_0, z_1,\ldots, z_m$ are $Z$-valued indeterminates.
It is well known that $C_{*,m}$ is bounded above by the ubiquitous \emph{Catalan number} $C_m:=\frac{1}{m+1}\binom{2m}{m}$.
We have $C_{*,m}=1$ for all $m\ge0$ if and only if $*$ is associative, and we say $*$ is \emph{totally nonassociative} if $C_{*,m}=C_m$ for all $m\ge0$.
In general, $C_{*,m}$ measures how far the operation $*$ is from being associative.  
The sequence $(C_{*,m})_{m\ge0}$ was called the \emph{associative spectrum} of the binary operation $*$ by Cs\'{a}k\'{a}ny and Waldhauser~\cite{AssociativeSpectra1}.
Braitt and Silberger~\cite{Subassociative} studied this sequence for a groupoid $(G,*)$ and called it the \emph{subassociativity type} of $(G,*)$.
Independently, Hein and the author~\cite{CatMod} also proposed the study of $C_{*,m}$ for a binary operation $*$.
For further investigations of this nonassociativity measurement, see, e.g., Hein and the author~\cite{VarCat} and Liebscher and Waldhauser~\cite{AssociativeSpectra2}.

We show that the Norton product $\star$ on each eigenspace $V_i$ is totally nonassociative except for some special cases, in which it is either associative for some trivial reasons or equally as nonassociative as the \emph{double minus operation} $\ominus$ defined by $a\ominus b:=-a-b$ for all $a,b\in\CC$, in the sense that any two ways to parenthesize $z_0\ominus z_1\ominus \cdots \ominus z_m$ produce distinct results if and only if so do the same two ways to parenthesize of $z_0\star z_1\star\cdots \star z_m$.
Therefore in the last case we have $C_{\star,m}=C_{\ominus,m}$ given by the sequence A000975~\cite{A000975} in OEIS~\cite{OEIS}, according to Cs\'{a}k\'{a}ny and Waldhauser~\cite{AssociativeSpectra1} and work of the author, Mickey, and Xu~\cite{DoubleMinus}.

Below is a summary of our results on the Norton algebra of the Hamming graph $H(n,e)$.

\begin{theorem}\label{thm:Hamming}
For $i=0,1,\ldots,n$, the (complex) Norton algebra $V_i$ of $H(n,e)$ satisfies the following.
\begin{itemize}
\item
It has a basis $\left\{\chi_u : u\in X_i \right\}$, where $X_i$ is the set of elements in $X=\ZZ_e^n$ with exactly $i$ nonzero entries, such that if $u,v\in X_i$ then
\[ \chi_u \star \chi_v =
\begin{cases}
\chi_{u+v} & \text{if } u+v\in X_i \\
0 & \text{otherwise}.
\end{cases} \]
\item
For $e\ge3$, its automorphism group is trivial if $i=0$, is isomorphic to $\SS_e\wr\SS_n$ if $i=1$ or $\SS_3\wr\SS_{2^{n-1}}$ if $i=n$ and $e=3$, and admits a subgroup isomorphic to $(\ZZ_e\rtimes\ZZ_e^\times) \wr \SS_n$ if $i\ge1$. 
\item
Its product $\star$ is associative if $i=0$, equally as nonassociative as the double minus operation $\ominus$ if $e=3$ and $i\in\{1,n\}$, or totally nonassociative if $e=3$ and $1<i<n$ or if $e\ge4$ and $1\le i\le n$.
\end{itemize} 
\end{theorem}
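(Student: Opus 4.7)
The plan splits into three stages matching the theorem's three bullets.

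\emph{Stage 1 (basis and product formula).} Identifying $X = \ZZ_e^n$ with its additive group, the linear characters $\chi_u(x) = \zeta^{u\cdot x}$ (with $\zeta = e^{2\pi i/e}$, $u \in \ZZ_e^n$) are simultaneous eigenvectors of the adjacency matrix of $H(n,e)$, and a short computation shows the eigenvalue depends only on $\wt(u)$; so $\chi_u \in V_i$ precisely when $\wt(u) = i$, giving the claimed basis. Since entry-wise multiplication of characters is $\chi_u \chi_v = \chi_{u+v} \in V_{\wt(u+v)}$, the orthogonal projection onto $V_i$ returns $\chi_{u+v}$ when $\wt(u+v) = i$ and vanishes otherwise.

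\emph{Stage 2 (automorphism group).} The case $i = 0$ is immediate since $V_0 \cong \CC$. To realize $(\ZZ_e \rtimes \ZZ_e^\times) \wr \SS_n$ as a subgroup of $\Aut(V_i)$ for all $i \ge 1$, I would define three families of maps on the $\chi_u$-basis: coordinate permutations by $\SS_n$, unit rescalings $u_j \mapsto k u_j$ for $k \in \ZZ_e^\times$, and character twists $\chi_u \mapsto \zeta^{t u_j} \chi_u$ for $t \in \ZZ_e$; each preserves $\star$ directly from Stage 1, and the commutation relations deliver the wreath-and-semidirect structure. For $i = 1$ with $e \ge 3$, I write $V_1 = \bigoplus_{j=1}^n W_j$ where $W_j = \vspan\{\chi_{ke_j} : 1 \le k \le e-1\}$ and products across distinct $W_j$'s vanish, so each $W_j$ is an ideal isomorphic to a common $(e-1)$-dimensional algebra $W$. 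Passing to Fourier coordinates on $\CC[\ZZ_e]$ identifies $W$ with $\{v \in \CC^e : \sum_k v_k = 0\}$ under the projected pointwise product $v \star w = vw - \tfrac{1}{e}(\sum_l v_l w_l)\mathbf{1}$; solving $f \star f = f$ forces each coordinate of $f$ to take one of two values, and the characteristic polynomial of the left multiplication $M_f$ isolates $e$ ``type-one'' idempotents whose Fourier coordinates take one value at a single position and the opposite value at the remaining $e-1$ positions. These $e$ elements span $W$, so the induced action embeds $\Aut(W) \hookrightarrow \SS_e$, and $\SS_e$ acts by coordinate permutations of $\CC^e$ to give the converse; combined with indecomposability of $W$ (established by showing $fW = W$ for every nonzero idempotent $f$ via a rank computation on $M_f$), this upgrades to $\Aut(V_1) = \SS_e \wr \SS_n$. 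For $i = n$ and $e = 3$, the set $X_n$ partitions into $2^{n-1}$ pairs $\{u, 2u\}$, each spanning a two-dimensional subalgebra isomorphic to the $(e = 3)$-copy of $W$, so the same analysis delivers $\Aut(V_n) \cong \SS_3 \wr \SS_{2^{n-1}}$.

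\emph{Stage 3 (nonassociativity).} For $i \ge 1$ the Stage 1 formula implies that any parenthesization $P$ of $\chi_{u_0} \star \cdots \star \chi_{u_m}$ equals $\chi_{u_0 + \cdots + u_m}$ when every internal-node subtree-sum of $P$ lies in $X_i$, and $0$ otherwise; so $P$ and $Q$ define the same function on $V_i^{m+1}$ iff they share the same nonvanishing region in $X_i^{m+1}$. In the two double-minus cases $e = 3$ with $i \in \{1, n\}$, the Stage 2 decomposition reduces the associative spectrum of $V_i$ to that of the common two-dimensional algebra $W$ at $e = 3$; restricting inputs to $\{\chi_1, \chi_2\}$, the nonvanishing condition becomes ``each subtree-sum is nonzero modulo $3$'', which under the identification $1 \mapsto +1$, $2 \mapsto -1$ matches the nonvanishing-sign condition for $\ominus$ analyzed in the author--Mickey--Xu paper, yielding $C_{\star, m} = C_{\ominus, m}$. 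For the remaining cases ($e \ge 4$ with any $i \ge 1$, or $e = 3$ with $1 < i < n$), I would prove total nonassociativity by showing that every pair of distinct binary trees $P \ne Q$ on $m+1$ leaves admits inputs $u_0, \ldots, u_m \in X_i$ distinguishing them: pick a contiguous leaf-range serving as an internal node of one tree but not the other, and exploit the extra freedom from $e \ge 4$ (more nonzero residues per coordinate) or from $1 < i < n$ (room to distribute nonzero entries across $n$ positions) to choose the $u_k$ so that exactly this subtree-sum has weight $\ne i$ while every other subtree-sum of $P$ and $Q$ still has weight $i$ and the total sum also has weight $i$.

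The main obstacle lies in this last construction: producing a uniform recipe that separates an \emph{arbitrary} pair of distinct parenthesizations requires carefully sidestepping all the other subtree-sum constraints imposed simultaneously by $P$ and $Q$, and the hypothesis $e \ge 4$ or $1 < i < n$ enters precisely to guarantee enough degrees of freedom to isolate a single ``bad'' subtree-sum without collateral effects elsewhere.
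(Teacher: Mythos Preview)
Stages 1 and 2 are essentially the paper's arguments. Your idempotent analysis for $\Aut(W)\cong\SS_e$ takes a slightly different route (spectrum of $M_f$ and indecomposability of $W$) from the paper's explicit classification of all idempotents by their support and its study of which sums $c(\eta+\eta')$ remain idempotent, but both lead to the same conclusion and neither is clearly simpler.

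The real divergence is in Stage~3, and your proposed strategy for total nonassociativity is more ambitious than necessary and likely unworkable as stated. You aim to choose inputs so that \emph{exactly one} internal subtree-sum of the distinguished tree falls outside $X_i$ while \emph{every other} subtree-sum of both $P$ and $Q$ stays in $X_i$; controlling all those constraints simultaneously across two arbitrary trees is the obstacle you yourself flag, and it is not clear the available freedom suffices. The paper sidesteps this entirely. It proceeds by induction on $m$: take the leftmost deepest leaf $j$ of $s$ (so $j,j+1$ are siblings in $s$), and case-split on whether $j,j+1$ are siblings in $t$, whether both are left leaves in $t$, or whether $j$ is a right leaf in $t$. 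In the non-sibling cases the trick is simply to force $z_j\star z_{j+1}=0$, which kills $(z_0\star\cdots\star z_m)_s$ outright, while auxiliary ``filling'' lemmas (for any subtree $r$, any target $\chi_{cu}$ with $c\in\{2,\ldots,e-1\}$ when $e\ge4$, or $\chi_u,\chi_{2u}$ when $e=3$, and any prescribed leaf value $z_j$) guarantee that the $t$-side can be steered to a nonzero character. This needs no global control over all subtree-sums of $t$: the filling lemmas are local and inductive, and the only delicate point for $e=3$, $1<i<n$ is exhibiting three specific $u,v,w\in X_i$ with $u+v\notin X_i$ but $u+w,\,v+w\in X_i$, which the hypothesis $1<i<n$ makes possible. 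Reworking your Stage~3 along these lines removes the obstacle.
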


The nonassociativity measurement in Theorem~\ref{thm:Hamming} is very similar to what we obtained in previous work~\cite{Huang20} on the Norton product on the eigenspace $V_1$ of the Johnson graphs, Grassmann graphs and dual polar graphs based on the formulas by Levstein, Maldonado and Penazzi~\cite{DualPolarGraph,NortonAlgebra}.
The results on the automorphism group and nonassociativity in Theorem~\ref{thm:Hamming} do not include the case $e=2$ as it is somewhat different from the case $e\ge3$.

In fact, the Hamming graph $H(n,2)$ is the well-known \emph{hypercube} $Q_n$, and the linear characters of its vertex set $X=\ZZ_2^n$ are all real (actually over $\{0,1,-1\}$).
Furthermore, as the hypercube $Q_n$ is both bipartite and antipodal, it can be halved and folded~\cite[\S9.2.D]{DistReg1}.
The \emph{halved cube} or \emph{half-cube} $\frac12 Q_n$ can be obtained from the hypercube $Q_n$ by selecting vertices with an even number of ones and drawing edges between pairs of vertices differing in exactly two positions.
The \emph{folded cube} $\square_n$ can be obtained from the hypercube graph $Q_n$ by identifying each pair of vertices at distance $n$ from each other.
Applying both constructions above to the hypercube $Q_n$ gives the \emph{folded half-cube} $\frac12 \square_n$.
These graphs are all distance regular and their eigenvalues are known~\cite[\S9.2.D]{DistReg1}.

Since these graph are also Cayley graphs of finite abelian groups, we can study their Norton algebras using the same method as for the Hamming graphs, with the linear characters naturally indexed by certain sets and the Norton product of two linear characters $\chi_S$ and $\chi_T$ determined by the \emph{symmetric difference} $S\triangle T:=(S-T)\cup(T-S)$ of the indexing sets $S$ and $T$.
The automorphism groups of their Norton algebras are related to the \emph{hyperoctahedral group} $\SS_n^B \cong \ZZ_2\wr\SS_n$ (the Coxeter group of type $B_n$) consisting of all bijections $f$ on the set $\{\pm1,\ldots,\pm n\}$ satisfying $f(-j)=-f(j)$ for all $j\in[n]$, its center $\{\pm1\}$ consisting of the constant functions $f=\pm1$, and its subgroup $\SS_n^D$ (the Coxeter group of type $D_n$) consisting of all $f\in\SS_n^B$ with $|\{j\in[n]: f(j)<0\}|$ even.
Our results are summarized below, where $V_i(\Gamma)$ denotes the $i$th eigenspace and the corresponding Norton algebra of a distance regular graph $\Gamma$.

\begin{theorem}\label{thm:hypercube}
For $i=0,1,\ldots,n$, the (real) Norton algebra $V_i(Q_n)$ satisfies the following.
\begin{itemize}
\item
It has a basis $\{\chi_S: S\subseteq[n],\ |S|=i\}$ such that for all $S,T\subseteq[n]$ with $|S|=|T|=i$,
\[ \chi_S \star \chi_T =
\begin{cases}
\chi_{S\triangle T} & \text{if } |S\triangle T|=i \\
0 & \text{otherwise}.
\end{cases} \]
\item
Its automorphism group is trivial if $i=0$, 
equals the general linear group of the underlying vector space if $i>\lfloor 2n/3 \rfloor$ or $i$ is odd,
and admits $\SS_n^B/\{\pm1\}$ as a subgroup if $1\le i<n$ is even.
\item
Its product is associative if $i=0$, $i>\lfloor 2n/3 \rfloor$ or $i$ is odd, but totally nonassociative otherwise.
\end{itemize}
For $i=0,1,\ldots,\lfloor n/2 \rfloor$, there is an algebra isomorphism $V_i(\square_n)\cong V_{2i}(Q_n)$.
\end{theorem}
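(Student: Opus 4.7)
The plan is to leverage Theorem~\ref{thm:Hamming} for the first bullet and then fill in the $e=2$-specific analysis. The first bullet is immediate: under the bijection $u \in \ZZ_2^n \leftrightarrow \{j : u_j = 1\} \subseteq [n]$, addition in $\ZZ_2^n$ becomes symmetric difference, so the first bullet of Theorem~\ref{thm:Hamming} translates directly. The key structural observation for the rest is that $\chi_S \star \chi_T \ne 0$ requires $|S \triangle T| = i$; combined with $|S| = |T| = i$, this forces $|S \cap T| = i/2$ (so $i$ must be even) and $|S \cup T| = 3i/2 \le n$ (so $i \le \lfloor 2n/3 \rfloor$). Consequently, whenever $i$ is odd or $i > \lfloor 2n/3 \rfloor$ the Norton product vanishes identically, making $\star$ trivially associative and $\Aut(V_i) = \GL(V_i)$. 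The case $i = 0$ is the one-dimensional idempotent algebra $\RR\chi_\emptyset$, for which only $\id$ is an automorphism.

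For $1 \le i < n$ with $i$ even, I would construct the $\SS_n^B \cong \ZZ_2^n \rtimes \SS_n$ action on $V_i(Q_n)$ by
\[
(\epsilon,\sigma) \cdot \chi_S := (-1)^{|\sigma(S) \cap \epsilon|} \chi_{\sigma(S)},
\]
identifying $\epsilon \in \ZZ_2^n$ with a subset of $[n]$. The parity identity $|\sigma(S \triangle T) \cap \epsilon| \equiv |\sigma(S) \cap \epsilon| + |\sigma(T) \cap \epsilon| \pmod{2}$ makes this an algebra automorphism, and the wreath product composition law makes it a group homomorphism. An element $(\epsilon,\sigma)$ lies in the kernel iff $\sigma$ fixes every $i$-subset (forcing $\sigma = \id$ since $1 \le i \le n-1$) and $(-1)^{|S\cap\epsilon|} = 1$ for all $|S| = i$; a single-element swap argument shows the only such $\epsilon$ are $\mathbf{0}$ and $[n]$, giving kernel $\{\pm 1\}$ and the embedding $\SS_n^B/\{\pm 1\} \hookrightarrow \Aut(V_i)$.

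The main obstacle is the total nonassociativity of $\star$ when $i$ is even and $2 \le i \le \lfloor 2n/3 \rfloor$. For each $m \ge 2$ and each pair of distinct parenthesizations $P_1, P_2$ of $z_0 \star \cdots \star z_m$, I need to exhibit $(z_0,\ldots,z_m) \in V_i^{m+1}$ on which $P_1$ and $P_2$ differ. My plan is to adapt the witness strategy from the nonassociativity part of Theorem~\ref{thm:Hamming}: pick an interval $I \subseteq \{0,\ldots,m\}$ that corresponds to an internal subtree of $P_1$ but not of $P_2$, then choose basis-vector inputs $z_j = \chi_{S_j}$ so that the cumulative symmetric difference over $I$ leaves $X_i$ (forcing $P_1$ to vanish) while every subtree of $P_2$ has cumulative symmetric difference inside $X_i$ (keeping $P_2$ nonzero). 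The hard part is the combinatorial bookkeeping required to make these simultaneous conditions satisfiable for every pair $(P_1, P_2)$ and every $m$; here the room $3i/2 \le n$ coming from the hypothesis $i \le \lfloor 2n/3 \rfloor$ is crucial for arranging cascades of symmetric differences.

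Finally, for the isomorphism $V_i(\square_n) \cong V_{2i}(Q_n)$, identify $\square_n$ with the Cayley graph of $\ZZ_2^n/\langle \mathbf{1}\rangle$ on the images of the standard basis vectors. Its linear characters are exactly the $\chi_S$ trivial on $\mathbf{1}$, namely those with $|S|$ even; pulled back to $\ZZ_2^n$, they form the $\mathbf{1}$-invariant subspace of $\RR^{Q_n}$. A direct Cayley-graph calculation gives each such $\chi_S$ the eigenvalue $n - 2|S|$ in $\square_n$, so the $k$-th eigenspace of $\square_n$ is spanned by $\{\chi_S : |S| = 2k\}$, matching $V_{2k}(Q_n)$ as a vector space. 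Since entry-wise multiplication preserves $\mathbf{1}$-invariance and eigenspace projection commutes with the inclusion $\RR^{\square_n} \hookrightarrow \RR^{Q_n}$, the Norton products on $V_k(\square_n)$ and $V_{2k}(Q_n)$ coincide.
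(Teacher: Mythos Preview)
Your treatment of the basis and product formula, the automorphism group, and the folded-cube isomorphism matches the paper's own arguments (Corollary~\ref{cor:hypercube}, Proposition~\ref{prop:AutV0Hn} with Corollary~\ref{cor:AutH2}, and Theorem~\ref{thm:folded-cube}); your quotient model $\ZZ_2^n/\langle\mathbf{1}\rangle$ for $\square_n$ is isomorphic to the paper's model on $\ZZ_2^{n-1}\times\{0\}$ and yields the same eigenvalue computation.

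The genuine gap is in the total-nonassociativity argument. Your plan---choose an interval $I$ that is a subtree of $P_1$ but not of $P_2$, then engineer the $S_j$ so that the symmetric difference over $I$ leaves $X_i$ while every subtree of $P_2$ stays in $X_i$---pits one ``bad'' constraint against an unbounded number of ``good'' ones (one per internal node of $P_2$), and you give no mechanism for satisfying them all simultaneously. The missing idea, which the paper supplies, is to restrict every input to a single fixed triple $\{\chi_S,\chi_T,\chi_{S\triangle T}\}$ with $|S|=|T|=|S\triangle T|=i$; such $S,T$ exist by Lemma~\ref{lem:Delta} precisely when $i$ is even and $i\le\lfloor 2n/3\rfloor$. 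This triple is closed under $\star$ except that each element squares to zero, which collapses your bookkeeping to something tractable: Lemma~\ref{lem:tree} shows that for any tree, any target $R\in\{S,T,S\triangle T\}$, and any prescribed value $R'\ne R$ at one leaf, the remaining leaves can be filled from the triple so that the tree evaluates to $\chi_R$. Theorem~\ref{thm:NonAssocQn} then specializes your interval to $I=\{j,j+1\}$ where $j$ is the leftmost deepest leaf of $s$, sets $z_j=z_{j+1}=\chi_S$ to kill $s$, and invokes the lemma through a three-case analysis on the position of $j,j+1$ in $t$ to keep $t$ nonzero. Without isolating this closed triple, the ``combinatorial bookkeeping'' you flag as the hard part has no evident resolution.
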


\begin{theorem}\label{thm:halved-cube}
For $i=0,1,\ldots,\lfloor n/2 \rfloor$, the (real) Norton algebra $V_i(\frac12 Q_n)$ satisfies the following.
\begin{itemize}
\item
It has a basis $\{\chi_S: S\subseteq[n],\ |S|=i\}$ if $0\le i<n/2$ or $\{\chi_S: S\subseteq[n],\ |S|=i,\ 1\in S\}$ if $i=n/2$ such that for all $S,T\subseteq[n]$ with $|S|=|T|=i$ we have
\[ \chi_S \star \chi_T =
\begin{cases}
\chi_{S\triangle T} & \text{if } |S\triangle T|=i \\
\chi_{(S\triangle T)^c} & \text{if } |S\triangle T|=n-i \\
0 & \text{otherwise.} 
\end{cases} \]
Hence $V_i(\frac12 Q_n)\cong V_i(Q_n)$ if $i< \lceil n/3 \rceil$ or $n-i$ is odd, and $V_2(\frac12 Q_4) \cong V_2(Q_3) \cong V_1(\frac12 Q_3)$.
\item
Its automorphism group admits a subgroup isomorphic to $\SS_n^D$ if $i$ and $n$ are not both even and $(i,n)\ne(1,2)$, or $\SS_n^D/\{\pm1\}$ if $i$ and $n$ are both even and $(i,n)\ne(2,4)$.
\item
Its product $\star$ is associative if $i=0$, $i< \lceil n/3 \rceil$ is odd, $i$ and $n-i$ are both odd, or $n-i$ is odd and $i>\lfloor 2n/3\rfloor$, but totally nonassociative otherwise.
\end{itemize}
For $i=0,1,\ldots,d=\lfloor n/4 \rfloor$, there is an algebra isomorphism $V_i(\frac12 \square_n)\cong V_{2i}(\frac12 Q_n)$.
\end{theorem}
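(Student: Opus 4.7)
The plan is to realize $\tfrac12 Q_n$ as the Cayley graph of the abelian group $G=\{S\subseteq[n] : |S|\text{ even}\}\cong \ZZ_2^{n-1}$ with connection set $\{T:|T|=2\}$, and then apply the linear-character method used for the Hamming graphs. The characters of $G$ are the restrictions $\chi_S:T\mapsto (-1)^{|S\cap T|}$ of the characters of $\ZZ_2^n$; since $|T|$ is even, $\chi_S=\chi_{S^c}$ on $G$. A direct computation shows that the eigenvalue of $\chi_S$ on $\tfrac12 Q_n$ depends only on $(n-2|S|)^2$, so the characters in a single eigenspace $V_i$ are exactly those with $|S|\in\{i,n-i\}$. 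Choosing $|S|\le n/2$ (with the extra rule $1\in S$ when $|S|=n/2$) gives the stated basis, and the Norton product formula follows because the entry-wise product $\chi_S\chi_T=\chi_{S\triangle T}$ projects to $V_i$ exactly when $|S\triangle T|\in\{i,n-i\}$, with the appropriate basis representative being $\chi_{S\triangle T}$ or $\chi_{(S\triangle T)^c}$.

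The isomorphism $V_i(\tfrac12 Q_n)\cong V_i(Q_n)$ under the stated hypotheses is immediate from comparing product formulas: the parity of $|S\triangle T|=2i-2|S\cap T|$ rules out $|S\triangle T|=n-i$ when $n-i$ is odd, and the inequality $|S\triangle T|\le 2i$ rules it out when $3i<n$, so in either case the second branch of the product never fires. The exceptional isomorphism $V_2(\tfrac12 Q_4)\cong V_2(Q_3)\cong V_1(\tfrac12 Q_3)$ is verified by direct inspection of the three-dimensional algebras.

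For the automorphism subgroup, I realize $\SS_n^D=\SS_n\ltimes\{E\subseteq[n]:|E|\text{ even}\}$ and let it act on $V_i(\tfrac12 Q_n)$ via $(\sigma,E)\cdot\chi_S=(-1)^{|S\cap E|}\chi_{\sigma(S)}$. The evenness of $|E|$ is exactly what makes the sign well-defined modulo $S\leftrightarrow S^c$, and the algebra-homomorphism property reduces to $(-1)^{|S\cap E|}(-1)^{|T\cap E|}=(-1)^{|(S\triangle T)\cap E|}$, which is immediate. Restricted to elements $(\id,E)$, the kernel forces $E=\emptyset$ unless $i$ and $n$ are both even, in which case $E=[n]$ (the central $-1$ of $\SS_n^D$) also acts trivially, producing the promised quotient by $\{\pm1\}$. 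The small cases $(i,n)=(1,2)$ and $(2,4)$, where nontrivial permutations act trivially on the convention-restricted basis, are excluded from the statement.

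For associativity, every evaluation of a tree on basis vectors is either $0$ or $\chi_{S_0\triangle\cdots\triangle S_m}$, so $\star$ is associative iff every two-fold product $\chi_S\star\chi_T$ vanishes, and a parity check of $|S\triangle T|$ (always even) against the required sizes $i$ and $n-i$ produces exactly the four stated cases. The main obstacle is proving total nonassociativity in the remaining cases: for any two distinct parenthesizations $T_1\ne T_2$ of $z_0\star\cdots\star z_m$ one must exhibit an assignment on which they differ. My approach is induction on $m$, locating the lowest subtree at which $T_1$ and $T_2$ disagree and choosing the $S_j$ at its leaves so that one parenthesization has a vanishing intermediate product while the other does not; the subtle point is managing the two ways a product can be nonzero ($|S\triangle T|\in\{i,n-i\}$) simultaneously with the basis-representative convention, which means the naive construction used for the hypercube has to be adapted to avoid inadvertently landing in the complementary branch. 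Finally, the isomorphism $V_i(\tfrac12\square_n)\cong V_{2i}(\tfrac12 Q_n)$ comes from realizing $\tfrac12\square_n$ as the Cayley graph of the quotient of $G$ by the antipodal involution, whose dual consists precisely of the characters $\chi_S$ with $|S|$ divisible by four (equivalently, even $|S|/2$); matching these with the basis vectors of $V_{2i}(\tfrac12 Q_n)$ of size $2i$ transfers the product formula verbatim.
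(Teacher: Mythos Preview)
Your approach is essentially the same as the paper's: realize $\tfrac12 Q_n$ as a Cayley graph of the even-weight subgroup of $\ZZ_2^n$, identify the characters $\chi_S=\chi_{S^c}$, read off the Norton product, and then treat the automorphism subgroup and nonassociativity in parallel with the hypercube case. Two points deserve correction.

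First, your anticipated ``adaptation'' for total nonassociativity is unnecessary. The key observation you are missing is that the identity $\chi_{S\triangle T}=\chi_{(S\triangle T)^c}$ holds as \emph{elements} of $V_i(\tfrac12 Q_n)$, so whenever $\chi_S\star\chi_T\ne 0$ the triple $\{\chi_S,\chi_T,\chi_S\star\chi_T\}$ is closed under $\star$ in exactly the hypercube sense: any two multiply to the third, and each squares to zero (since $0\notin\{i,n-i\}$ for $1\le i\le n/2$). Concretely, if $|S\triangle T|=n-i$ and $U=(S\triangle T)^c$, then $S\triangle U=T^c$ has size $n-i$, so $\chi_S\star\chi_U=\chi_{(T^c)^c}=\chi_T$. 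Hence Lemma~5.6 and the three-case induction of Theorem~5.7 transfer verbatim; the paper simply says ``the rest of the proof goes in the same way.'' There is no complementary branch to manage once you work with the characters themselves rather than with chosen set-representatives.

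Second, your description of the dual of the antipodal quotient is off by a factor of two. The characters of $G/\{\varnothing,[n]\}$ are the $\chi_S$ with $\chi_S([n])=(-1)^{|S|}=1$, i.e.\ $|S|$ even, not $|S|$ divisible by four. With $|S|=2i$ (and the usual convention when $2i=n/2$) this is exactly the basis of $V_i(\tfrac12\square_n)$ described in the paper, and the product formula then matches that of $V_{2i}(\tfrac12 Q_n)$ on the nose.
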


Our method is also valid for the \emph{bilinear forms graph} $H_q(d,e)$, whose vertex set $X=\Mat_{d,e}(\FF_q)$ consists of all $d\times e$ matrices over a finite field $\FF_q$ and whose edge set $E$ consists of unordered pairs $xy$ of vertices $x,y\in X$ with $\rank(x-y)=1$.
This is a distance regular graph of diameter $d$ (assuming $d\le e$) and can be viewed as a $q$-analogue of the Hamming graph $H(d,e)$~\cite[\S9.5.A]{DistReg1}.  
If $d=1$ then $H_q(d,e)$ is a complete graph isomorphic to the Hamming graph $H(1,q^e)$.
In general, $H_q(d,e)$ is a Cayley graph of the finite abelian group $X$, so the linear character approach applies.

\begin{theorem}
For $i=0,1,\ldots,d\le e$, the (complex) Norton algebra $V_i(H_q(d,e))$ satisfies the following.
\begin{itemize}
\item
It has a basis $\{\chi_u: u\in X,\ \rank(u)=i\}$ such that 
\[ \chi_u\star\chi_v =
\begin{cases}
\chi_{u+v} & \text{if } \rank(u+v)=i \\
0 & \text{otherwise}.
\end{cases} \]
\item
Its automorphism group admits subgroups isomorphic to 
\[ \Mat_{d,e}(\FF_q)\rtimes\left(\GL_d(\FF_q) \times \GL_e(\FF_q) / \{(cI_d, cI_e): c\in\FF_q^\times\} \right).\]
\item
Its product is associative if $i=0$ or totally associative if $d\ge2$ and $i=1,\ldots,d$.
\end{itemize}
\end{theorem}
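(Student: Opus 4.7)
The plan is to follow the character-theoretic approach used for Theorem~\ref{thm:Hamming}, with the ``number of nonzero coordinates'' statistic replaced by matrix rank. Fix a nontrivial additive character $\psi : \FF_q \to \CC^\times$ and define $\chi_u(x) := \psi(\tr(u^\top x))$ for $u, x \in X = \Mat_{d,e}(\FF_q)$. Since $H_q(d,e)$ is the Cayley graph on the abelian group $X$ with connection set $C = \{c \in X : \rank(c) = 1\}$, each $\chi_u$ is an eigenvector with eigenvalue $\sum_{c\in C}\chi_u(c)$, and as $C$ is invariant under $x \mapsto gxh^{-1}$ for $(g,h) \in \GL_d(\FF_q) \times \GL_e(\FF_q)$, this eigenvalue depends only on $\rank(u)$. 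Hence $V_i = \vspan\{\chi_u : \rank(u) = i\}$, and since pointwise $\chi_u\chi_v = \chi_{u+v}$, its orthogonal projection onto $V_i$ equals $\chi_{u+v}$ when $\rank(u+v) = i$ and $0$ otherwise, establishing the first bullet.

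For the automorphism group, I would exhibit two families of algebra automorphisms whose combination realises the stated semidirect product. Translation by $m \in X$ induces the diagonal map $\chi_u \mapsto \chi_u(m)\chi_u$, which is multiplicative because $\chi_u(m)\chi_v(m) = \chi_{u+v}(m)$; these form a normal subgroup isomorphic to $(\Mat_{d,e}(\FF_q), +)$. The action $x \mapsto gxh^{-1}$ of $\GL_d(\FF_q)\times\GL_e(\FF_q)$ on $X$ dualises (via transpose) to an action on the index set $X$ that is additive and rank-preserving, so it induces an algebra automorphism of each $V_i$. A Schur-type argument applied to rank-one indices $a b^\top$ identifies the kernel of this dual action as $\{(cI_d, cI_e) : c \in \FF_q^\times\}$, and the two families then combine into the required semidirect product.

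For the third bullet, assuming the apparent typo ``totally associative'' really means totally nonassociative, the case $i = 0$ is immediate because $V_0$ is one-dimensional, while for $d \ge 2$ and $1 \le i \le d$ I would prove total nonassociativity by showing that distinct parenthesisations of $z_0 \star \cdots \star z_m$ define distinct multilinear maps on $V_i^{m+1}$. Evaluating a parenthesisation $P$ on $(\chi_{u_0},\ldots,\chi_{u_m})$ returns $\chi_{u_0+\cdots+u_m}$ exactly when every partial sum indexed by an internal node of the binary tree $T_P$ has rank $i$, and $0$ otherwise. Since distinct parenthesisations correspond to distinct laminar families of subintervals of $\{0,\ldots,m\}$, for any $T_P \ne T_{P'}$ one picks an interval $[a,b] \in T_P \setminus T_{P'}$ together with the unique $T_{P'}$-interval $[c,k]$ straddling the boundary of $[a,b]$, and then seeks rank-$i$ matrices $u_0,\ldots,u_m$ such that every $T_P$-subtree sum has rank $i$ while $\sum_{j=c}^{k}u_j$ has rank $\ne i$.

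The main obstacle will be this explicit construction, which requires engineering a large family of rank-$i$ matrices satisfying a prescribed laminar system of rank-$i$ equalities together with one rank-inequality. My plan is to take each $u_j$ to be supported on a coordinated $d \times i$ sub-block of $\Mat_{d,e}(\FF_q)$, exploiting $d \ge 2$ to produce the necessary interplay between rank-preserving and rank-altering sums, and to proceed by induction on $m$, splitting $T_P$ at its root to reduce the problem to two smaller parenthesisations acting on sub-blocks that share a common row. A base-case verification for $m = 2$ would confirm that three rank-$i$ matrices can be chosen whose pairwise and triple sums realise every target rank profile in $\{0, 1, \ldots, \min(3i, d)\}$, providing the arithmetic backbone of the induction and supplying the needed separating tuple for any pair of distinct parenthesisations.
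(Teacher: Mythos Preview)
Your treatment of the first two bullets matches the paper's approach closely. One small gap in the first: showing that $\sum_{c\in C}\chi_u(c)$ depends only on $\rank(u)$ via $\GL_d\times\GL_e$-invariance of $C$ does not by itself yield that \emph{different} ranks give \emph{different} eigenvalues, so $V_i=\vspan\{\chi_u:\rank(u)=i\}$ does not follow immediately. You still need either the explicit computation $\chi_u(X_1)=(q^{d+e-i}-q^d-q^e+1)/(q-1)$ (which the paper carries out directly) or the observation that $H_q(d,e)$ is distance-regular of diameter $d$ and hence has exactly $d+1$ distinct eigenvalues, forcing the $d+1$ rank classes to coincide with the eigenspaces.

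For the third bullet your laminar-interval strategy is plausible but substantially more work than needed, and the crucial construction step is left vague. The paper instead recycles the binary-tree inductions already proved for the Hamming graphs: for $q\ge4$ it reuses the proof of Theorem~\ref{thm:NonAssocH4} verbatim (since $cu\in X_i$ for every $c\in\FF_q^\times$); for $q=3$ it reuses Theorem~\ref{thm:NonAssocH3} after exhibiting $u,v,w\in X_i$ with $u+v\notin X_i$ but $u+w,v+w\in X_i$; and for $q=2$ it reuses Theorem~\ref{thm:NonAssocQn} after exhibiting $u,v\in X_i$ with $u+v\in X_i$. In each case only a fixed configuration of two or three matrices is required, not a family indexed by the leaves of an arbitrary tree. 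Your proposed induction ``splitting $T_P$ at its root'' is in fact headed toward the same tree induction the paper invokes, but you would be re-deriving those Hamming-graph theorems rather than citing them. The more direct laminar attack you sketch first (pick $[a,b]\in T_P\setminus T_{P'}$ and force one bad partial sum) is harder to execute, because you must simultaneously keep \emph{all} $T_P$-partial-sums at rank exactly $i$, and the ``coordinated $d\times i$ sub-block'' idea does not obviously supply enough degrees of freedom for that, particularly over $\FF_2$.
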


This paper is structured as follows.
In Section~\ref{sec:Cayley} we review the linear character eigenbasis for any Cayley graph of a finite abelian group and use this basis to establish a formula for the Norton product on each eigenspace. 
Then we carefully examine the Norton algebras of the Hamming graphs in Section~\ref{sec:Hamming}, investigate their automorphisms groups in Section~\ref{sec:Aut}, and measure their nonassociativity in Section~\ref{sec:NonAssoc}.
We extend our results to the halved and/or folded cubes in Section~\ref{sec:cubes} and to the bilinear forms graphs in Section~\ref{sec:Hq}.
We conclude this paper with some remarks and questions in Section~\ref{sec:questions}.


\section{Cayley graphs of finite abelian groups}\label{sec:Cayley}

In this section we study the Norton algebras of a Cayley graph of a finite abelian group using the linear characters of the group.
First recall that a \emph{distance regular graph} is a graph with distance $d(-,-)$ such that the number of vertices $z$ with $d(x,z)=i$ and $d(y,z)=j$ depends only on $i$, $j$, and $k=d(x,y)$, but not on the choices of the vertices $x$ and $y$.
Such a graph satisfies many nice properties and has been an important topic in algebraic combinatorics; see, for example, Brouwer--Cohen--A. Neumaier~\cite{DistReg1} and van Dam--Koolen--Tanaka~\cite{DistReg2}.

In particular, if $\Gamma$ is a distance regular graph of diameter $d$, then there are exactly $d+1$ (real) eigenvalues $\theta_0>\theta_1>\cdots>\theta_d$ and the corresponding (real) eigenspaces $V_0, V_1, \ldots, V_d$ form a direct sum decomposition of $\RR^X:=\{f: X\to \RR\} \cong \RR^{|X|}$.
Using the orthogonal projection $\pi_i: \RR^X\to V_i$ one can define the \emph{Norton product} $\star$ on each eigenspace $V_i$ by $\tau\star\tau':= \pi_i(\tau\cdot\tau')$ for all $\tau, \tau'\in V_i$, where $\tau\cdot\tau'$ is the entry-wise product, i.e., $(\tau\star\tau')(x):= \tau(x)\tau'(x)$ for all $x\in X$.
This gives a commutative but not necessarily associative algebra known as the \emph{Norton algebra}, which turns out to be useful in group theory~\cite{Norton1, Norton2}.

For some important families of distance regular graphs (the Johnson graphs, Grassmann graphs, dual polar graphs, and  hypercube graphs), Levstein, Maldonado and Penazzi~\cite{DualPolarGraph,NortonAlgebra} used a graded lattice to construct a filtration of vector spaces and obtained a formula for the Norton product on the eigenspace of the second largest eigenvalue. 
Based on this formula we~\cite{Huang20} studied the nonassociativity of the Norton product, with the result on the hypercube graphs extended to the Hamming graphs.
For the other eigenspaces of these graphs and for the eigenspaces of other distance regular graphs, the Norton algebra structure has not been determined yet.

Now we use linear characters to give a complete description of all Norton algebras of a distance regular graph if it is also a Cayley graph of a finite abelian group, with the ground field $\RR$ extended to the complex field $\CC$.
To this end, we review some basic properties of linear characters. 

Let $G$ be a group and let $R^\times$ denote the multiplicative group of all units in a ring $R$.
A \emph{linear character} of $G$ is a group homomorphism $\chi:G\to\CC^\times$.
The linear characters of $G$ form an abelian group $G^*$ under the \emph{entry-wise product} defined by 
\[ (\chi\cdot\chi')(g) := \chi(g)\chi'(g) \quad \text{for all $\chi,\chi'\in G^*$ and $g\in G$}. \] 
The following result is well known and we include a short proof here for completeness.

\begin{theorem}\label{thm:character}
The group $G^*$ of all linear characters of a finite abelian group $G$ is isomorphic to $G$ and is an orthonormal basis for the space $\CC^G:=\{\phi: G\to \CC\}\cong \CC^{|G|}$ endowed with the inner product
\[ \langle \phi,\psi \rangle := \frac{1}{|G|} \sum_{g\in G} \phi(g) \overline{\psi(g)}
\quad\text{ for all $\phi,\psi\in \CC^G$.} \]
\end{theorem}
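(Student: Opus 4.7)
The plan is to prove the two assertions (isomorphism $G^*\cong G$ and orthonormal basis) in succession, relying on the structure theorem for finite abelian groups and the standard ``character sum'' trick.

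First I would establish the isomorphism $G^*\cong G$. Because the construction $G\mapsto G^*$ converts direct products into direct products (a character of $G\times H$ is uniquely determined by its restrictions to $G$ and $H$, and conversely any pair $(\chi_G,\chi_H)$ yields a character $(g,h)\mapsto \chi_G(g)\chi_H(h)$), it suffices by the structure theorem for finite abelian groups to check the case $G=\ZZ_n$. For $\ZZ_n$, a character $\chi$ is determined by $\chi(1)$, which must satisfy $\chi(1)^n=1$, so $\chi(1)$ ranges over the $n$-th roots of unity; the assignment $k\mapsto \chi_k$ with $\chi_k(j)=e^{2\pi ijk/n}$ gives an explicit isomorphism $\ZZ_n\cong\ZZ_n^*$. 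Combining these steps yields $G\cong G^*$.

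Next I would establish orthogonality via the classical averaging argument. Given $\chi,\chi'\in G^*$, note that $\overline{\chi'}$ is again a character (since every $\chi'(g)$ is a root of unity), so $\psi:=\chi\cdot\overline{\chi'}\in G^*$, and
\[ \langle\chi,\chi'\rangle=\frac{1}{|G|}\sum_{g\in G}\psi(g). \]
If $\psi$ is the trivial character, the sum equals $|G|$, giving $\langle\chi,\chi\rangle=1$. Otherwise pick $h\in G$ with $\psi(h)\ne1$; the substitution $g\mapsto hg$ shows
\[ \sum_{g\in G}\psi(g)=\sum_{g\in G}\psi(hg)=\psi(h)\sum_{g\in G}\psi(g), \]
so $(1-\psi(h))\sum_{g}\psi(g)=0$, forcing the sum to vanish. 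Hence $\{\chi:\chi\in G^*\}$ is an orthonormal set.

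Finally, orthonormality implies linear independence, and since $|G^*|=|G|=\dim_{\CC}\CC^G$, the set $G^*$ is a basis of $\CC^G$. I do not anticipate a real obstacle here: the only point needing care is confirming that complex conjugation preserves the character property (which is where one uses that characters of finite groups take values on the unit circle), after which the averaging trick and the dimension count finish the argument cleanly.
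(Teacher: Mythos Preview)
Your proposal is correct and follows essentially the same approach as the paper: both reduce $G^*\cong G$ to the cyclic case via the multiplicativity of the dual under direct products, and both establish orthogonality by the translation trick $g\mapsto hg$ applied to $\sum_g \chi(g)\overline{\chi'(g)}$, followed by the dimension count $|G^*|=|G|=\dim\CC^G$. The only cosmetic difference is that you package $\chi\cdot\overline{\chi'}$ as a single character $\psi$ before shifting, whereas the paper shifts the quotient $\chi(g)\chi'(g)^{-1}$ directly.
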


\begin{proof}
For any $\chi\in G^*$ and any $g\in G$ with order $n$, we have $\chi(g)^n=1$ which implies $|\chi(g)|=1$ and $\overline{\chi(g)}=\chi(g)^{-1}$.
Thus a finite cyclic group $\ZZ_e$ has $e$ distinct linear characters given by $\chi_a(b):=\omega^{ab}$ for all $a,b\in \ZZ_e$, where $\omega:=\exp(2\pi i/e)$ is a primitive $e$th root of unity.
Moreover, if $G_1$ and $G_2$ are finite abelian groups then $\chi$ is a linear character of $G_1\times G_2$ if and only if $\chi=\chi_1\cdot\chi_2$ for some linear characters $\chi_1\in G_1^*$ and $\chi_2\in G_2^*$.
Therefore $G^* \cong G$ for any finite abelian group $G$; in particular, $|G^*|=|G|=\dim(\CC^G)$. 
If $\chi\in G^*$ then $\langle\chi,\chi\rangle =1$ since $\chi(g)\overline{\chi(g)} = |\chi(g)|=1$ for all $g\in G$.
For distinct $\chi, \chi'\in G^*$, we have $\chi(h)\ne\chi'(h)$ for some $h\in G$ and thus
\[ \langle \chi, \chi' \rangle = \frac{1}{|G|} \sum_{g\in G} \chi(g) \chi'(g)^{-1} 
=  \frac{1}{|G|} \sum_{g\in G} \chi(gh) \chi'(gh)^{-1}
= \chi(h)\chi'(h)^{-1} \langle \chi, \chi' \rangle \]
which implies that $\langle \chi, \chi' \rangle = 0$.
Therefore $G^*$ is an orthonormal basis for $\CC^G$.
\end{proof}

Let $G$ be a finite abelian group expressed additively, and let $S$ be a subset of $G-\{0\}$ such that $s\in S\Rightarrow -s\in S$.
The \emph{Cayley graph} $\Gamma=\Gamma(G,S)$ of $G$ with respect to $S$ has vertex set $X=G$ and edge set $E=\{ xy: y-x \in S\}$.
The eigenvalues and eigenvectors of $\Gamma$ are those of its \emph{adjacency matrix} $A=[a_{xy}]_{x,y\in X}$, where $a_{xy}$ is one if $xy\in E$ or zero otherwise. 
An \emph{eigenbasis} of $\Gamma$ is a basis for the vector space $\CC^X$ consisting of eigenvectors of $\Gamma$.
An explicit construction of such a basis is well known in the abelian case~\cite[Exercise~11.8]{Lovasz79};
for the nonabelian case, see, for example, Babai~\cite{Babai}, Brouwer--Haemers~\cite[Proposition~6.3.1]{Spectra}, and Lov\'asz~\cite{Lovasz75}.

\begin{theorem}\label{thm:Cayley}
For any Cayley graph $\Gamma=\Gamma(X,S)$ of a finite abelian group $X$, the linear characters of $X$ form an eigenbasis of $\Gamma$ with each linear character $\chi$ corresponding to the eigenvalue $\chi(S):=\sum_{s\in S}\chi(s)$.
\end{theorem}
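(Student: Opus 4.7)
The plan is to split the claim into its two parts and dispose of each in one step. The ``basis'' half is immediate from Theorem~\ref{thm:character}, which already asserts that the set $X^*$ of linear characters of a finite abelian group $X$ is an orthonormal basis of $\CC^X$; so the only real content is the eigenvalue computation.

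For that, I would fix $\chi\in X^*$ and compute $(A\chi)(x)$ for an arbitrary $x\in X$ directly from the definitions of the adjacency matrix $A=[a_{xy}]_{x,y\in X}$ and of the Cayley graph $\Gamma(X,S)$. The neighbors of $x$ are exactly the vertices $x+s$ with $s\in S$, so using the homomorphism identity $\chi(x+s)=\chi(x)\chi(s)$ one obtains
\[ (A\chi)(x)=\sum_{s\in S}\chi(x+s) = \chi(x)\sum_{s\in S}\chi(s)=\chi(S)\,\chi(x). \]
Since $x$ was arbitrary, this yields $A\chi=\chi(S)\,\chi$, establishing simultaneously that $\chi$ is an eigenvector and that its eigenvalue is $\chi(S)$.

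The main obstacle is essentially nonexistent: the argument is a one-line calculation once Theorem~\ref{thm:character} and the additive description of the Cayley edges are in hand. The only sanity check worth recording is that the closure assumption $s\in S\Rightarrow -s\in S$, combined with $\overline{\chi(s)}=\chi(-s)$ (which holds because $\chi(s)$ is a root of unity, as in the proof of Theorem~\ref{thm:character}), forces $\chi(S)\in\RR$, consistent with the fact that the real symmetric matrix $A$ must have real eigenvalues.
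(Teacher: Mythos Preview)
Your proposal is correct and essentially identical to the paper's proof: both invoke Theorem~\ref{thm:character} for the basis claim and then compute $(A\chi)(x)=\sum_{s\in S}\chi(x+s)=\chi(S)\chi(x)$ using the homomorphism property. Your added remark that $\chi(S)\in\RR$ via the symmetry $-S=S$ is a pleasant extra sanity check not present in the paper, but the core argument is the same.
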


\begin{proof}
By Theorem~\ref{thm:character}, the linear characters of $X$ form a basis for the space $\CC^X$.
Each linear character $\chi$ of $X$ is an eigenvector of $\Gamma$ corresponding to the eigenvalue $\chi(S)$ since
\[ (A \chi)(x) = \sum_{y-x\in S} \chi(y) = \sum_{s\in S} \chi(s) \chi(x) = \chi(S)\chi(x) \quad\text{for all } x\in X. \qedhere \]
\end{proof}

Since any Cayley graph $\Gamma(X,S)$ of a finite abelian group $X$ has an eigenbasis $X^*$ consisting of all linear characters of $X$, we can define the \emph{Norton product} $\chi\star\chi'$ of two linear characters $\chi$ and $\chi'$ in the same eigenspace of $\Gamma$ by projecting the entry-wise product $\chi\cdot\chi'$ back to this eigenspace.
We provide a formula for this product below.

\begin{theorem}\label{thm:Norton}
For any Cayley graph $\Gamma(X,S)$ of a finite abelian group $X$, if two linear characters $\chi$ and $\chi'$ of $X$ correspond to the same eigenvalue $\chi(S)=\chi'(S)$ then
\[ \chi\star \chi' = 
\begin{cases}
\chi\cdot\chi' & \text{if } (\chi\cdot\chi')(S) = \chi(S) \\
0, & \text{otherwise}.
\end{cases} \]
\end{theorem}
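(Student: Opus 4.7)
The plan is to exploit two facts already established in the excerpt: (i) by Theorem~\ref{thm:character}, the set $X^*$ of linear characters is an orthonormal basis of $\CC^X$ under the stated inner product, and (ii) by Theorem~\ref{thm:Cayley}, each linear character $\psi \in X^*$ is itself an eigenvector of $\Gamma(X,S)$ with eigenvalue $\psi(S)$. Together these say that the eigenspace decomposition of $\CC^X$ is simply the partition of $X^*$ into fibers of the map $\psi \mapsto \psi(S)$.

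First I would observe that $\chi \cdot \chi'$ is again a linear character of $X$, since $X^*$ is a group under the entry-wise product. Hence by Theorem~\ref{thm:Cayley}, $\chi\cdot\chi'$ already lies in the eigenspace $V$ of $\Gamma$ corresponding to the eigenvalue $(\chi\cdot\chi')(S)$. The key step is then to identify the orthogonal projection $\pi_V \colon \CC^X \to V$, where $V$ here denotes the common eigenspace containing $\chi$ and $\chi'$ (i.e., the one corresponding to the eigenvalue $\chi(S) = \chi'(S)$). Since $X^*$ is an orthonormal basis and $V$ is spanned by the subset $\{\psi \in X^* : \psi(S) = \chi(S)\}$, the projection of any $\phi \in \CC^X$ onto $V$ is
\[ \pi_V(\phi) = \sum_{\substack{\psi \in X^* \\ \psi(S) = \chi(S)}} \langle \phi, \psi \rangle \, \psi. \]

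Applying this to $\phi = \chi \cdot \chi'$ and using orthonormality, the only possibly nonzero coefficient is $\langle \chi\cdot\chi', \chi\cdot\chi' \rangle = 1$, and it contributes precisely when $\chi\cdot\chi'$ belongs to the index set, that is, when $(\chi\cdot\chi')(S) = \chi(S)$. Thus $\chi\star\chi' = \pi_V(\chi\cdot\chi') = \chi\cdot\chi'$ in that case, and $\chi\star\chi' = 0$ otherwise, yielding the claimed formula.

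There is no real obstacle: the argument is a one-line consequence of the combined content of Theorems~\ref{thm:character} and~\ref{thm:Cayley}, and the only mild subtlety is making explicit that the eigenspace $V$ containing $\chi$ and $\chi'$ is spanned by an orthonormal subset of $X^*$, so that projection reduces to reading off a single coordinate. The resulting proof should be only a few lines.
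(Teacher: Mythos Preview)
Your proposal is correct and follows essentially the same approach as the paper: both observe that $\chi\cdot\chi'$ is again a linear character, hence an eigenvector for the eigenvalue $(\chi\cdot\chi')(S)$, so the orthogonal projection onto the eigenspace containing $\chi$ and $\chi'$ either fixes it or kills it. Your version is slightly more explicit in writing out the projection formula via the orthonormal basis, whereas the paper simply asserts the dichotomy in one sentence.
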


\begin{proof}
Given linear characters $\chi$ and $\chi'$ of $X$ with $\chi(S)=\chi'(S)$, the entry-wise product $\chi\cdot\chi'$ is still a linear character of $X$ with the corresponding eigenvalue $(\chi\cdot\chi')(S)$.
The projection onto the eigenspace containing $\chi$ and $\chi'$ fixes $\chi\cdot\chi'$ if $(\chi\cdot\chi')(S)=\chi(S)$ or annihilates it  otherwise.
\end{proof}

In the remainder of this paper we elaborate the aforementioned linear character approach to the Norton algebras in the context of the Hamming graphs, halved/folded cubes, and bilinear forms graphs, as these graphs are simultaneously distance regular graphs and Cayley graphs of finite abelian groups.
Even though their eigenspaces can be realized over $\RR$, we allow extension of scalars to the complex field $\CC$ so that we can apply Theorem~\ref{thm:Cayley} and Theorem~\ref{thm:Norton} to obtain linear character bases and explicit formulas of the Norton product for all eigenspaces.
When there are multiple graphs involved, we let $V_i(\Gamma)$ denote the $i$th eigenspace of $\Gamma$ and the corresponding Norton algebra.
 
\section{Hamming graphs}\label{sec:Hamming}

In this section we study the Norton algebras of the Hamming graphs.
Given integers $n\ge1$ and $e\ge2$, the finite abelian group $X=\ZZ_e^n$ consists of all words of length $n$ on the alphabet $\ZZ_e=\{0,1,\ldots,e-1\}$ with addition performed entry-wise modulo $e$.
We can write an element $x\in X$ as either a word $x=x_1\cdots x_n$ or a function $x:[n]\to\ZZ_e$, where $[n]:=\{1,2,\ldots,n\}$.
If $x(1)=\cdots=x(n)=c$ for some constant $c\in\CC$ then we write $x=c$.
The \emph{support} of $x$ is $\supp(x):=\{j\in[n]: x(j)\ne0\}$.
Let $X_i$ denote the set of all $x\in X$ with $|\supp(x)|=i$.

The \emph{Hamming graph} $H(n,e)$ is the Cayley graph $\Gamma(\ZZ_e^n,X_1)$, which is a distance regular graph of diameter $d=n$.
For $i=0,1,\ldots,n$, the $i$th eigenvalue of $H(n,e)$ and its multiplicity are~\cite[\S9.2]{DistReg1}
\[ \theta_i=(n-i)e-n \qand \dim(V_i)=\binom{n}{i}(e-1)^i.\]

Maldonado and Penazzi~\cite{NortonAlgebra} showed that the Norton product on the eigenspace $V_1$ of the Hamming graph $H(n,2)$ is constantly zero. 
For $e\ge3$, we showed in previous work~\cite{Huang20} that the Norton algebra $V_1(H(1,e))$ has a spanning set $\{\bar v_1,\ldots,\bar v_{e}\}$ such that 
\begin{equation}\label{eq:V1}
\bar v_i\star \bar v_i= \bar v_i \qand \bar v_i\star \bar v_j = -(\bar v_i+\bar v_j)/(e-2), \quad 1\le i\ne j\le e 
\end{equation}
and the direct product of $n$ copies this algebra is isomorphic to the Norton algebra $V_1(H(n,e))$.
Now using linear characters we can determine all Norton algebras of $H(n,e)$.

\subsection{Basis and Norton product}
For every $u\in X$ we define a linear character $\chi_u$ of $X$ by
\begin{equation}\label{eq:chi}
\chi_u(x):= \prod_{j\in[n]} \omega^{u(j)x(j)} = \omega^{\sum_{j\in[n]} u(j)x(j)} 
\quad \text{for all } x \in X.
\end{equation}
Here $\omega:=\exp(2\pi i/e)\in\CC$ is a primitive $e$th root of unity, which satisfies the following: 
\begin{equation}\label{eq:root}
1+\omega^j+\omega^{2j}+\cdots+\omega^{(e-1)j} = 
\begin{cases}
e & \text{if } j=0, \\
0 & \text{if } j=1,\ldots,e-1.
\end{cases} \
\end{equation}
The proof of the above identity is an easy exercise.
We are ready to provide our first main result on the Hamming graphs.

\begin{theorem}\label{thm:NortonHamming}
For $i=0,1,\ldots,n$, the (complex) eigenspace $V_i$ of the Hamming graph $H(n,e)$ has a basis $\left\{\chi_u : u\in X_i \right\}$
such that for all $u,v\in X_i$,
\[ \chi_u \star \chi_v =
\begin{cases}
\chi_{u+v} & \text{if } u+v\in X_i \\
0 & \text{otherwise}.
\end{cases} \]
\end{theorem}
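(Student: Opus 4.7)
The plan is to apply Theorem~\ref{thm:Cayley} and Theorem~\ref{thm:Norton} directly, with the only substantive work being a computation of the eigenvalue $\chi_u(X_1)$ for $u\in X$.

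First I would compute, for any $u\in X$, the eigenvalue
\[ \chi_u(X_1) = \sum_{j\in[n]} \sum_{c=1}^{e-1} \omega^{u(j)c}, \]
by decomposing $X_1$ according to the position of the unique nonzero entry. Using identity~\eqref{eq:root}, the inner sum equals $e-1$ when $u(j)=0$ and $-1$ when $u(j)\ne 0$. Hence if $|\supp(u)|=i$, then
\[ \chi_u(X_1) = (n-i)(e-1) + i(-1) = (n-i)e-n = \theta_i. \]
By Theorem~\ref{thm:Cayley} each $\chi_u$ with $u\in X_i$ lies in $V_i$. Since $|X_i|=\binom{n}{i}(e-1)^i=\dim(V_i)$ and the set of all linear characters of $X$ is linearly independent (being an orthonormal basis of $\CC^X$ by Theorem~\ref{thm:character}), the subset $\{\chi_u:u\in X_i\}$ is a basis of $V_i$.

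For the Norton product, I would observe from the definition~\eqref{eq:chi} that $\chi_u\cdot\chi_v=\chi_{u+v}$ since the exponents add. By Theorem~\ref{thm:Norton}, for $u,v\in X_i$ the product $\chi_u\star\chi_v$ equals $\chi_{u+v}$ if $\chi_{u+v}(X_1)=\theta_i$ and is zero otherwise. Since the eigenvalues $\theta_0>\theta_1>\cdots>\theta_n$ are distinct and $\chi_{u+v}(X_1)=\theta_{|\supp(u+v)|}$ by the computation above, the equality $\chi_{u+v}(X_1)=\theta_i$ is equivalent to $u+v\in X_i$. This yields the claimed formula.

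There is no serious obstacle here: the argument is a bookkeeping exercise once the eigenvalue computation is in hand, and the only mild subtlety is to note that because the eigenvalues $\theta_i$ are strictly monotone in $i$, the support size completely determines the eigenspace a character belongs to, which is what converts the condition $(\chi_u\cdot\chi_v)(X_1)=\chi_u(X_1)$ from Theorem~\ref{thm:Norton} into the clean combinatorial condition $u+v\in X_i$.
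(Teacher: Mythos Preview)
Your proof is correct and follows essentially the same route as the paper: compute $\chi_u(X_1)=(n-i)(e-1)-i=\theta_i$ via the root-of-unity identity~\eqref{eq:root}, conclude that $\{\chi_u:u\in X_i\}$ is a basis of $V_i$, and then use $\chi_u\cdot\chi_v=\chi_{u+v}$ together with the distinctness of the $\theta_i$ to read off the Norton product. The only cosmetic difference is that you invoke Theorem~\ref{thm:Norton} and the dimension count $|X_i|=\dim V_i$ explicitly, whereas the paper phrases the same step as a direct projection argument.
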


\begin{proof}
By Theorem~\ref{thm:character} and its proof, $\{\chi_u:u\in X\}$ is a complete set of distinct linear characters of the abelian group $X=\ZZ_e^n$ and thus a basis for $\CC^X$.
For each $u\in X$, the linear character $\chi_u$ is an eigenvector of $H(n,e)$ corresponding to the eigenvalue $\chi_u(X_1)$ by Theorem~\ref{thm:Cayley}.

To compute the eigenvalue $\chi_u(X_1)$, suppose $u\in X_i$ and let $x\in X_1$.
Then we have $x(j)\ne0$ for a unique $j\in[n]$ and thus $\chi_u(x) = \omega^{u(j)x(j)}$.
If $j\notin\supp(u)$ then $u(j)=0$ and $\chi_u(x) =1$.
If $j\in \supp(u)$ then $\omega^{u(j)}$ is a nontrivial $e$th root of unity and the sum of $\chi_u(x) = \omega^{u(j)x(j)}$ as $x(j)$ runs through $\{1,2,\ldots,e-1\}$ with $j$ fixed is $-1$, thanks to Equation~\eqref{eq:root}.
Thus
\[ \chi_u(X_1) = \sum_{j\in[n]} \sum_{k\in\ZZ_e-\{0\}} \omega^{u(j)k}
= (n-i)(e-1)+i(-1)=\theta_i.
\]
This implies that $\{\chi_u: u\in X_i\}$ is a basis for $V_i$.
For any $u,v\in X_i$ we have
\[ (\chi_u \cdot \chi_v )(x) = \prod_{j\in[n]} \omega^{(u(j)+v(j))x(j)} \quad \text{for all } x\in X.\]
Thus $\chi_u\cdot \chi_v = \chi_{u+v} \in V_j$ where $j=|\supp(u+v)|$.
The projection of $\chi_{u+v}$ to $V_i$ is either itself if $j=i$ or zero otherwise.
\end{proof}

\begin{example}\label{example:H23}
The complex eigenbasis for the Hamming graph $H(2,3)$ given in Theorem~\ref{thm:NortonHamming} consists of the rows of the matrix below, which are indexed by the vertices (written as words).
\begin{center}
\begin{tabular}{C|CCCCCCCCC}
X & 00 & 10 & 01 & 20 & 11 & 02 & 21 & 12 & 22  \\
\hline
\chi_{00} & 1 & 1 & 1 & 1 & 1 & 1 & 1 & 1 & 1\\
\chi_{10} & 1 & \omega & 1 & \omega^2 & \omega & 1 & \omega^2 &\omega & \omega^2 \\
\chi_{20} & 1 & \omega^2 & 1 & \omega & \omega^2 & 1 & \omega & \omega^2 & \omega \\
\chi_{01} & 1 & 1 & \omega & 1 & \omega & \omega^2 & \omega & \omega^2 & \omega^2 \\
\chi_{02} & 1 & 1 & \omega^2 & 1 & \omega^2 & \omega & \omega^2 &\omega &\omega \\
\chi_{11} & 1 & \omega &\omega &\omega^2 & \omega^2 & \omega^2 & 1 & 1 & \omega \\
\chi_{21} & 1 & \omega^2 & \omega & \omega & 1 & \omega^2 & \omega^2 & \omega & 1 \\
\chi_{12} & 1 & \omega & \omega^2 & \omega^2 & 1 & \omega &\omega &\omega^2 & 1 \\
\chi_{22} & 1 & \omega^2 & \omega^2 & \omega &\omega &\omega &1& 1& \omega^2  
\end{tabular}
\end{center}
The first row spans $V_0$ with $\chi_{00}\star\chi_{00} = \chi_{00}$, the next four rows span $V_1$, and the last four rows span $V_2$.
The Norton algebras $V_1$ and $V_2$ are isomorphic to each other by the following charts (a coincidence between Corollary~\ref{cor:V1Hn} and Proposition~\ref{prop:VnHn}).
\begin{center}
\begin{tabular}{c|cccc}
$\star$ & $\chi_{01}$ & $\chi_{02}$ & $\chi_{10}$ & $\chi_{20}$ \\
\hline
$\chi_{01}$ & $\chi_{02}$ & $0$ & $0$ & $0$ \\
$\chi_{02}$ & $0$ & $\chi_{01}$ & $0$ & $0$ \\
$\chi_{10}$ & $0$ & $0$ & $\chi_{20}$ & $0$ \\
$\chi_{20}$ & $0$ & $0$ & $0$ & $\chi_{10}$
\end{tabular} \qquad
\begin{tabular}{c|cccc}
$\star$ & $\chi_{11}$ & $\chi_{12}$ & $\chi_{21}$ & $\chi_{22}$ \\
\hline
$\chi_{11}$ & $\chi_{22}$ & $0$ & $0$ & $0$ \\
$\chi_{12}$ & $0$ & $\chi_{21}$ & $0$ & $0$ \\
$\chi_{21}$ & $0$ & $0$ & $\chi_{12}$ & $0$ \\
$\chi_{22}$ & $0$ & $0$ & $0$ & $\chi_{11}$
\end{tabular} 
\end{center}
\end{example}

\begin{remark}
Although the basis given in Theorem~\ref{thm:NortonHamming} consists of complex vectors, we can obtain a real basis by taking real and imaginary parts of the vectors. 
Let $X_i^0:= \{u\in X_i: 2u=0\}$ and 
let $X_i^+$ and $X_i^-$ be the sets of all $u\in X_i-X_i^0$ with $u(j)>e/2$ or $u(j)<e/2$, respectively, where $j$ is the smallest integer such that $u(j)\ne e/2$.
For each $u\in X$, define $\xi_u: X\to \RR$ by 
\[ \xi_u(x) :=
\begin{cases}
\cos\left( \sum_{j\in[n]} \frac{2\pi u(j)x(j)}e \right) & \text{if } u\in X_i^+ \cup X_i^0 \\
\sin\left( \sum_{j\in[n]} \frac{2\pi u(j)x(j)}e \right) & \text{if } u\in X_i^-
\end{cases} \]
for all $x\in X$.
One sees that for $i=0,1,\ldots,d=n$, the eigenspace $V_i$ of $H(n,e)$ has a real basis
$\{ \xi_u: u\in X_i\}$.
Below is an example for the Hamming graph $H(2,3)$, which can be further normalized to an eigenbasis over $\ZZ$.
\begin{center}
\begin{tabular}{C|CCCCCCCCC}
X & 00 & 10 & 01 & 20 & 11 & 02 & 21 & 12 & 22  \\
\hline
\chi_{00} & 1 & 1 & 1 & 1 & 1 & 1 & 1 & 1 & 1\\
\chi_{10} & 0 & \sqrt{3}/2 & 0 & -\sqrt{3}/2 & \sqrt{3}/2 & 0 & -\sqrt{3}/2 &\sqrt{3}/2 & -\sqrt{3}/2 \\
\chi_{20} & 1 & -1/2 & 1 & -1/2 & -1/2 & 1 & -1/2 & -1/2 & -1/2 \\
\chi_{01} & 0 & 0 & \sqrt{3}/2 & 0 & \sqrt{3}/2 & -\sqrt{3}/2 & \sqrt{3}/2 & -\sqrt{3}/2 & -\sqrt{3}/2 \\
\chi_{02} & 1 & 1 & -1/2 & 1 & -1/2 & -1/2 & -1/2 &-1/2 &-1/2 \\
\chi_{11} & 0 & \sqrt{3}/2 &\sqrt{3}/2 &-\sqrt{3}/2 & -\sqrt{3}/2 & -\sqrt{3}/2 & 0 & 0 & \sqrt{3}/2 \\
\chi_{21} & 1 & -1/2 & -1/2 & -1/2 & 1 & -1/2 & -1/2 & -1/2 & 1 \\
\chi_{12} & 0 & \sqrt{3}/2 & -\sqrt{3}/2 & -\sqrt{3}/2 & 0 & \sqrt{3}/2 &\sqrt{3}/2 &-\sqrt{3}/2 & 0 \\
\chi_{22} & 1 & -1/2 & -1/2 & -1/2 &-1/2 &-1/2 &1& 1& -1/2  
\end{tabular}
\end{center}
Recently, there has been work on the minimum cardinality of the support of eigenfunctions of a Hamming graph; see for example, Valyuzhenich and Vorob'ev~\cite{Supp}.
The linear characters may provide another possible approach to such problems.
\end{remark}

\subsection{Algebra structure}
Theorem~\ref{thm:NortonHamming} leads to the following result on the structure of the Norton algebras of $H(n,e)$.

\begin{corollary}\label{cor:Hamming}
For $i=0,1,\ldots,n$, we have a direct sum decomposition of vector spaces
\[ V_i(H(n,e)) = \bigoplus_{S\subseteq [n]: |S|=i} V_S \]
where $V_S:= \mathrm{span} \{ \chi_u: u\in X_i,\ \supp(u)=S \}$ is isomorphic to $V_i(H(i,e))$ as an algebra.
This direct sum becomes a direct product of algebras if $i=0,1,n$ or if $e=2$ and either $i>\lfloor 2n/3 \rfloor$ or $i$ is odd.
\end{corollary}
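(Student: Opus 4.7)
The plan is to read the vector-space decomposition and the subalgebra structure directly from the Norton product formula in Theorem~\ref{thm:NortonHamming}, and then identify the cases in which cross-products between different summands vanish.

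First, the basis $\{\chi_u : u\in X_i\}$ of $V_i$ partitions according to support, so the subspaces $V_S$ with $|S|=i$ give the claimed vector-space direct sum. To see that each $V_S$ is a subalgebra isomorphic to $V_i(H(i,e))$, note that $\supp(u)=\supp(v)=S$ implies $\supp(u+v)\subseteq S$, so the Norton product formula yields either $0$ or $\chi_{u+v}\in V_S$. Restricting words to the positions in $S$ identifies $\{u\in X_i: \supp(u)=S\}$ bijectively with the full-support elements of $\ZZ_e^S\cong\ZZ_e^i$, and this bijection transports the Norton product on $V_S$ to the Norton product on $V_i(H(i,e))$ verbatim, giving the claimed algebra isomorphism.

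For the direct-product assertion, I would show $V_S\star V_T=0$ whenever $S\ne T$ in each stated case. The key observation is that for $u,v\in X_i$ with $\supp(u)=S$ and $\supp(v)=T$, every position in $S\triangle T$ lies in exactly one of the two supports, forcing $(u+v)(j)\ne 0$ there. Hence $\supp(u+v)\supseteq S\triangle T$ and $|\supp(u+v)|\ge 2i-2|S\cap T|$. For $u+v\in X_i$ we would therefore need $|S\cap T|\ge i/2$, together with $|S\cap T|<i$ (as $S\ne T$), which already rules out $i\in\{0,1,n\}$. For $e=2$, every $u\in X_i$ with $\supp(u)=S$ is the characteristic vector of $S$, so $\supp(u+v)=S\triangle T$ exactly; then $u+v\in X_i$ forces $|S\triangle T|=i$ to be even (impossible when $i$ is odd), and when $i$ is even it forces $|S\cup T|=3i/2$, which exceeds $n$ whenever $i>\lfloor 2n/3\rfloor$ (since $\lfloor 2n/3\rfloor+1>2n/3$).

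The main subtlety lies in the case $e\ge 3$ with $i\ge 2$: cancellation mod $e$ on $S\cap T$ can shrink $\supp(u+v)$ strictly below $S\cup T$, so the crude bound $|\supp(u+v)|\ge|S\triangle T|$ does not suffice to force vanishing when the supports overlap significantly, and in fact cross-products are generally nonzero outside the listed cases. The argument is clean over $\ZZ_2$ precisely because $1+1=0$ is the only sum available there, making $\supp(u+v)$ equal $S\triangle T$ on the nose and reducing the question to a purely combinatorial parity-and-size constraint on symmetric differences.
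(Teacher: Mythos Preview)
Your argument is correct and follows essentially the same route as the paper: partition the basis by support, check that each $V_S$ is a subalgebra isomorphic to $V_i(H(i,e))$ via restriction to the coordinates in $S$, and then verify vanishing of cross-products in the listed cases. One small difference worth noting: for the $e=2$ cases the paper simply forward-references Proposition~\ref{prop:zero} (which says the Norton product on $V_i(Q_n)$ is identically zero when $i>\lfloor 2n/3\rfloor$ or $i$ is odd), whereas you give the direct combinatorial argument that $\supp(u+v)=S\triangle T$ and then derive the parity and size constraints on the spot; your version is self-contained and avoids the forward reference. Your phrasing ``rules out $i\in\{0,1,n\}$'' is slightly loose for $i=0$ and $i=n$, since there the conclusion is not via the inequality $i/2\le |S\cap T|<i$ but simply because there is only one $i$-subset of $[n]$ and hence only one summand---you might separate those two trivial cases from the $i=1$ case for clarity.
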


\begin{proof}
Using the basis provided in Theorem~\ref{thm:NortonHamming}, one sees that the eigenspace $V_i(H(n,e))$ is the direct sum of subspaces $V_S$ for all $i$-sets $S\subseteq[n]$.
By the formula for the Norton product on $V_i(H(n,e))$ in Theorem~\ref{thm:NortonHamming}, each direct summand $V_S$ is a subalgebra isomorphic to $V_i(H(i,e))$ by sending $\chi_u$ to $\chi_{\bar u}$ for all $u\in X_i$ with $\supp(u)=S$, where $\bar u$ is obtained from $u$ by deleting all zero entries.
Finally, the above direct sum becomes a direct product of algebras in the following cases.
\begin{itemize}
\item
If $i=0$ or $i=n$ then this direct sum has only one summand.
\item
If $i=1$ then $\chi_u\star\chi_v=0$ whenever $u,v\in X_1$ with $\supp(u)\ne\supp(v)$.
\item
If $e=2$ and either $i>\lfloor 2n/3 \rfloor$ or $i$ is odd then $\chi_u\star\chi_v=0$ for all $u,v\in X_i$ by Proposition~\ref{prop:zero}, which will be provided later.
\qedhere
\end{itemize}

\end{proof}

The above corollary includes a previous result as a special case.

\begin{corollary}[\cite{Huang20}]\label{cor:V1Hn}
The Norton algebra $V_1(H(n,e))$ is isomorphic to the direct product of $n$ copies of the Norton algebra $V_1(H(1,e))$.
\end{corollary}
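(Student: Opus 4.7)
The plan is to deduce this corollary as a direct specialization of Corollary~\ref{cor:Hamming} at $i=1$, so the proof amounts to unpacking what that result says in this special case.

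First I would invoke Corollary~\ref{cor:Hamming} with $i=1$ to obtain the decomposition
\[ V_1(H(n,e)) = \bigoplus_{S\subseteq [n]: |S|=1} V_S, \]
where each summand $V_S = \mathrm{span}\{\chi_u : u\in X_1,\ \supp(u)=S\}$ is isomorphic as an algebra to $V_1(H(1,e))$. The index set consists of the $n$ singletons $\{1\}, \{2\}, \ldots, \{n\}$, so there are exactly $n$ summands, each isomorphic to $V_1(H(1,e))$.

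Next I would explain why this direct sum is in fact a direct product of algebras. By Theorem~\ref{thm:NortonHamming}, for $u, v \in X_1$ the product $\chi_u \star \chi_v$ equals $\chi_{u+v}$ when $u+v \in X_1$ and vanishes otherwise. If $\supp(u) = \{j\}$ and $\supp(v) = \{k\}$ with $j \ne k$, then $u+v$ has support $\{j,k\}$ of size $2$, hence $u+v \notin X_1$ and $\chi_u \star \chi_v = 0$. Thus the subspaces $V_{\{j\}}$ annihilate each other under $\star$, so the decomposition respects the algebra structure and gives a direct product rather than merely a direct sum.

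There is no main obstacle here: the statement is an immediate corollary of the finer structural result already established. The only substantive input is the observation that distinct singleton supports force the product to be zero, which is exactly the $i=1$ clause in the last part of the proof of Corollary~\ref{cor:Hamming}.
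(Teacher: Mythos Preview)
Your proposal is correct and follows exactly the paper's approach: the paper presents this corollary as an immediate special case of Corollary~\ref{cor:Hamming} at $i=1$, with the direct-product structure coming from the clause that $\chi_u\star\chi_v=0$ whenever $u,v\in X_1$ have distinct supports. Your unpacking of this argument is precisely what the proof of Corollary~\ref{cor:Hamming} already records in its $i=1$ bullet.
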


We have another similar result on the Norton algebra $V_n(H(n,3))$.

\begin{lemma}\label{lem:Vu}
For any $u\in X_i$ with $\supp(u)=S$ and $u(S)\subseteq \ZZ_e^\times$, the span $V_u$ of $\{ \chi_{ku}: k\in[e-1] \}$ is a subalgebra of the subalgebra $V_S$ of $V_i(H(n,e))$ satisfying $V_u \cong V_1(H(1,e))$.
\end{lemma}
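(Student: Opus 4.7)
The plan is to verify directly from the character-theoretic formula in Theorem~\ref{thm:NortonHamming} that $V_u$ is closed under the Norton product and that the assignment $\chi_{ku}\mapsto \chi_k$ is an algebra isomorphism onto $V_1(H(1,e))$.

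First I would show that the set $\{\chi_{ku}: k\in[e-1]\}$ actually lies in $V_S$ and consists of linearly independent vectors. The key observation is the unit hypothesis: since every nonzero coordinate $u(j)$ lies in $\ZZ_e^\times$, for any $k\in[e-1]\cong \ZZ_e-\{0\}$ the coordinate $k\,u(j)$ is nonzero precisely when $u(j)$ is nonzero. Hence $\supp(ku)=S$ for every such $k$, so $ku\in X_i$ and $\chi_{ku}$ belongs to the summand $V_S$ of $V_i$ identified in Corollary~\ref{cor:Hamming}. Moreover the map $k\mapsto ku$ is injective on $\ZZ_e$ (from $ku=k'u$ and any unit $u(j)\ne 0$ one gets $k\equiv k'\pmod e$), so the $e-1$ characters $\chi_{ku}$ are distinct and therefore linearly independent by Theorem~\ref{thm:character}.

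Next I would apply Theorem~\ref{thm:NortonHamming} to compute $\chi_{ku}\star\chi_{k'u}$ for $k,k'\in[e-1]$. By the same unit argument applied to $k+k'\in\ZZ_e$, the element $(k+k')u$ lies in $X_i$ precisely when $k+k'\not\equiv 0\pmod e$, in which case $\chi_{ku}\star\chi_{k'u}=\chi_{(k+k')u}$; otherwise the product vanishes. Since $(k+k')\bmod e$ lies in $[e-1]$ whenever it is nonzero, the result remains in $V_u$, so $V_u$ is a subalgebra of $V_S$.

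Finally I would note that $X_1$ for $H(1,e)$ is exactly $\{1,\ldots,e-1\}$, and that the linear bijection $\chi_{ku}\mapsto\chi_k$ matches the multiplication tables: in both algebras, the product of the basis elements indexed by $k$ and $k'$ is the basis element indexed by $k+k'\bmod e$ when that sum is nonzero in $\ZZ_e$, and is zero otherwise. I do not anticipate a real obstacle here; the argument is essentially bookkeeping built on the unit hypothesis, which is precisely the condition that makes the nonzero scalars in $\ZZ_e$ act freely on $u$ while preserving its support.
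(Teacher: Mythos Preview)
Your proposal is correct and follows essentially the same approach as the paper: both arguments use the unit hypothesis $u(S)\subseteq\ZZ_e^\times$ to conclude $\supp(ku)=S$ for all $k\in[e-1]$, then invoke the Norton product formula from Theorem~\ref{thm:NortonHamming} to verify closure and exhibit $\chi_{ku}\mapsto\chi_k$ as the isomorphism to $V_1(H(1,e))$. Your version is simply a more fully unpacked account of the paper's two-line proof, including the explicit linear independence check via Theorem~\ref{thm:character}.
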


\begin{proof}
We have $\supp(ku)=S$ for all $k\in[e-1]$ since $u(S)\subseteq\ZZ_e^\times$.
Thus $V_u$ is a subalgebra of $V_S$ and is isomorphic to the Norton algebra $V_1(H(1,e))$ via $\chi_{ku}\mapsto \chi_k$.
\end{proof}

\begin{proposition}\label{prop:VnHn}
The Norton algebra $V_n(H(n,3))$ is isomorphic to the direct product of $2^{n-1}$ copies of the Norton algebra $V_1(H(1,3))$. 
\end{proposition}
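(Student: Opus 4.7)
The plan is to apply Theorem~\ref{thm:NortonHamming} with $e=3$ and $i=n$, identify which products of basis vectors are nonzero, and then recognize the resulting algebra as a direct product indexed by the orbits of the scalar multiplication $u\mapsto 2u$ on $X_n$.

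First, I would note that $X_n=\{1,2\}^n\subseteq\ZZ_3^n$, so the Norton algebra $V_n(H(n,3))$ has the basis $\{\chi_u:u\in X_n\}$ of size $2^n$. The crucial observation is that for $u,v\in X_n$ and any coordinate $j$, we have $u(j),v(j)\in\{1,2\}$, so $u(j)+v(j)\in\{2,3,4\}$, which reduces mod~$3$ to a nonzero value if and only if $u(j)=v(j)$. Hence $u+v\in X_n$ if and only if $u=v$. Combining this with the product formula of Theorem~\ref{thm:NortonHamming}, I obtain
\[ \chi_u\star\chi_v = \begin{cases} \chi_{2u} & \text{if } v=u, \\ 0 & \text{if } v\ne u. \end{cases} \]

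Next I would exploit the involution $u\mapsto 2u$ on $X_n$, which swaps the digits $1$ and $2$ coordinate-wise and has no fixed points (since $u=2u$ forces $u=0\notin X_n$). Its orbits are therefore the $2^{n-1}$ two-element sets $\{u,2u\}$. For each such orbit, Lemma~\ref{lem:Vu} (applied with $S=[n]$ and $u(S)=\{1,2\}\subseteq\ZZ_3^\times$) identifies the span $V_u=\mathrm{span}\{\chi_u,\chi_{2u}\}$ as a subalgebra of $V_n(H(n,3))$ isomorphic to $V_1(H(1,3))$.

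Finally, I would observe that the computation above also shows that the product of any element of $V_u$ with any element of $V_{u'}$ vanishes whenever $\{u,2u\}\ne\{u',2u'\}$: if $v\in\{u,2u\}$ and $v'\in\{u',2u'\}$ are distinct basis indices, then $v\ne v'$, so $\chi_v\star\chi_{v'}=0$. Therefore the sum $V_n(H(n,3))=\bigoplus_{\{u,2u\}} V_u$ is a direct product of algebras, and each summand is isomorphic to $V_1(H(1,3))$, yielding the claim. There is no real obstacle here; the only point that needs care is the bookkeeping that guarantees the decomposition is indeed over unordered pairs $\{u,2u\}$ and that each summand contributes exactly one copy of $V_1(H(1,3))$, which is clear once one notes that $\chi_u$ and $\chi_{2u}$ together span a single copy of the $2$-dimensional algebra from Equation~\eqref{eq:V1}.
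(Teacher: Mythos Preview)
Your proof is correct and follows essentially the same approach as the paper: both arguments invoke Lemma~\ref{lem:Vu} to identify each $V_u=\vspan\{\chi_u,\chi_{2u}\}$ with $V_1(H(1,3))$, verify that basis elements from distinct orbits $\{u,2u\}$ multiply to zero, and count the $2^{n-1}$ orbits of the involution $u\mapsto 2u$ on $X_n$. Your coordinate-wise verification that $u+v\in X_n$ iff $u=v$ is in fact a bit more explicit than the paper's version.
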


\begin{proof}
By Lemma~\ref{lem:Vu}, there is a subalgebra $V_u\cong V_1(H(1,3))$ spanned by $\chi_u$ and $\chi_{2u}$ for all $u\in X_n$.
Let $u$ and $v$ be distinct elements in $X_n$.
We have $\{u(j), v(j)\} = \{1,2\}$ for some $j\in[n]$ and thus $u+v=0$.
This shows that $V_u$ is orthogonal to $V_v$ if $u\ne 2v$, and it is clear that $V_u=V_v$ if $u=2v$.
Thus $V_n(H(n,3))$ is isomorphic to the direct product of $2^{n-1}$ copies of $V_1(H(1,3))$.
\end{proof}

\begin{example}
The Norton algebra $V_2(H(2,3))$ has a basis $\{\chi_{11}, \chi_{12}, \chi_{21}, \chi_{22}\}$.
The span of $\chi_{11}$ and $\chi_{22}$ is isomorphic to $V_1(H(1,3))$, and so is the span of $\chi_{12}$ and $\chi_{21}$. 
These two copies of $V_1(H(1,3))$ are orthogonal in $V_2(H(2,3)$ and form a direct product.
Similarly, the Norton algebra $V_3(H(3,3))$ is the direct product of four subalgebras, each of which is isomorphic to $V_1(H(1,3))$. 
\end{example}

We also observe that there is no identity for the Norton product $\star$ on $V_i(H(n,e))$ unless $i=0$.

\begin{proposition}
The Norton algebra $V_i(H(n,e))$ is unital if and only if $i=0$. 
\end{proposition}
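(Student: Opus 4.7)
The plan is to prove both directions using the basis $\{\chi_u : u \in X_i\}$ and the explicit formula from Theorem~\ref{thm:NortonHamming}.

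For the ``if'' direction, when $i = 0$ the eigenspace $V_0$ is one-dimensional, spanned by the trivial character $\chi_0$ (the all-ones function). Since $0 + 0 = 0 \in X_0$, the formula gives $\chi_0 \star \chi_0 = \chi_0$, so $\chi_0$ serves as a multiplicative identity.

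For the ``only if'' direction, assume $i \geq 1$ and, toward a contradiction, suppose $\eta \in V_i$ is a multiplicative identity. Expand $\eta = \sum_{u \in X_i} c_u \chi_u$ in the basis from Theorem~\ref{thm:NortonHamming}. For any fixed $v \in X_i$, compute
\[
\eta \star \chi_v = \sum_{u \in X_i} c_u (\chi_u \star \chi_v) = \sum_{\substack{u \in X_i \\ u+v \in X_i}} c_u \chi_{u+v}.
\]
The key observation is that $\chi_v$ can appear on the right-hand side only when $u + v = v$, i.e., when $u = 0$; but $0 \notin X_i$ since $i \geq 1$. By linear independence of $\{\chi_w : w \in X_i\}$, the coefficient of $\chi_v$ in $\eta \star \chi_v$ is therefore $0$, contradicting $\eta \star \chi_v = \chi_v$.

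There is no real obstacle here; the statement is an immediate consequence of the fact that the basis is indexed by $X_i$, which excludes the zero element whenever $i \geq 1$, together with the multiplicative rule $\chi_u \star \chi_v \in \{\chi_{u+v}, 0\}$. The argument is short enough that it should appear as a brief paragraph rather than a numbered multi-step proof.
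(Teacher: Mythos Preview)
Your proof is correct and follows essentially the same approach as the paper: both expand a candidate identity in the basis $\{\chi_u : u\in X_i\}$, compute its Norton product with $\chi_v$, and observe that the coefficient of $\chi_v$ forces $0\in X_i$. The paper compresses the two directions into a single biconditional statement, while you treat them separately, but the underlying argument is identical.
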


\begin{proof}
Any element of $V_i(H(n,e))$ can be written as $\sum_{u\in X_i} c_u \chi_u$. 
For any $v\in X_i$ we have
\[ \sum_{u\in X_i} c_u \chi_u\star \chi_v = \sum_{u\in X_i:\ u+v \in X_i} c_u \chi_{u+v} = \chi_v \]
if and only if $0\in X_i$ and $c_0=1$. 
The result follows immediately.
\end{proof}

\subsection{Hypercube}

Finally, we focus on the case $e=2$.
In this case the Hamming graph $H(n,2)$ is known as the \emph{hypercube} $Q_n$.
The basis given in Theorem~\ref{thm:NortonHamming} for each eigenspace of $Q_n$ is real and actually over $\{0,1,-1\}$ since $\omega=-1$ when $e=2$.
Moreover, each element $u\in X_i$ is uniquely determined by its support $S=\supp(u)$, which is an $i$-subset of $[n]$, and the linear character $\chi_u = \chi_S$ is given by 
\[ \chi_S(x):= \prod_{j\in S} (-1)^{x(j)} \quad\text{for all }x\in X.\]
Using the \emph{symmetric difference} $S\triangle T:= (S-T)\cup(T-S)$ of two sets $S$ and $T$ we can rephrase Theorem~\ref{thm:NortonHamming} for the hypercube $Q_n$ below.

\begin{corollary}\label{cor:hypercube}
For $i=0,1,\ldots,n$, there exists a basis $\{\chi_S: S\subseteq[n],\ |S|=i\}$ for the eigenspace $V_i$ of the hypercube graph $Q_n$ such that for all $S,T\subseteq[n]$ with $|S|=|T|=i$,
\[ \chi_S \star \chi_T =
\begin{cases}
\chi_{S\triangle T} & \text{if } |S\triangle T|=i \\
0 & \text{otherwise}.
\end{cases} \]
\end{corollary}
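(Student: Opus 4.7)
The plan is to derive Corollary~\ref{cor:hypercube} as the specialization of Theorem~\ref{thm:NortonHamming} to the case $e=2$, where the finite abelian group $X=\ZZ_2^n$ admits a natural bijection with the power set of $[n]$ via support.

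First I would invoke Theorem~\ref{thm:NortonHamming} with $e=2$ to obtain, for each $i\in\{0,1,\ldots,n\}$, the basis $\{\chi_u : u\in X_i\}$ of $V_i$ satisfying $\chi_u\star\chi_v=\chi_{u+v}$ when $u+v\in X_i$ and zero otherwise. Since $\omega=\exp(2\pi i/2)=-1$, the defining formula~\eqref{eq:chi} specializes to $\chi_u(x)=\prod_{j\in[n]}(-1)^{u(j)x(j)}$, which is a real $\{-1,+1\}$-valued function.

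Next I would set up the reindexing. Because each $u\in X=\ZZ_2^n$ is an indicator function of $\supp(u)\subseteq[n]$, the map $u\mapsto S:=\supp(u)$ is a bijection $X\to 2^{[n]}$ restricting to a bijection $X_i\to\binom{[n]}{i}$. Under this identification we write $\chi_S:=\chi_u$, so that $u(j)x(j)\equiv[\![j\in S]\!]x(j)\pmod 2$ yields exactly $\chi_S(x)=\prod_{j\in S}(-1)^{x(j)}$.

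The only remaining point is to translate the indexing condition. Addition in $\ZZ_2^n$ is coordinatewise XOR, so $\supp(u+v)=\supp(u)\triangle\supp(v)$; hence $u+v\in X_i$ if and only if $|S\triangle T|=i$, and in that case $\chi_{u+v}=\chi_{S\triangle T}$. Substituting into the formula from Theorem~\ref{thm:NortonHamming} yields the desired expression for $\chi_S\star\chi_T$. There is essentially no obstacle here; the entire content is notational, and the only thing to be careful about is verifying that $\supp$ respects addition modulo $2$ (a one-line check) so that the case split on $|S\triangle T|=i$ correctly matches the case split on $u+v\in X_i$.
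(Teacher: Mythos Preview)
Your proposal is correct and follows exactly the paper's approach: the paper's proof is the single line ``This follows immediately from Theorem~\ref{thm:NortonHamming} with $e=2$,'' and your write-up simply unpacks that sentence by making explicit the bijection $u\leftrightarrow\supp(u)$ and the identity $\supp(u+v)=\supp(u)\triangle\supp(v)$ in $\ZZ_2^n$.
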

\begin{proof}
This follows immediately from Theorem~\ref{thm:NortonHamming} with $e=2$.
\end{proof}

\begin{example}\label{example:H32}
The Norton algebra $V_2(Q_3)$ has a basis $\{ \chi_R, \chi_S, \chi_T \}$, where $R=\{1,2\}$, $S=\{1,3\}$, $T=\{2,3\}$.  
We have $\chi_R\star \chi_R=\chi_S \star \chi_S=\chi_T\star\chi_T=0$, $\chi_R \star\chi_S = \chi_T$, $\chi_S\star \chi_T = \chi_R$, and $\chi_T\star\chi_R = \chi_S$.
This agrees with the cross product on the three-dimensional space, except for the anticommutativity.
\end{example}

\begin{lemma}\label{lem:Delta}
There exist $i$-sets $S,T\subseteq [n]$ such that $|S\triangle T|=j$ if and only if $0\le j\le \min\{2i,2(n-i)\}$ and $j$ is even.
\end{lemma}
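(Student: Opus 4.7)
The plan is to reduce the condition on $|S\triangle T|$ to a statement about the size of the intersection $|S\cap T|$, since for $i$-sets we have the inclusion-exclusion identity
\[ |S\triangle T| = |S|+|T|-2|S\cap T| = 2i-2|S\cap T|. \]
This immediately shows that $|S\triangle T|$ is always even, and since $0\le|S\cap T|\le i$, it forces $0\le |S\triangle T|\le 2i$. To get the other upper bound, I would invoke $|S\cup T|\le n$, which rewrites as $|S\cap T|\ge 2i-n$, yielding $|S\triangle T|\le 2(n-i)$. Combining, $|S\triangle T|$ must be a nonnegative even integer at most $\min\{2i,2(n-i)\}$.

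For the converse, given an even $j$ with $0\le j\le\min\{2i,2(n-i)\}$, I would set $k:=i-j/2$, noting that $0\le k\le i$ and $2i-k\le n$ by the two bounds on $j$. Then I can pick any subsets $A,B,C\subseteq[n]$ that are pairwise disjoint with $|A|=k$, $|B|=|C|=i-k$, and $|A|+|B|+|C|=2i-k\le n$, and take $S:=A\cup B$, $T:=A\cup C$. Then $|S|=|T|=i$, $S\cap T=A$, and $|S\triangle T|=|B|+|C|=2(i-k)=j$.

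There is essentially no obstacle here; the lemma is a direct consequence of inclusion-exclusion plus the pigeonhole bound $|S\cup T|\le n$. The only mild care needed is making sure the two upper bounds $2i$ and $2(n-i)$ are both derived and then combined as a minimum, and verifying that the constructive half really uses the full range of the minimum (the case $2(n-i)<2i$ is exactly where the universe-size constraint becomes binding, and in that case $k=i-j/2\ge 2i-n$ is forced by $j\le 2(n-i)$).
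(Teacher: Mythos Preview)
Your proof is correct and follows essentially the same approach as the paper: both directions use the inclusion-exclusion identity $|S\triangle T|=2i-2|S\cap T|$ together with the bound $|S\cup T|\le n$ for the forward implication, and both construct explicit $i$-sets for the converse. The only cosmetic difference is that the paper writes down concrete sets $S=\{1,\ldots,i\}$ and $T=\{1,\ldots,(2i-j)/2,\,i+1,\ldots,(2i+j)/2\}$, whereas you phrase the construction via abstract pairwise disjoint pieces $A,B,C$.
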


\begin{proof}
Let $S$ and $T$ be $i$-subsets of $[n]$ with $|S\triangle T|=j$.
We have
\[ \begin{aligned}
|S\cap T| &= (|S|+|T|-|S\triangle T|)/2 = (2i-j)/2 \ge0 \qand \\
|S\cup T| &= |S\triangle T|+|S\cap T|=(2i+j)/2 \le n.
\end{aligned} \]
Thus we have $0\le j\le \min\{2i,2(n-i)\}$ and $j$ must be even 

Conversely, let $j$ be an even integer satisfying $0\le j\le \min\{2i,2(n-i)\}$.
Then $S=\{1,2,\ldots,i\}$ and $T=\{1,2,\ldots,(2i-j)/2, i+1,\ldots,(2i+j)/2\}$ are $i$-subsets of $[n]$ with	 $|S\triangle T|=j$.
\end{proof}

\begin{proposition}\label{prop:zero}
If $i> \lfloor 2n/3 \rfloor$ or $i$ is odd then the Norton algebra $V_i(Q_n)$ has a zero product.
\end{proposition}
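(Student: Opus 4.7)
The plan is to reduce the claim directly to Corollary~\ref{cor:hypercube} and Lemma~\ref{lem:Delta}. By Corollary~\ref{cor:hypercube}, the Norton product on $V_i(Q_n)$ is zero on all pairs of basis elements $\chi_S, \chi_T$ (with $|S|=|T|=i$) precisely when there are no $i$-subsets $S,T\subseteq[n]$ with $|S\triangle T|=i$. Since $\{\chi_S : |S|=i\}$ is a basis and $\star$ is bilinear, zeroness on basis pairs is equivalent to the whole product being identically zero.

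So the first step is to invoke Lemma~\ref{lem:Delta} with $j=i$: such $S$ and $T$ exist if and only if $i$ is even and $0\le i\le \min\{2i,2(n-i)\}$. The condition $i\le 2i$ is automatic, so the nontrivial requirements are that $i$ is even and $i\le 2(n-i)$, i.e., $3i\le 2n$, i.e., $i\le \lfloor 2n/3\rfloor$.

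It remains to observe the two cases of the hypothesis. If $i$ is odd, then $j=i$ is odd, and Lemma~\ref{lem:Delta} rules out the existence of such $S,T$ immediately. If $i>\lfloor 2n/3\rfloor$, then $3i>2n$, so $i>2(n-i)$, again violating the condition of Lemma~\ref{lem:Delta}. In either case no pair of basis elements can have $|S\triangle T|=i$, so $\chi_S\star\chi_T=0$ for all basis pairs, and hence the product on $V_i(Q_n)$ is identically zero.

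There is really no main obstacle here: the proposition is a direct combinatorial corollary of the basis formula in Corollary~\ref{cor:hypercube} together with the parity/size constraint in Lemma~\ref{lem:Delta}. The only thing one has to be careful about is correctly translating $i>\lfloor 2n/3\rfloor$ (an integer inequality) into the strict inequality $3i>2n$, which is immediate since $3i$ and $2n$ are integers.
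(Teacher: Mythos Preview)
Your proof is correct and follows essentially the same approach as the paper: both arguments invoke Lemma~\ref{lem:Delta} with $j=i$ to conclude that no $i$-subsets $S,T\subseteq[n]$ satisfy $|S\triangle T|=i$ under the stated hypotheses, and then appeal to the basis formula (Corollary~\ref{cor:hypercube}) to deduce that the product vanishes. The paper's version is simply more terse, omitting the explicit mention of bilinearity.
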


\begin{proof}
By Lemma~\ref{lem:Delta}, there exist $i$-sets $S,T\subseteq[n]$ such that $|S\triangle T|=i$ if and only if $i\le \lfloor 2n/3 \rfloor$ and $i$ is even.
Thus if $i> \lfloor 2n/3 \rfloor$ or $i$ is odd then $\chi_S\star\chi_T=0$ for all $i$-sets $S,T\subseteq[n]$.
\end{proof}

\section{Automorphisms}\label{sec:Aut}
Recall that an \emph{automorphism} of an algebra $(V,\star)$ is an automorphism $\phi$ of the underlying vector space $V$ such that $\phi(u\star v)=\phi(u)\star\phi(v)$ for all $u,v\in V$.
Note that we do not assume an algebra is associative or unital.
In this section we study the automorphisms of the Norton algebras of the Hamming graph $H(n,e)$.
We begin with some special cases.

\begin{proposition}\label{prop:AutV0Hn}
The automorphism group of the Norton algebra $V_i(H(n,e))$ is the trivial group if $i=0$ or the full general linear group of the underlying vector space if $e=2$ and either $i>\lfloor 2n/3 \rfloor$ or $i$ is odd.
\end{proposition}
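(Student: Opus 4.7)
The plan is to dispose of each case by reducing to a trivial algebra structure and invoking results already established in the excerpt.

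For the case $i=0$, note that $V_0(H(n,e))$ is one-dimensional, spanned by $\chi_0$, and Theorem~\ref{thm:NortonHamming} gives $\chi_0\star\chi_0 = \chi_0$ because $0+0=0\in X_0$. Any algebra automorphism $\phi$ is determined by a scalar $c\in\CC^\times$ with $\phi(\chi_0)=c\chi_0$, and the multiplicative relation forces $c^2\chi_0 = \phi(\chi_0\star\chi_0) = \phi(\chi_0) = c\chi_0$, so $c=1$. Hence $\Aut(V_0(H(n,e)))$ is trivial.

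For the case $e=2$ with $i>\lfloor 2n/3\rfloor$ or $i$ odd, Proposition~\ref{prop:zero} shows that the Norton product on $V_i(Q_n)$ is identically zero. The only constraint an algebra automorphism must then satisfy is compatibility with the zero product, which is automatic for every linear bijection (since $\phi(0)=0$). Therefore $\Aut(V_i(Q_n))$ coincides with the general linear group of the underlying vector space.

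There is essentially no obstacle: both halves follow immediately once the relevant product is identified, the first from direct computation on a one-dimensional algebra and the second from the fact that a zero algebra has no multiplicative data for an automorphism to preserve. The only point requiring a little care is to confirm that our definition of algebra automorphism demands only linearity plus multiplicativity (not, for instance, preservation of a unit), which is stated explicitly at the opening of Section~\ref{sec:Aut}.
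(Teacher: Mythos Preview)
Your proof is correct and follows essentially the same approach as the paper: for $i=0$ you use that $V_0$ is spanned by the idempotent $\chi_0$, and for the $e=2$ cases you invoke Proposition~\ref{prop:zero} to conclude the product is identically zero. The paper's proof is simply a more terse version of exactly these two observations.
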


\begin{proof}
The Norton algebra $V_i(H(n,e))$ is spanned by an idempotent $\chi_{0}$ if $i=0$, or has a zero product if $e=2$ and either $i>\lfloor 2n/3 \rfloor$ or $i$ is odd by Proposition~\ref{prop:zero}.
The result follows.
\end{proof}

To deal with the remaining cases, we review some basic concepts from group theory.
If there is a homomorphism $\phi: H\to\Aut(N)$ from a group $H$ to the automorphism group of a group $N$, then the \emph{semidirect product} $N\rtimes H$ is the Cartesian product $N\times H$ endowed with the operation $(n_1,h_1)(n_2,h_2)=(n_1 \phi(h_1)(n_2), h_1h_2)$ for all $n_1,n_2\in N$ and $h_1,h_2\in H$.
In particular, the semidirect product $\ZZ_e\rtimes \ZZ_e^\times$ of the group $\ZZ_e$ and its multiplicative group $\ZZ_e^\times$ is the Cartesian product $\ZZ_e \times \ZZ_e^\times$ equipped with an operation defined by $(a,b)(a',b'):=(a+b^{-1}a',bb')$ for all $a,a'\in\ZZ_e$ and $b,b'\in \ZZ_e^\times$.
The identity element of this group is $(0,1)$, where $0$ and $1$ are the identity elements of $\ZZ_e$ and $\ZZ_e^\times$, respectively.

Next, the \emph{symmetric group} $\SS_n$ consists of all permutations on the set $[n]$, and we let $\id$ denote its identity.
The \emph{wreath product} $G\wr\SS_n$ of a group $G$ with $\SS_n$ is the semidirect product $G^n \rtimes \SS_n$, where $G^n$ is the direct product of $n$ copies of $G$ and $\SS_n$ permutes these $n$ copies.
More precisely, the elements of $G\wr\SS_n$ are of the form $(g,\sigma)$ with $g\in G^n$ and $\sigma\in\SS_n$, and with $h_j$ denoting the $j$th component of an $n$-tuple $h\in G^n$, the operation of $G\wr\SS_n$ is defined by $(g,\sigma)(g',\sigma') = (g\sigma(g'),\sigma\sigma')$ where $\sigma(g') = (g'_{\sigma(1)},\ldots,g'_{\sigma(n)})$. 

The automorphism group of the Hamming graph $H(n,e)$ is isomorphic to the wreath product $\SS_e \wr \SS_n$~\cite[Theorem~9.2.1]{DistReg1}, which acts on $X=\ZZ_e^n$ in a natural way.
However, not all automorphisms of $H(n,e)$ induce automorphisms of the Norton algebra $V_i(H(n,e))$ nor can they give all automorphisms of $V_i(H(n,e))$.
This can be seen in our next result, which involves the group $(\ZZ_e\times\ZZ_e^\times)\rtimes\SS_n$.
The elements in this group are of the form $(a,b,\sigma)$ with $a\in\ZZ_e^n$, $b\in(\ZZ_e^\times)^n$, and $\sigma\in\SS_n$.
If $(a,b,\sigma)$ and $(a',b',\sigma')$ are in this group then  
\begin{equation}\label{eq:wr}
(a,b,\sigma)(a',b',\sigma') = (a+b^{-1}\cdot\sigma(a'), b\cdot\sigma(b'), \sigma\sigma').
\end{equation}
Here the dot ``$\cdot$'' denotes the entry-wise product on $\ZZ_e^n$, which makes $\ZZ_e^n$ become a ring with $(\ZZ_e^\times)^n$ as its group of units, and $\sigma\sigma'$ is the composition of permutations.

\begin{theorem}\label{thm:AutHamming}
Each element $\phi=(a,b,\sigma)\in(\ZZ_e\rtimes \ZZ_e^\times)\wr\SS_n$ induces an automorphism of the Norton algebra $V_i(H(n,e))$ by sending $\chi_u$ to $\chi_a(b\cdot\sigma(u))\chi_{b\cdot\sigma(u)}$ for all $u\in X_i$.
Such automorphisms form a group isomorphic to $(\ZZ_e\rtimes\ZZ_e^\times) \wr \SS_n$ if $e\ge3$ and $i\ge1$ or if $e=2$ and $1\le i<n$ is odd, but isomorphic to $(\ZZ_2\wr \SS_n) / \{ (0,1,\id), (1,1,\id)\}$ if $e=2$ and $1\le i<n$ is even.
\end{theorem}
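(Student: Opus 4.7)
The plan is to show in three steps that each triple $(a,b,\sigma)$ determines an algebra automorphism of $V_i=V_i(H(n,e))$, that the assignment $\Phi\colon(a,b,\sigma)\mapsto\phi_{(a,b,\sigma)}$ is a group homomorphism $(\ZZ_e\rtimes\ZZ_e^\times)\wr\SS_n\to\Aut(V_i)$ compatible with~\eqref{eq:wr}, and finally to compute $\ker\Phi$ in each of the cases listed.

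For well-definedness, note that every coordinate of $b$ lies in $\ZZ_e^\times$ and $\sigma$ only permutes coordinates, so $u\mapsto b\cdot\sigma(u)$ is a support-preserving bijection of $X=\ZZ_e^n$, and in particular restricts to a bijection of $X_i$; since $\chi_a(b\cdot\sigma(u))\in\CC^\times$, the prescription permutes the basis $\{\chi_u:u\in X_i\}$ of $V_i$ up to nonzero scalars and thus extends uniquely to a linear automorphism. Multiplicativity then follows from Theorem~\ref{thm:NortonHamming} together with $b\cdot\sigma(u)+b\cdot\sigma(v)=b\cdot\sigma(u+v)$ (same support size as $u+v$) and the character identity $\chi_a(b\cdot\sigma(u))\chi_a(b\cdot\sigma(v))=\chi_a(b\cdot\sigma(u+v))$, so that both sides vanish in the same cases and otherwise agree.

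For the homomorphism property, compute $\phi_{(a,b,\sigma)}\circ\phi_{(a',b',\sigma')}$ on a basis vector $\chi_u$ and compare with $\phi_{(a,b,\sigma)(a',b',\sigma')}(\chi_u)$ obtained from~\eqref{eq:wr}; after unwinding $\sigma(b'\cdot\sigma'(u))=\sigma(b')\cdot(\sigma\sigma')(u)$ the only nonobvious step is the identity $\chi_{b^{-1}\sigma(a')}\bigl(b\cdot\sigma(w)\bigr)=\chi_{a'}(w)$, which holds because $b$ and $b^{-1}$ cancel entrywise and $\sigma$ preserves the pairing $(c,d)\mapsto\sum_j c_jd_j$ on $\ZZ_e^n$.

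For the kernel, $(a,b,\sigma)\in\ker\Phi$ iff $b\cdot\sigma(u)=u$ and $\chi_a(u)=1$ for every $u\in X_i$. Feeding indicators of $i$-subsets into the first equation forces $\sigma$ to fix every $i$-subset of $[n]$, giving $\sigma=\id$ whenever $1\le i<n$, and when $i=n$ (only relevant for $e\ge 3$) one uses an element of $X_n$ with a singled-out coordinate value to reach the same conclusion; these indicators then force $b=1$ (automatic for $e=2$). The remaining constraint $\chi_a(u)=1$ on $X_i$ is handled by varying a single coordinate of $u$ over $\{1,\ldots,e-1\}$: for $e\ge 3$ the choice $c-c'=1$ yields $\omega^{a_j}=1$ and hence $a=0$, producing the trivial kernel; for $e=2$ one gets that all $i$-subset sums of $a$ are even, forcing $a$ constant, and back-substitution shows both $a=0$ and $a=(1,\ldots,1)$ work when $i$ is even while only $a=0$ works when $i$ is odd, yielding the stated kernels. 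The main obstacle is this last kernel analysis in the composite-$e$ range, where $\ZZ_e$ has zero divisors, but the specific choice $c-c'=1\in\ZZ_e^\times$ circumvents the issue cleanly.
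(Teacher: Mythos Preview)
Your proof is correct and follows essentially the same route as the paper: verify that $\phi_{(a,b,\sigma)}$ is an algebra automorphism via Theorem~\ref{thm:NortonHamming}, check the homomorphism property against~\eqref{eq:wr} using the same bilinear-pairing identity, and compute the kernel by first forcing $\sigma=\id$ (support considerations for $1\le i<n$, a distinguished-entry argument for $i=n$, $e\ge3$), then $b=1$, then analyzing $\chi_a|_{X_i}\equiv 1$ by perturbing one coordinate. The only cosmetic differences are your explicit use of indicator vectors and the specific choice $c-c'=1$ in the $e\ge3$ step, both of which are mild sharpenings of exactly what the paper does.
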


\begin{proof}
Let $\phi=(a,b,\sigma)\in(\ZZ_e\rtimes \ZZ_e^\times)\wr\SS_n$.
Then $u$ and $b\cdot\sigma(u)$ have the same support for any $u\in X_i$ since $b\in(\ZZ_e^\times)^n$.
If $u,v\in X_i$ with $u+v\in X_i$ then $b\cdot\sigma(u+v) = b\cdot\sigma(u)+b\cdot\sigma(v)$ and 
\[ \phi(\chi_u\star\chi_v) = \chi_a(b\cdot\sigma(u+v))\chi_{b\cdot \sigma(u+v)} = \chi_a(b\cdot\sigma(u))\chi_a(b\cdot\sigma(v))\chi_{b\cdot\sigma(u) + b\cdot\sigma(v)} = \phi(\chi_u)\star\phi(\chi_v). \] 
Similarly, if $u,v\in X_i$ with $u+v\notin X_i$ then 
\[ \phi(\chi_u\star\chi_v) = \phi(0) = 0 = \chi_a(b\cdot\sigma(u))\chi_{b\cdot\sigma(u)}\star \chi_a(b\cdot\sigma(v))\chi_{b\cdot\sigma(v)} = \phi(\chi_u)\star\phi(\chi_v). \] 
Therefore $\phi$ induces an automorphism of the Norton algebra $V_i(H(n,e))$.

If $\phi'=(a',b',\sigma') \in(\ZZ_e\rtimes \ZZ_e^\times)\wr\SS_n$ then for any $u\in X_i$ we have
\[ \phi(\phi'(\chi_u)) = \phi(\chi_{a'}(b'\cdot\sigma'(u))\chi_{b'\cdot\sigma'(u)}) 
= \chi_{a}(b\cdot\sigma(b')\cdot\sigma\sigma'(u)) \chi_{a'}(b'\cdot\sigma'(u)) \chi_{b\cdot\sigma(b')\cdot\sigma\sigma'(u)}. \]
On the other hand, we have $\phi\phi' = (a+b^{-1}\cdot\sigma(a'), b\cdot\sigma(b'), \sigma\sigma')$ by Equation~\eqref{eq:wr} and thus
\[ (\phi\phi')(\chi_u) 
= \chi_{a}(b\cdot\sigma(b')\cdot\sigma\sigma'(u)) \chi_{b^{-1}\cdot\sigma(a')}(b\cdot\sigma(b')\cdot\sigma\sigma'(u)) \chi_{b\cdot\sigma(b')\cdot\sigma\sigma'(u)} \]
Then we obtain $\phi(\phi'(\chi_u))=(\phi\phi')(u)$ as Equation~\eqref{eq:chi} implies
\[ \chi_{b^{-1}\cdot\sigma(a')} (b\cdot\sigma(b')\cdot\sigma\sigma'(u)) = \chi_{\sigma(a')} (\sigma(b'\cdot\sigma'(u)) = \chi_{a'}(b'\cdot\sigma'(u)).\]

It follows that we have a homomorphism from $(\ZZ_e\rtimes \ZZ_e^\times)\wr\SS_n$ to the automorphism group of $V_i(H(n,e))$, and we need to show that its kernel is trivial if $e\ge3$ and $i\ge1$ or if $e=2$ and $1\le i<n$ is odd, or equals $\{ (0,1,\id), (1,1,\id)\}$ if $e=2$ and $1\le i<n$ is even.
To this end, suppose that $\phi(\chi_u)=\chi_u$, i.e., $b\cdot\sigma(u)=u$ and $\chi_a(b\cdot\sigma(u))=\chi_a(u)=1$ for all $u\in X_i$.
 
If $1\le i<n$ then $\SS_n$ acts faithfully on $i$-subsets of $[n]$ and thus taking the support of both sides of the equality $b\cdot\sigma(u)=u$ gives $\sigma=\id$.
If $i=n$ and $e\ge3$ then we also have $\sigma=\id$ since if $\sigma(j)=k\ne j$ for some $j\in[n]$ then $b\cdot\sigma(u)\ne u$ for some $u\in X_i$ with $u(j)=1$ and $u(k)\ne b_k$.

Next, if $\sigma=\id$ then $b\cdot\sigma(u)=b\cdot u=u$ for all $u\in X_i$ and thus $b=1$.

Finally, we consider the condition that $\chi_a(u)=1$ for all $u\in X_i$.
\begin{itemize}
\item
If $e\ge3$ then $a=0$ as we can obtain $v\in X_i$ from $u$ by changing the $j$th entry to a different nonzero number and then $\chi_a(u)=\chi_a(v)$ implies $a(j)=0$ for any $j\in[n]$.
\item
Assume $e=2$ and $1\le i<n$.
For any distinct $j,k\in[n]$, there exists $u\in X_i$ such that $u(j)=1$ and $u(k)=0$.
We obtain $v\in X_i$ from $u$ by switching the $j$th and $k$th entries.
Then $\chi_a(u)=\chi_a(v)$ implies $a(j)=a(k)$.
Thus $a$ is either $0$ or $1$, and the latter is possible if and only if $i$ is even in order to have $\chi_{1}(u)=1$ for all $u\in X_i$. \qedhere
\end{itemize} 
\end{proof}

When $e=2$ the group in Theorem~\ref{thm:AutHamming} becomes $\ZZ_2\wr\SS_n$.
This group is often interpreted as the \emph{hyperoctahedral group} $\SS_n^B$, which consists of bijections $f$ on $\{\pm1,\ldots,\pm n\}$ with $f(-j)=-f(j)$ for all $j\in[n]$.
The group $\SS_n^B$ is the Coxeter group of type $B_n$ and its elements are called \emph{signed permutations} of $[n]$ as they are determined by their values on $[n]$.
We can write an element in $\SS_n^B$ as $f=(\sigma,\epsilon)$ for unique $\sigma\in\SS_n$ and $\epsilon:[n]\to\{\pm1\}$ such that $f(j)= \epsilon(\sigma(j))\sigma(j) =(\epsilon\sigma)(j)\sigma(j)$ for all $j\in [n]$, where $\epsilon\sigma$ is the composition of $\epsilon$ and $\sigma$.
If $f'=(\sigma', \epsilon')\in\SS_n^B$ then $ff' = (\epsilon\cdot \epsilon'\sigma^{-1}, \sigma\sigma')$, where the dot ``$\cdot$'' is the entry-wise product, since
\begin{equation}\label{eq:SB}
f(f'(j)) = f( \epsilon'\sigma'(j)\sigma'(j) ) = \epsilon\sigma\sigma'(j) \epsilon'\sigma'(j) \sigma\sigma'(j) \quad\text{for all } j\in[n]. 
\end{equation}
Comparing this with the operation in the group $\ZZ_2\wr\SS_n$ as a special case ($b=b'=1$) of Equation~\eqref{eq:wr}, we have a group isomorphism $\ZZ_2\wr\SS_n \cong \SS_n^B$ by sending $a\in\ZZ_2^n$ to $\epsilon:[n]\to\{\pm1\}$ with $\epsilon(j) := (-1)^{a(j)}$ for all $j\in[n]$.
This isomorphism takes the subgroup $\{(0,\id), (1,\id)\}$ of $\ZZ_2\wr\SS_n$ to the center $\{\pm1\}$ of $\SS_n^B$, where
by abuse of notation we write $f=c$ if there exists a constant $c$ such that $f(j)=c$ for all $j\in[n]$. 
Then we can rephrase Theorem~\ref{thm:AutHamming} in the case $e=2$ as follows.

\begin{corollary}\label{cor:AutH2}
Every signed permutation in $\SS_n^B$ induces an automorphism of the Norton algebra $V_i(Q_n)$ by sending $\chi_S$ to $\epsilon(\sigma(S))\chi_{\sigma(S)}$, where $\epsilon(T):=\prod_{j\in T}\epsilon(j)$ for all $T\subseteq[n]$.
Such automorphisms form a group isomorphic to $\SS_n^B$ if $1\le i<n$ is odd or $\SS_n^B/\{\pm1\}$ if $1\le i<n$ is even.
\end{corollary}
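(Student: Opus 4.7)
My plan is to derive Corollary~\ref{cor:AutH2} as a direct translation of Theorem~\ref{thm:AutHamming} in the case $e=2$, using the isomorphism $\ZZ_2\wr\SS_n\cong\SS_n^B$ described just before the corollary. When $e=2$ the multiplicative group $\ZZ_2^\times$ is trivial, so the component $b$ in Theorem~\ref{thm:AutHamming} is forced to be the identity $1$ and $(\ZZ_e\rtimes\ZZ_e^\times)\wr\SS_n$ collapses to $\ZZ_2\wr\SS_n$. Consequently the formula $\chi_u\mapsto \chi_a(b\cdot\sigma(u))\,\chi_{b\cdot\sigma(u)}$ from that theorem simplifies to $\chi_u\mapsto \chi_a(\sigma(u))\,\chi_{\sigma(u)}$ for every $u\in X_i$.

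Next I would rewrite this formula using the subset notation $\chi_S$ from Corollary~\ref{cor:hypercube}. Since each $u\in X_i$ is $\{0,1\}$-valued and is determined by $S=\supp(u)$, and since $\omega=-1$ when $e=2$, the scalar $\chi_a(\sigma(u))$ expands via Equation~\eqref{eq:chi} into a product of signs $(-1)^{a(j)}=\epsilon(j)$ over the support of $\sigma(u)$. After tracking the convention for how $\sigma$ acts on $u\in\ZZ_2^n$, this product is precisely $\epsilon(\sigma(S))$, while $\chi_{\sigma(u)}$ becomes $\chi_{\sigma(S)}$. Thus the image of $\chi_S$ under the automorphism associated with $f=(\sigma,\epsilon)\in\SS_n^B$ is exactly $\epsilon(\sigma(S))\chi_{\sigma(S)}$, as advertised.

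As an independent sanity check I would verify directly from Corollary~\ref{cor:hypercube} that $\chi_S\mapsto\epsilon(\sigma(S))\chi_{\sigma(S)}$ respects the Norton product: when $|S\triangle T|=i$, the bijectivity of $\sigma$ gives $\sigma(S\triangle T)=\sigma(S)\triangle\sigma(T)$, and since $\epsilon(j)^2=1$, the characters $\epsilon$ satisfy the multiplicative identity $\epsilon(A)\epsilon(B)=\epsilon(A\triangle B)$, so both sides of the homomorphism equation yield $\epsilon(\sigma(S))\epsilon(\sigma(T))\chi_{\sigma(S)\triangle\sigma(T)}$; when $|S\triangle T|\ne i$, both sides are zero. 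This check keeps the argument self-contained even if one ignores Theorem~\ref{thm:AutHamming}.

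The final step is identifying the image of $\SS_n^B$ in $\Aut(V_i(Q_n))$. By Theorem~\ref{thm:AutHamming}, the kernel on the $\ZZ_2\wr\SS_n$ side is trivial when $1\le i<n$ is odd and equals $\{(0,1,\id),(1,1,\id)\}$ when $1\le i<n$ is even. Under the isomorphism $\ZZ_2\wr\SS_n\to\SS_n^B$ recorded in the paper, this subgroup corresponds to the center $\{\pm1\}$ of $\SS_n^B$, yielding the induced groups $\SS_n^B$ and $\SS_n^B/\{\pm1\}$ in the two cases. The main obstacle I foresee is purely notational: the wreath-product action of $\sigma$ on $u\in\ZZ_2^n$ as defined earlier produces support $\sigma^{-1}(S)$ rather than $\sigma(S)$, so some care is required in choosing the specific isomorphism $\ZZ_2\wr\SS_n\to\SS_n^B$ (up to the $\sigma\leftrightarrow\sigma^{-1}$ convention) that makes the statement $\chi_S\mapsto\epsilon(\sigma(S))\chi_{\sigma(S)}$ come out literally as written.
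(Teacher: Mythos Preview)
Your proposal is correct and matches the paper's approach exactly: the paper presents this corollary simply as ``Theorem~\ref{thm:AutHamming} in the case $e=2$'' rephrased via the isomorphism $\ZZ_2\wr\SS_n\cong\SS_n^B$ described in the preceding paragraph, with the subgroup $\{(0,1,\id),(1,1,\id)\}$ identified with the center $\{\pm1\}$ of $\SS_n^B$. Your additional direct verification of the homomorphism property and your caution about the $\sigma$ versus $\sigma^{-1}$ convention are reasonable bookkeeping that the paper leaves implicit.
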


In general, the automorphism group of the Norton algebra $V_i(H(n,e))$ does not equal the subgroup given in Theorem~\ref{thm:AutHamming}, as shown in Example~\ref{example:AutHamming} below.
To better understand it, we try to construct all of the idempotents in $V_i(H(n,e))$, as any algebra automorphism must permute the idempotents.
Here an \emph{idempotent} is an element $\eta$ satisfying $\eta^2=\eta\star\eta=\eta$.
We provide some small examples below.

\begin{example}\label{example:AutHamming}
The Norton algebras $V_1(H(n,e))$ and $V_n(H(n,3))$ are isomorphic to direct products of copies of $V_1(H(1,e))$
by Corollary~\ref{cor:V1Hn} and Proposition~\ref{prop:VnHn}.
We have a basis $\{\chi_1, \ldots, \chi_{e-1}\}$ for $V_1(H(1,e))$.
For $e=2$, the automorphism group of $V_1(H(1,2))$ is given by Proposition~\ref{prop:AutV0Hn}.
For $e=3$, one can check that the nonzero idempotents of the Norton algebra $V_1(H(1,3))$ are $\chi_1+\chi_2$, $\omega\chi_1+\omega^2\chi_2$, and $\omega^2\chi_1+\omega\chi_2$. 
An automorphism of $V_1(H(1,3))$ must permute the three nonzero idempotents.
Combining this with Theorem~\ref{thm:AutHamming} one sees that that the automorphism group of $V_1(H(1,3))$ is isomorphic $\SS_3\cong \ZZ_3\rtimes\ZZ_3^\times$.
This does not hold for $e=4$ though, as the Norton algebra $V_1(H(1,4))$ has four nonzero idempotents $\frac12 \chi_2 \pm \frac12(\chi_1+\chi_3)$ and $-\frac12 \chi_2 \pm \frac i2(\chi_1-\chi_3)$, but the symmetric group $\SS_4$ contains $\ZZ_4\rtimes\ZZ_4^\times$ as a proper subgroup. 
\end{example}

To generalize Example~\ref{example:AutHamming}, we need to examine idempotents carefully.
Let $\phi$ be any automorphism of an algebra $(V,\star)$.
If two idempotents $\eta_1$ and $\eta_2$ in this algebra are \emph{orthogonal}, meaning that $\eta_1\star\eta_2=0$, then so are $\phi(\eta_1)$ and $\phi(\eta_2)$.
If an idempotent $\eta\in V$ is \emph{primitive} in the sense that it cannot be written as the sum of two nonzero orthogonal idempotents, then $\phi(\eta)$ must be a primitive idempotent as well, since otherwise there would exist two nonzero orthogonal idempotents $\eta_1$ and $\eta_2$ such that $\phi(\eta)=\eta_1+\eta_2$, leading to a contradiction that $\eta=\phi^{-1}(\eta_1)+\phi^{-1}(\eta_2)$ with $\phi^{-1}(\eta_1)\star\phi^{-1}(\eta_2) = \phi^{-1}(\eta_1\star\eta_2)=0$.
We will find all idempotents and classify the primitive ones in the Norton algebra $V_1(H(1,e))$ in order to determine its automorphism group.

\begin{proposition}\label{prop:IdempotentsV1}
For $e\ge3$, the Norton algebra $V_1(H(1,e))$ is spanned by distinct nonzero idempotents 
\begin{equation}\label{eq:idempotents}
\eta_j:= \frac{1}{e-2}(\omega^j \chi_1+\omega^{2j}\chi_2+\cdots+\omega^{(e-1)j}\chi_{e-1}) \quad \text{for }j=0,1,\ldots,e-1
\end{equation}
which satisfy $\eta_j\star\eta_k=-\frac{\eta_j+\eta_k}{e-2}$ whenever $j\ne k$ and $\eta_0+\cdots+\eta_{e-1}=0$, 
and any nonzero idempotent in $V_1(H(1,e))$ can be written uniquely as $\frac{e-2}{e-2\ell}(\eta_{j_1}+\cdots+\eta_{j_\ell})$ for some distinct $j_1,\ldots,j_\ell \in[e-1]$. 
\end{proposition}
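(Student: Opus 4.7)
The plan is to exploit the Norton product formula of Theorem~\ref{thm:NortonHamming} specialized to $n=1$: for $a,b\in\{1,\ldots,e-1\}$, one has $\chi_a\star\chi_b=\chi_{a+b}$ if $a+b\not\equiv 0\pmod e$ and $0$ otherwise. First I will verify the multiplicative relations among the $\eta_j$'s by expanding $\eta_j\star\eta_k$ in the $\chi$-basis and collecting the coefficient of each $\chi_c$: the pairs $(a,b)\in[e-1]\times[e-1]$ with $a+b\equiv c\pmod e$ are precisely the $e-2$ pairs obtained by letting $a$ range over $\{1,\ldots,e-1\}\setminus\{c\}$, and the resulting inner sum $\omega^{ck}\sum_{a\ne 0,c}\omega^{a(j-k)}$ simplifies via~\eqref{eq:root} to $(e-2)\omega^{cj}$ when $j=k$ and to $-\omega^{cj}-\omega^{ck}$ otherwise. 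This yields $\eta_j\star\eta_j=\eta_j$ and $\eta_j\star\eta_k=-(\eta_j+\eta_k)/(e-2)$. The linear relation $\sum_{j=0}^{e-1}\eta_j=0$ drops out of~\eqref{eq:root} applied coefficient-wise; distinctness and non-vanishing are immediate from inspecting the coefficient of $\chi_1$; and that the $\eta_j$'s span $V_1(H(1,e))$ (with any $e-1$ of them forming a basis) follows from a Vandermonde determinant computation applied to the matrix $(\omega^{aj})$.

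For the classification of nonzero idempotents, I will work in the basis $\{\eta_1,\ldots,\eta_{e-1}\}$. Writing $\xi=\sum_{j=1}^{e-1}c_j\eta_j$ and $S:=\sum_j c_j$, the key collapse is that the cross-sum $\sum_{k\ne j}c_k=S-c_j$ reduces the entire Norton square to
\[ \xi\star\xi=\sum_{j=1}^{e-1}\frac{c_j(ec_j-2S)}{e-2}\,\eta_j. \]
The idempotency condition $\xi\star\xi=\xi$ then forces, for each $j$, the scalar equation $c_j[ec_j-2S-(e-2)]=0$, i.e.\ either $c_j=0$ or $c_j$ equals the common value $c^*:=(2S+e-2)/e$. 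Letting $J:=\{j:c_j=c^*\}$ of size $\ell:=|J|\ge 1$ and substituting $S=\ell c^*$ produces the consistency equation $(e-2\ell)S=\ell(e-2)$, which is inconsistent when $\ell=e/2$ (so no nonzero idempotent arises in that case) and otherwise forces $c^*=(e-2)/(e-2\ell)$. This gives exactly the stated form, and uniqueness follows from the unique expansion of $\xi$ in the basis $\{\eta_1,\ldots,\eta_{e-1}\}$.

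I expect the main obstacle to be the algebraic collapse in the second paragraph: it is not \emph{a priori} obvious that the quadratic idempotency condition decouples into a single scalar equation for each $c_j$, but the antisymmetric shape $\eta_j\star\eta_k=-(\eta_j+\eta_k)/(e-2)$ combined with the identity $\sum_{k\ne j}c_k=S-c_j$ is exactly what makes this happen. A small but necessary edge-case check is the degeneracy $\ell=e/2$ (only possible when $e$ is even), which I will dispose of by direct inspection: the consistency equation then reads $0=\ell(e-2)\ne 0$, ruling out any nonzero idempotent of that shape and confirming that the parametrization $J\subseteq[e-1]$ with $|J|\ne e/2$ exhausts the nonzero idempotents.
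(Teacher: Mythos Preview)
Your proposal is correct and follows essentially the same approach as the paper: both compute $\eta_j\star\eta_k$ by expanding in the $\chi$-basis and invoking \eqref{eq:root}, both use the Vandermonde argument for spanning, and both classify idempotents by writing $\xi=\sum_{j=1}^{e-1}c_j\eta_j$ and reducing the quadratic condition to a single scalar equation per coordinate. Your introduction of $S=\sum_j c_j$ and the closed form $\xi\star\xi=\sum_j\frac{c_j(ec_j-2S)}{e-2}\eta_j$ is a slightly cleaner repackaging of the paper's equation $c_j^2-\sum_{k\ne j}\frac{2c_jc_k}{e-2}=c_j$, and your explicit treatment of the degenerate case $\ell=e/2$ is handled only implicitly in the paper (where the final equation $c(e-2\ell)=e-2$ simply has no solution), but these are cosmetic differences rather than a different route.
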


\begin{proof}
For any $j,k,\ell\in\ZZ_e$ with $\ell\ne0$, the coefficient of $\chi_\ell$ in $\eta_j\star\eta_k$ is
\[ \begin{aligned}
& (\omega^{j}\omega^{(\ell-1)k}+\omega^{2j}\omega^{(\ell-2)k}+\cdots+ \omega^{(e-1)j}\omega^{(\ell+1)k} - \omega^{\ell j}\omega^{(\ell-\ell)k})/(e-2)^2 \\
=& ((\omega^{j-k}+\omega^{2(j-k)}+\cdots+\omega^{(e-1)(j-k)}) \omega^{\ell k} - \omega^{\ell j})/(e-2)^2 \\
=&
\begin{cases}
((e-1)\omega^{\ell j} - \omega^{\ell j})/(e-2)^2 = \omega^{\ell j}/(e-2) & \text{if } j=k \\
- (\omega^{\ell k} + \omega^{\ell j})/(e-2)^2 & \text{if } j\ne k
\end{cases}
\end{aligned} \]
where the second equality follows from Equation~\eqref{eq:root}.
Thus $\eta_j$ is an idempotent for all $j\in \ZZ_e$ and $\eta_j\star\eta_k = -(\eta_j+\eta_k)/(e-2)$ whenever $j\ne k$.
Moreover, the coefficients of $\chi_1,\ldots,\chi_{e-1}$ in $\eta_0,\ldots,\eta_{e-1}$ form a matrix $[ \omega^{jk}]_{0\le j\le e-1,\ 1\le k\le e-1}$ which is a full rank $e\times(e-1)$-submatrix of the $e\times e$ Vandermonde matrix $[ \omega^{jk}]_{j,k=0}^e$.
Thus $\eta_0,\ldots,\eta_{e-1}$ span $V_1(H(1,e))$ and their sum is zero thanks to Equation~\eqref{eq:root}.

Now deleting any element, say $\eta_0$, from the spanning set $\{\eta_0,\ldots,\eta_{e-1}\}$ gives a basis for the Norton algebra $V_1(H(1,e))$ since its dimension is $e-1$.
Thus any element in $V_1(H(1,e))$ can be written uniquely as $c_1\eta_1+\cdots+c_{e-1} \eta_{e-1}$ for some $c_1,\ldots,c_{e-1}\in\CC$.
This element is an idempotent if and only if
\[ \sum_{j=1}^{e-1} c_j^2 \eta_j + \sum_{1\le j\ne k \le e-1} -\frac{c_jc_k(\eta_j+\eta_k)}{e-2} = \sum_{j=1}^{e-1} c_j \eta_j. \]
For $j=1,\ldots,e-1$, taking the coefficient of $\eta_j$ in the above equation gives
\begin{equation}\label{eq:IdempotentV1}
c_j^2 - \sum_{k\ne j} \frac{2c_j c_k}{e-2} = c_j. 
\end{equation}
Suppose that there exist distinct $j,j'\in[e-1]$ such that $c_j\ne0$ and $c_{j'}\ne0$. Then we have
\[ c_j-\sum_{k\ne j} \frac{2c_k}{e-2} = 1 = c_{j'}-\sum_{k\ne j'} \frac{2c_k}{e-2}. \]
This implies  $c_j-c_{j'} + 2(c_j-c_{j'})/(e-2)=0$, that is, $c_j=c_{j'}$.
Hence any nonzero idempotent can be written uniquely as $c(\eta_{j_1}+\cdots+ \eta_{j_\ell})$ for some distinct $j_1,\ldots,j_\ell\in[e-1]$ and some nonzero $c\in\CC$.
Using Equation~\eqref{eq:IdempotentV1} we obtain $c-2(\ell-1)c/(e-2)=1$, so $c=(e-2)/(e-2\ell)$.
\end{proof}

\begin{remark}\label{rem:nilpotents}
In our previous work~\cite{Huang20} we found a real basis $\{\bar v_1,\ldots,\bar v_e\}$ for the Norton algebra $V_1(H(1,e))$; see also Equation~\eqref{eq:V1}.
By Equation~\eqref{eq:root} and Equation~\eqref{eq:idempotents}, for all $j,k\in\ZZ_e$ we have
\[ \eta_j(k) = \frac{1}{e-2}(\omega^j\omega^k + \omega^{2j} \omega^{2k} + \cdots + \omega^{(e-1)j}\omega^{(e-1)k})
= \begin{cases}
\frac{-1}{e-2} & \text{if } j+k\ne0 \\[1ex]
\frac{e-1}{e-2} & \text{if } j+k=0.
\end{cases} \]
Comparing this with our previous work~\cite{Huang20} we have $\eta_j=\bar v_{e-j}\in\RR^X$ for all $j\in\ZZ_e$. 
Furthermore, a similar argument as in the proof of Proposition~\ref{prop:IdempotentsV1} shows that $\eta\in V_1(H(1,e))$ is a \emph{nilpotent element of order $2$}, i.e., $\eta\ne0$ and $\eta^2=0$, if and only if $e=c(\eta_{j_1}+\cdots + \eta_{j_\ell})$ with $\ell=e/2$ for some nonzero scalar $c\in\CC$.
\end{remark}

By Proposition~\ref{prop:IdempotentsV1}, any nonzero idempotent $\eta=\frac{e-2}{e-2\ell}(\eta_{j_1}+\cdots+\eta_{j_\ell})$ in $V_1(H(1,e))$ is determined by its \emph{support} $\supp(\eta):=\{j_1,\ldots,j_\ell\}$ as the coefficient of $\eta_j$ in $\eta$ is the nonzero constant $\frac{e-2}{e-2\ell}$ for all $j\in\supp(\eta)$ and zero for all $j\notin\supp(\eta)$.
In particular, if $|\supp(\eta)|=e-1$ then we have $\eta=-\eta_1-\cdots-\eta_{e-1}=\eta_0$.
We want to show that any automorphism of the Norton algebra $V_1(H(1,e))$ must permute $\eta_0, \eta_1, \ldots, \eta_{e-1}$.
To this end, we study the primitive and orthogonal idempotents in this algebra.

\begin{proposition}\label{prop:PrimitivesV1}
In the Norton algebra $V_1(H(1,e))$ with $e\ge3$, the nonzero idempotents are pairwise nonorthogonal and all primitive, and any  two distinct nonzero idempotents $\eta, \eta'$ give rise to a nonzero idempotent $c(\eta+\eta')$ if and only if
\begin{enumerate}
\item[(i)]
$\supp(\eta)\cap \supp(\eta')=\emptyset$, $\ell=\ell'$, and $c=\frac{e-2\ell}{e-4\ell}$, or
\item[(ii)]
$\supp(\eta)\supseteq\supp(\eta')$, $\ell'=e-\ell$, and $c=\frac{e-2\ell}{3e-4\ell}$.
\end{enumerate}
\end{proposition}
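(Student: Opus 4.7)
The plan is to expand both $\eta\star\eta'$ and $\eta+\eta'$ in the basis $\{\eta_1,\ldots,\eta_{e-1}\}$ of $V_1(H(1,e))$, using the multiplication table $\eta_j\star\eta_j=\eta_j$ and $\eta_j\star\eta_k=-(\eta_j+\eta_k)/(e-2)$ from Proposition~\ref{prop:IdempotentsV1}. Writing $\eta=\frac{e-2}{e-2\ell}\sum_{j\in J}\eta_j$ and $\eta'=\frac{e-2}{e-2\ell'}\sum_{k\in K}\eta_k$ for $J,K\subseteq[e-1]$ of sizes $\ell,\ell'$, a direct reorganization of $\sum_{j\in J,k\in K}\eta_j\star\eta_k$ gives
\[
\eta\star\eta'=\frac{e-2}{(e-2\ell)(e-2\ell')}\Bigl[(e-\ell-\ell')\!\!\sum_{j\in J\cap K}\!\!\eta_j-\ell'\!\!\sum_{j\in J\setminus K}\!\!\eta_j-\ell\!\!\sum_{k\in K\setminus J}\!\!\eta_k\Bigr].
\]
Since $\{\eta_j:j\in[e-1]\}$ is linearly independent and $J\ne K$ whenever $\eta\ne\eta'$, one of $J\setminus K$ or $K\setminus J$ is nonempty, and the attached coefficient $-\ell'$ or $-\ell$ is nonzero, so $\eta\star\eta'\ne0$. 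This proves pairwise nonorthogonality, and primitivity is then immediate: any decomposition of a nonzero idempotent as a sum of two nonzero orthogonal idempotents would contradict the pairwise nonorthogonality just established.

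For the classification in~(i)/(ii), I would expand $c(\eta+\eta')$ in the same basis to read off three coefficient types: $cA$ on $J\cap K$, $cB$ on $J\setminus K$, and $cC$ on $K\setminus J$, where $B=\frac{e-2}{e-2\ell}$, $C=\frac{e-2}{e-2\ell'}$, and $A=B+C$. By the uniqueness part of Proposition~\ref{prop:IdempotentsV1}, $c(\eta+\eta')$ is a nonzero idempotent precisely when all nonzero coefficients agree on the common value $\frac{e-2}{e-2\ell''}$, with $\ell''$ the number of nonzero positions. If $J\cap K=\emptyset$, only $B$ and $C$ appear, equality forces $\ell=\ell'$ and $\ell''=2\ell$, and matching $cB=\frac{e-2}{e-4\ell}$ yields $c=\frac{e-2\ell}{e-4\ell}$, exactly condition~(i). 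If $J\cap K\ne\emptyset$, then $A$ is present; requiring $A=B$ or $A=C$ would force $C=0$ or $B=0$, both impossible, so the only remaining possibility is $A=0$, equivalently $B+C=0$, equivalently $\ell+\ell'=e$. The contributions on $J\cap K$ then drop out, leaving $cB$ on $J\setminus K$ and $cC=-cB$ on $K\setminus J$; these two distinct values cannot simultaneously be present and equal, so one of $J\setminus K,K\setminus J$ is empty, i.e., one support contains the other. Assuming $\supp(\eta)\supseteq\supp(\eta')$, one has $\ell''=\ell-\ell'=2\ell-e$, and matching $cB=\frac{e-2}{3e-4\ell}$ yields $c=\frac{e-2\ell}{3e-4\ell}$, exactly condition~(ii).

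The main subtlety is noticing that the ``diagonal'' coefficient $A=B+C$ can vanish — exactly when $\ell+\ell'=e$ — producing the second configuration~(ii), which is easily overlooked by an analysis that assumes $A\ne 0$. This vanishing is the arithmetic shadow of the single linear relation $\eta_0+\eta_1+\cdots+\eta_{e-1}=0$ that makes the $e$ elements $\eta_0,\ldots,\eta_{e-1}$ an overcomplete frame for $V_1(H(1,e))$. A little extra care is needed with the denominators $e-4\ell$ in~(i) and $3e-4\ell$ in~(ii): at the exceptional values $\ell=e/4$ and $\ell=3e/4$ one has $\ell''=e/2$, at which point $c(\eta+\eta')$ is a nilpotent of order~$2$ in the sense of Remark~\ref{rem:nilpotents} rather than an idempotent, consistent with the implicit non-vanishing hypothesis built into the displayed formulas for $c$.
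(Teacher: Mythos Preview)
Your proof is correct and follows essentially the same approach as the paper: both expand $\eta+\eta'$ (and, for nonorthogonality, $\eta\star\eta'$) in the basis $\{\eta_1,\ldots,\eta_{e-1}\}$, identify the three coefficient types on $J\cap K$, $J\setminus K$, $K\setminus J$, and use the uniqueness part of Proposition~\ref{prop:IdempotentsV1} to force the case split. Your treatment is slightly more explicit (you write down the full formula for $\eta\star\eta'$ rather than a single coefficient) and you add a nice remark on the degenerate denominators $e-4\ell$ and $3e-4\ell$, but the logical skeleton is the same.
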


\begin{proof}
Let $\eta$ and $\eta'$ be distinct nonzero idempotents in $V_1(H(1,e))$ with $|\supp(\eta)|=\ell$ and $|\supp(\eta')|=\ell'$.
We may assume that there exists $k\in \supp(\eta)-\supp(\eta')$, without loss of generality.
By Proposition~\ref{prop:IdempotentsV1}, the coefficient of $\eta_k$ in $\eta\star\eta'$ is
\[ -\frac{\ell'}{e-2}\frac{e-2}{e-2\ell}\frac{e-2}{e-2\ell'} = \frac{-\ell'(e-2)}{(e-2\ell)(e-2\ell')}\ne0. \]
This shows that $\eta$ and $\eta'$ are nonorthogonal.
Thus every nonzero idempotent is primitive since it cannot be written as the sum of two orthogonal nonzero idempotents.

Suppose that $c(\eta+\eta')$ is also a nonzero idempotent for some $c\in\CC$.
For each $j\in[e-1]$, the coefficient of $\eta_j$ in $\eta+\eta'$ is
\[ \begin{cases}
\frac{e-2}{e-2\ell} & \text{if } j\in \supp(\eta)-\supp(\eta') \\[1ex]
\frac{e-2}{e-2\ell'} & \text{if } j\in \supp(\eta')-\supp(\eta) \\[1ex]
\frac{e-2}{e-2\ell} + \frac{e-2}{e-2\ell'} & \text{if } j\in \supp(\eta)\cap\supp(\eta')
\end{cases} \]
The first case occurs by our hypothesis on $\eta$ and $\eta'$.
Assume that the second case also occurs.
Since the coefficients in these two cases are nonzero, they must coincide, i.e., $\ell=\ell'$ by Proposition~\ref{prop:IdempotentsV1}.
Then $\supp(\eta)\cap \supp(\eta')=\emptyset$ since the coefficient in the third case is nonzero but distinct from the coefficient in the first two cases.
It follows that the support of the idempotent $c(\eta+\eta')$ has cardinality $4\ell$ and Proposition~\ref{prop:IdempotentsV1} implies $\frac{c(e-2)}{e-2\ell}=\frac{e-2}{e-4\ell}$, i.e., $c=\frac{e-2\ell}{e-4\ell}$.

Now assume that the second case does not occur.
Then the third case must occur since $\supp(\eta')\ne\emptyset$.
As the coefficient in the third case differs from the first, we have $\frac{e-2}{e-2\ell} + \frac{e-2}{e-2\ell'}=0$, i.e., $\ell'=e-\ell$ by Proposition~\ref{prop:IdempotentsV1}.
Then the support of $c(\eta+\eta')$ has $\ell-\ell'=2\ell-e$ elements and thus $\frac{c(e-2)}{e-2\ell} = \frac{e-2}{e-2(2\ell-e)}$, i.e., $c=\frac{e-2\ell}{3e-4\ell}$.
\end{proof}

\begin{proposition}\label{prop:AutV1H1}
For $e\ge3$, the automorphism group of the Norton algebra $V_1(H(1,e))$ is isomorphic to the symmetric group $\SS_e$.
\end{proposition}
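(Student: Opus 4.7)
The plan is to realize $\Aut(V_1(H(1,e)))$ as $\SS_e$ acting on the canonical frame of idempotents $\{\eta_0,\eta_1,\dots,\eta_{e-1}\}$ from Proposition~\ref{prop:IdempotentsV1}. For the embedding $\SS_e \hookrightarrow \Aut(V_1(H(1,e)))$, for each $\sigma \in \SS_e$ (viewed as a permutation of $\ZZ_e$) I would define a linear map $\phi_\sigma$ by $\phi_\sigma(\eta_j) := \eta_{\sigma(j)}$ for all $j \in \ZZ_e$. Since $\{\eta_1,\dots,\eta_{e-1}\}$ is a basis for $V_1(H(1,e))$ (with $\eta_0 = -\eta_1-\cdots-\eta_{e-1}$) and the relation $\sum_{j\in\ZZ_e}\eta_j=0$ is permutation-invariant, this is well-defined and consistent. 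The multiplication table $\eta_j^2=\eta_j$ and $\eta_j \star \eta_k = -(\eta_j+\eta_k)/(e-2)$ for $j \ne k$ is manifestly invariant under relabeling the index set $\ZZ_e$, so each $\phi_\sigma$ is an algebra automorphism, and $\sigma \mapsto \phi_\sigma$ is an injective homomorphism because the $\eta_j$'s are distinct.

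The heart of the proof is an intrinsic characterization of the frame: a nonzero idempotent $\eta$ of $V_1(H(1,e))$ equals some $\eta_j$ if and only if $\eta \star \eta' \in \mathrm{span}(\eta,\eta')$ for every nonzero idempotent $\eta'$. The ``only if'' direction follows from a direct computation via Proposition~\ref{prop:IdempotentsV1}: for $\eta' = \frac{e-2}{e-2\ell'}(\eta_{k_1}+\cdots+\eta_{k_{\ell'}})$ and any $j \in \ZZ_e$, expanding $\eta_j \star \eta'$ using the multiplication formula yields
\[ \eta_j \star \eta' = \begin{cases} \frac{e-\ell'}{e-2\ell'}\eta_j - \frac{1}{e-2}\eta' & \text{if } j \in \{k_1,\dots,k_{\ell'}\}, \\[0.5ex] \frac{-\ell'}{e-2\ell'}\eta_j - \frac{1}{e-2}\eta' & \text{otherwise}, \end{cases} \]
which is visibly in $\mathrm{span}(\eta_j,\eta')$; note that $j=0$ always falls into the second case since $\{k_1,\dots,k_{\ell'}\} \subseteq [e-1]$.

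The main obstacle is the converse direction. For $e\in\{3,4\}$ every nonzero idempotent is already some $\eta_j$ by dimension count, so assume $e \ge 5$. By Remark~\ref{rem:nilpotents}, any nonzero idempotent $\eta \ne \eta_j$ corresponds in the evaluation picture to a ``spike'' set $T \subseteq \ZZ_e$ with $2 \le |T| \le e-2$ and two values $\alpha \ne \beta$ satisfying $\alpha+\beta=1$. Choosing $\eta'$ with spike set $T' = \{t,t'\}$ for some $t \in T$ and $t' \notin T$ (valid since $|T'|=2 \ne e/2$ for $e \ge 5$), the four pieces $T \cap T',\ T \setminus T',\ T' \setminus T,\ (T \cup T')^c$ are all nonempty. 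Equating $\eta \star \eta' = c\eta+c'\eta'$ piecewise yields four equations; the first three force $c=\alpha'$ and $c'=\alpha$, and the fourth then reduces to $(\alpha-\beta)(\alpha'-\beta')=0$, contradicting $\alpha \ne \beta$ and $\alpha' \ne \beta'$. Hence $\eta \star \eta' \notin \mathrm{span}(\eta,\eta')$, as required.

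With the characterization established, any algebra automorphism $\phi$ must permute $\{\eta_0,\dots,\eta_{e-1}\}$, inducing a permutation $\sigma \in \SS_e$ with $\phi(\eta_j) = \eta_{\sigma(j)}$ for all $j$. Since $\phi$ agrees with $\phi_\sigma$ on the basis $\{\eta_1,\dots,\eta_{e-1}\}$, we conclude $\phi=\phi_\sigma$, completing the isomorphism $\Aut(V_1(H(1,e))) \cong \SS_e$.
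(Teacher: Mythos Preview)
Your proof is correct and takes a genuinely different route from the paper's. Both arguments reduce to showing that any automorphism must permute the frame $\{\eta_0,\ldots,\eta_{e-1}\}$, but they certify this differently. The paper relies on Proposition~\ref{prop:PrimitivesV1}: for distinct $j,j'$ and $e\ne4$, the element $\frac{e-2}{e-4}(\eta_j+\eta_{j'})$ is a nonzero idempotent, so its image under $\phi$ is too, and the classification of idempotent pairs $(\eta,\eta')$ with $c(\eta+\eta')$ idempotent forces $|\supp\phi(\eta_j)|,|\supp\phi(\eta_{j'})|\in\{1,e-1\}$; the case $e=4$ is handled separately via nilpotents of order~$2$. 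Your approach instead gives an intrinsic characterization of the frame---a nonzero idempotent $\eta$ lies in $\{\eta_j\}$ if and only if $\eta\star\eta'\in\vspan(\eta,\eta')$ for every nonzero idempotent $\eta'$---and verifies it in the evaluation picture supplied by Remark~\ref{rem:nilpotents}. This avoids the awkward $e=4$ bifurcation (your split $e\le4$ versus $e\ge5$ is handled by the trivial observation that for $e\in\{3,4\}$ every nonzero idempotent is already some $\eta_j$), and the four-region argument on $T\cap T'$, $T\setminus T'$, $T'\setminus T$, $(T\cup T')^c$ is clean. The trade-off is that the paper's route reuses Proposition~\ref{prop:PrimitivesV1}, which is needed anyway for Theorem~\ref{thm:AutV1}, whereas your characterization is bespoke to this proposition; on the other hand, your span-closure criterion is arguably the more natural invariant and would transfer to other settings where one wants to single out a canonical frame of idempotents.
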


\begin{proof}
By Proposition~\ref{prop:IdempotentsV1}, any permutation of the idempotents $\eta_0,\ldots,\eta_{e-1}$ gives an automorphism of the Norton algebra $V_1(H(1,e))$.
It remains to show that any automorphism $\phi$ of the Norton algebra $V_1(H(1,e))$ permutes $\eta_0,\ldots,\eta_{e-1}$.

For any distinct $j,j'\in[e-1]$, we have two nonzero idempotents $\eta:=\phi(\eta_j)$ and $\eta':=\phi(\eta_{j'})$ with $|\supp(\eta)|=\ell$ and $|\supp(\eta')|=\ell'$.
First assume $e\ne 4$, so $c(\eta_j+\eta_{j'})$ is a nonzero idempotent, where $c:=\frac{e-2}{e-4}$.
Then $\phi$ takes it to $c(\eta+\eta')$, which is also a nonzero idempotent.
By Proposition~\ref{prop:PrimitivesV1}, we have $\ell=\ell'=1$ or $\{\ell,\ell'\}=\{1,e-1\}$.
Note that an idempotent with support of cardinality $e-1$ is $-(\eta_1+\cdots+\eta_{e-1})=\eta_0$ by Proposition~\ref{prop:IdempotentsV1}.
Thus $\eta=\eta_k$ and $\eta'=\eta_{k'}$ for some distinct $k,k'\in\ZZ_e$.
This implies that $\phi$ permutes $\eta_0,\ldots,\eta_{e-1}$.

Now assume $e=4$.
By Remark~\ref{rem:nilpotents}, the nilpotent elements of order $2$ in $V_1(H(1,4))$ are all nonzero scalar multiple of $\eta_j+\eta_{j'}$ for distinct $j,j'\in\{1,2,3\}$. 
Then $\eta+\eta'$ is also a nilpotent element of order $2$.
Similarly to Proposition~\ref{prop:PrimitivesV1}, we can show that $|\supp(\eta)|=|\supp(\eta')|=1$ or $\{|\supp(\eta)|, |\supp(\eta')|\}=\{1,3\}$.
Thus $\phi$ permutes $\eta_0,\eta_1,\eta_2,\eta_3$.
\end{proof}

\begin{remark}
Given positive integers $n$ and $k$ with $n\ge 2k$, the Johnson graph $J(n,k)$ is a distance regular graph of diameter $n$ with vertex set $X$ consisting of all $k$-subsets of $[n]$ and edge set $E=\{xy: x,y\in X,\ |x\cap y|=1\}$.
By Maldonado and Penazzi~\cite{NortonAlgebra} (see also our work~\cite{Huang20}), the Norton algebra $V_1(J(n,k))$ has a basis $\{\bar v_1,\ldots,\bar v_{n-1}\}$ satisfying $\bar v_i\star\bar v_j=0$ for all $i,j\in[n-1]$ when $n=2k$ or $\bar v_i\star\bar v_i=\bar v_i$ and $\bar v_i\star\bar v_j = -(\bar v_i+\bar v_j)/(n-2)$ for all distinct $i,j\in[n-1]$ when $n>2k$.
Thus the automorphism group of the algebra $V_1(J(n,k))$ is the general linear group $\mathrm{GL}_{n-1}$ when $n=2k$ or the symmetric group $\SS_n$ since $V_1(J(n,k))\cong V_1(H(1,n))$ when $n>2k$.
Also note that $H(1,n)$ is a complete graph with $n$ vertices, whose automorphism group is $\SS_n$.
\end{remark}

\begin{theorem}\label{thm:AutV1}
The automorphism group of the Norton algebra $V_i(H(n,e))$ is isomorphic to $\SS_e\wr\SS_n$ if $i=1$ and $e\ge3$, or $\SS_3\wr\SS_{2^{n-1}}$ if $i=n$ and $e=3$.
\end{theorem}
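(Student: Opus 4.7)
The plan is to leverage the direct product decompositions from Corollary~\ref{cor:V1Hn} and Proposition~\ref{prop:VnHn}: $V_1(H(n,e))$ decomposes as a direct product of $n$ copies of $V_1(H(1,e))$, and $V_n(H(n,3))$ as a direct product of $2^{n-1}$ copies of $V_1(H(1,3))$. By Proposition~\ref{prop:AutV1H1}, the automorphism group of each factor is $\SS_e$ (resp.\ $\SS_3$). This immediately yields the inclusions $\SS_e \wr \SS_n \hookrightarrow \Aut(V_1(H(n,e)))$ and $\SS_3 \wr \SS_{2^{n-1}} \hookrightarrow \Aut(V_n(H(n,3)))$, where the symmetric group permutes the factors and each coordinate acts on the corresponding factor. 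It remains to show these inclusions are equalities.

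The key step is to show that the direct product decomposition is canonical, in the sense that every algebra automorphism permutes the direct factors. To this end, consider the graph $\mathcal{G}$ whose vertex set is the collection of nonzero idempotents of the algebra and whose edges join pairs of nonorthogonal idempotents. By Proposition~\ref{prop:PrimitivesV1}, all nonzero idempotents within a single factor are pairwise nonorthogonal and thus form a clique in $\mathcal{G}$; conversely, any two nonzero idempotents lying in distinct direct factors are orthogonal by the direct product structure (see Corollary~\ref{cor:Hamming} in the first case and Proposition~\ref{prop:VnHn} in the second). To conclude that the connected components of $\mathcal{G}$ correspond exactly to the direct factors, one must verify that every nonzero idempotent $\eta$ of the big algebra is a sum of idempotents drawn from the individual factors: writing $\eta=\sum_k \eta^{(k)}$ with $\eta^{(k)}$ in the $k$th direct factor, the orthogonality of distinct factors reduces $\eta\star\eta$ to $\sum_k \eta^{(k)}\star\eta^{(k)}$, so $\eta=\eta\star\eta$ forces $\eta^{(k)}=\eta^{(k)}\star\eta^{(k)}$ componentwise.

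It follows that any algebra automorphism $\phi$, which must permute nonzero idempotents and preserve orthogonality, must permute the connected components of $\mathcal{G}$, and hence permute the direct factors themselves, since each direct factor is spanned by its nonzero idempotents $\eta_0,\ldots,\eta_{e-1}$ via Proposition~\ref{prop:IdempotentsV1}. The restriction of $\phi$ to each direct factor is then an automorphism of $V_1(H(1,e))$, which by Proposition~\ref{prop:AutV1H1} is realized by an element of $\SS_e$ (or $\SS_3$). Assembling the permutation of factors with the automorphisms of individual factors yields an element of $\SS_e\wr\SS_n$ (respectively $\SS_3\wr\SS_{2^{n-1}}$), giving the reverse inclusion.

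The main obstacle is ruling out exotic automorphisms that might fail to respect the direct factor decomposition. This is settled by the canonicity argument above, which ultimately rests on the strong property of $V_1(H(1,e))$ established in Proposition~\ref{prop:PrimitivesV1}: \emph{every} pair of nonzero idempotents within a single factor is nonorthogonal. Without this cliquishness, distinct factors could not be distinguished from the idempotent structure alone, and the reverse inclusion would break down.
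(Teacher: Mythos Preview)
Your overall strategy---use the direct-product decomposition, classify idempotents, and show every automorphism permutes the factors---is exactly right and matches the paper's approach. However, there is a genuine gap in your canonicity argument.

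You define $\mathcal{G}$ on \emph{all} nonzero idempotents, with edges for nonorthogonality, and claim its connected components are the direct factors. But $\mathcal{G}$ is connected whenever $n\ge 2$ (or $2^{n-1}\ge 2$). Indeed, take nonzero idempotents $\eta^{(1)}\in V_{\{1\}}$ and $\eta^{(2)}\in V_{\{2\}}$; they are orthogonal, so $\eta:=\eta^{(1)}+\eta^{(2)}$ is again a nonzero idempotent, and
\[
\eta\star\eta^{(1)}=\eta^{(1)}\star\eta^{(1)}=\eta^{(1)}\ne 0,\qquad
\eta\star\eta^{(2)}=\eta^{(2)}\star\eta^{(2)}=\eta^{(2)}\ne 0.
\]
Thus $\eta$ is adjacent in $\mathcal{G}$ to idempotents in both factors, collapsing all factors into a single component. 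Your conclusion that automorphisms permute the components of $\mathcal{G}$ therefore says nothing about permuting the factors.

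The paper repairs this by working with \emph{primitive} idempotents rather than all nonzero ones. Given $\eta=\sum_j \eta^{(j)}$, if at least two $\eta^{(j)}$ are nonzero then $\eta$ is visibly imprimitive; conversely, if only one $\eta^{(j)}$ is nonzero then Proposition~\ref{prop:PrimitivesV1} (no two nonzero idempotents in a single factor are orthogonal) forces $\eta$ to be primitive. Hence the primitive idempotents are exactly those lying in a single factor, and two primitive idempotents are orthogonal if and only if they lie in distinct factors. Since automorphisms preserve primitivity and orthogonality, they must permute the factors. Replacing ``nonzero idempotent'' by ``primitive idempotent'' throughout your argument makes it go through.
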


\begin{proof}
By Corollary~\ref{cor:Hamming}, the Norton algebra $V_1(H(n,e))$ is isomorphic to the direct product of its subalgebras $V_S$ for all 1-sets $S=\{k\}\subseteq[n]$, i.e., $V_1(H(n,e))\cong V_{\{1\}}\times\cdots\times V_{\{n\}}$.
Thus any idempotent in $V_1(H(n,e))$ can be expressed as $\eta=\eta^{(1)}+\cdots+\eta^{(n)}$ with $\eta^{(j)}$ being an idempotent in $V_{\{j\}}$ for all $j\in[n]$.
By Proposition~\ref{prop:PrimitivesV1}, the idempotent $\eta$ is primitive if and only if all but one of $\eta^{(1)}, \ldots,\eta^{(n)}$ are zero.
Moreover, two primitive idempotents in $V_1(H(n,e))$ are orthogonal if and only if they are from $V_{\{j\}}$ and $V_{\{k\}}$ for some distinct $j,k\in[n]$ by Proposition~\ref{prop:PrimitivesV1}.
Thus any automorphism $\phi$ of the Norton algebra $V_1(H(n,e))$ must permute the subalgebras $V_{\{1\}}, \ldots, V_{\{n\}}$. 
It follows from this and Proposition~\ref{prop:AutV1H1} that the automorphism group of $V_1(H(n,e))$ is the wreath product $\SS_e\wr\SS_n$.
The result on $V_n(H(n,3))$ is similar, thanks to Proposition~\ref{prop:VnHn}.
\end{proof}

\begin{remark}\label{rmk:AutV2H3}
The above theorem shows that the automorphism group of a Norton algebra may agree with the automorphism group of the underlying graph in some cases but could be much larger in some other cases.
It is also possible to have the former smaller than the latter. 
In fact, one can check that the nonzero idempotents in the Norton algebra $V_2(Q_3)$ (see Example~\ref{example:H32}) are $\chi_{12}+\chi_{13}+\chi_{23}$, $\chi_{12}-\chi_{13}-\chi_{23}$, $-\chi_{12}+\chi_{13}-\chi_{23}$, and $-\chi_{12}-\chi_{13}+\chi_{23}$.
Thus its automorphism group is a subgroup of $\SS_4$, and it actually equals $\SS_4$ since it has a subgroup isomorphic to $\SS_3^B/\{\pm1\}$ by Corollary~\ref{cor:AutH2}, whose order is $2^3\cdot3!/2=24=|\SS_4|$.
This is smaller than the automorphism group $\SS_2\wr\SS_3$ of the graph $Q_3$, whose order is $2^3\cdot3!=48$.
It would be nice to generalize this example to the Norton algebra $V_i(H(n,e))$ with $i\ge2$. 
\end{remark}

\section{Nonassociativity}\label{sec:NonAssoc}

In this section we study the nonassociativity of the Norton product $\star$ on each eigenspace $V_i$ the Hamming graph $H(n,e)$ based on 
the results in Section~\ref{sec:Hamming}.

Given a binary operation $*$ defined on a set $Z$, define $C_{*,m}$ to be the number of distinct results that one can obtain from the expression $z_0*z_1*\cdots*z_m$ by inserting parentheses in all possible ways, where $z_0, z_1, \ldots,z_m$ are indeterminates taking values from $Z$.
We have $C_{*,m}\ge1$ and the equality holds for all $m\ge0$ if and only if $*$ is associative.
On the other hand, it is well known that the number of ways to insert parentheses into the expression $z_0*z_1*\cdots*z_m$ is the ubiquitous \emph{Catalan number} $C_m:=\frac{1}{m+1}\binom{2m}{m}$, giving an upper bound for $C_{*,m}$.
If $C_{*,m}=C_m$ for all $m\ge0$ then $*$ is said to be \emph{totally nonassociative}.
In general, the number $C_{*,m}$ is between $1$ and $C_m$ and can be viewed as a quantitative measure for how far the operation $*$ is from being associative or totally nonassociative~\cite{AssociativeSpectra1, CatMod}

There is a natural bijection between the ways to insert parentheses into $z_0*z_1*\cdots*z_m$ and binary trees with $m+1$ leaves.
Here a \emph{binary tree} is a rooted plane tree in which every node has exactly two children except the leaves.
Let $\T_m$ be the set of all binary trees with $m+1$ leaves.
Any $t\in T_m$ naturally corresponds to a parenthesization of $z_0*z_1*\cdots*z_m$, and we let $(z_0*z_1*\cdots*z_m)_t$ to denote the result. 
With the leaves of $t$ labeled $0,1,\ldots,m$ from left to right, the \emph{depth sequence} of $t$ is $d(t):=(d_0(t), d_1(t), \ldots, d_m(t))$, where the \emph{depth} $d_j(t)$ of a leaf $j$ in $T$ is the number of steps in the unique path from the root of $t$ to the leaf $j$.
See Figure~\ref{fig:parenthesization} for some examples.

\begin{figure}[h]
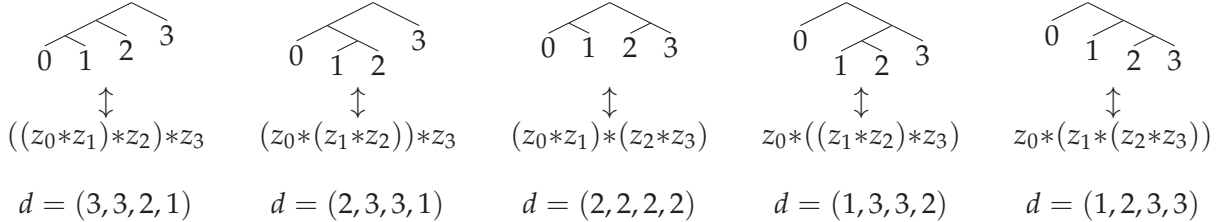

\[\begin{array}{ccccc}
\Tree [.  [. [. 0 1 ] 2 ] 3 ] &
\Tree [.  [. 0 [. 1 2 ] ] 3 ] &
\Tree [.  [. 0 1 ] [. 2 3 ] ] &
\Tree [. 0 [. [. 1 2 ] 3 ] ] &
\Tree [. 0  [. 1 [. 2 3 ] ] ] \\
\updownarrow & \updownarrow & \updownarrow & \updownarrow & \updownarrow \\
\rule{5pt}{0pt}((z_0 { * } z_1) { * } z_2) { * } z_3\rule{5pt}{0pt} &
\rule{5pt}{0pt}(z_0 { * } (z_1{ * } z_2)) { * } z_3 \rule{5pt}{0pt} &
\rule{5pt}{0pt}(z_0 { * } z_1) { * } (z_2 { * } z_3)\rule{5pt}{0pt} &
\rule{5pt}{0pt}z_0 { * } ((z_1 { * } z_2) { * } z_3)\rule{5pt}{0pt} &
\rule{5pt}{0pt}z_0 { * } (z_1 { * } (z_2 { * } z_3))\rule{5pt}{0pt} \\ \\
d = (3,3,2,1) & d = (2,3,3,1) & d = (2,2,2,2) & d = (1,3,3,2) & d = (1,2,3,3) \\
\end{array} \]
\caption{Parenthesizations, binary trees, and leaf depths}\label{fig:parenthesization}
\end{figure}

Two binary trees $s,t\in\T_m$ are \emph{$*$-equvalent} if $(z_0*z_1*\cdots*z_m)_s = (z_0*z_1*\cdots*z_m)_t$.
The number of equivalence classes in $\T_m$ is exactly the nonassociativity measure $C_{*,m}$.
Two binary operations are said to be \emph{equally nonassociative} if their corresponding equivalence relations agree on $\T_m$ for all $m\ge0$.

Previously, the author, Mickey, and Xu~\cite{DoubleMinus} defined the \emph{double minus operation} $a\ominus b:= -a-b$ for all $a,b\in \CC$ and discovered that the nonassociativity measure $C_{\ominus,m}$ agrees with an interesting sequence A000975~\cite{A000975} in OEIS~\cite{OEIS}; see also Cs\'{a}k\'{a}ny and Waldhauser~\cite{AssociativeSpectra1}.

\begin{theorem}[\cite{DoubleMinus}]\label{thm:ominus}
For any $s,t\in\T_m$, we have $(z_0\ominus z_1\ominus \cdots \ominus z_m)_s = (z_0\ominus z_1\ominus \cdots \ominus z_m)_t$ if and only if $d(s)\equiv d(t) \pmod 2$, i.e., $d_j(s) \equiv d_j(t) \pmod 2$ for all $j=0,1,\ldots,m$.
Moreover, the nonassociativity measure $C_{\ominus, m}$ is given by the the sequence A000975 in OEIS except for $m=0$. 
\end{theorem}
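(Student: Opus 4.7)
The plan is to reduce the equivalence criterion to an explicit coefficient formula, and then count the resulting sign patterns to extract $C_{\ominus,m}$. First I would prove by induction on the number of leaves that for every $t\in\T_m$,
\[ (z_0\ominus z_1\ominus \cdots \ominus z_m)_t = \sum_{j=0}^m (-1)^{d_j(t)} z_j. \]
The base case $m=0$ is trivial. For the inductive step, write $t=t'\ominus t''$ where $t'$ carries leaves $0,\ldots,k$ and $t''$ carries leaves $k+1,\ldots,m$. Applying the definition $a\ominus b=-a-b$ at the root together with the inductive hypothesis on each subtree gives a signed sum with coefficients $-(-1)^{d_j(t')}$ and $-(-1)^{d_j(t'')}$; since attaching a subtree as a child of a new root increments every depth by $1$, these coefficients are exactly $(-1)^{d_j(t)}$. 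Because $z_0,\ldots,z_m$ are algebraically independent, two trees $s,t\in\T_m$ yield equal evaluations if and only if $(-1)^{d_j(s)}=(-1)^{d_j(t)}$ for every $j$, which is the stated mod-$2$ criterion.

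For the second assertion, let $N_m$ be the number of distinct parity sequences $(d_0(t)\bmod 2,\ldots,d_m(t)\bmod 2)$ realized as $t$ ranges over $\T_m$, so by the first part $N_m=C_{\ominus,m}$. Direct computation gives $N_0=N_1=1$, $N_2=2$, $N_3=5$, $N_4=10$, matching A000975 for $m\ge1$ (with the exception at $m=0$, where $C_{\ominus,0}=1\ne 0=a_0$). My plan is to match $N_m$ to the A000975 recurrence $a_m=2a_{m-1}+[m\text{ odd}]$. The natural decomposition $t=t'\ominus t''$ does \emph{not} give a clean product structure on parity sequences, since different decompositions can collapse to the same sequence; a Catalan-style recurrence overcounts (producing $14$, not $10$, for $m=4$). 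Controlling this collapse is the main obstacle.

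To overcome it I would either stratify the realized parity sequences by a canonical top-level split—for instance, the smallest index $k$ for which the flipped prefix of length $k+1$ and the flipped suffix of length $m-k$ are each individually realizable as tree parity sequences—or characterize the realizable sign vectors $\epsilon\in\{\pm1\}^{m+1}$ by a short local rule (such as forbidding certain short subwords of $\epsilon$) and then enumerate them via a transfer-matrix or generating-function argument. Either route should yield a recurrence for $N_m$; comparison with the A000975 recurrence and a direct check of the boundary case $m=0$ then complete the proof. The easy conceptual content is entirely contained in the inductive coefficient formula; the genuine work lies in the enumeration step.
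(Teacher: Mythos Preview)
The paper does not prove this theorem: it is quoted from \cite{DoubleMinus} (and also attributed to \cite{AssociativeSpectra1}) and stated without proof, so there is no in-paper argument to compare against.

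That said, your proposal is only half a proof. The inductive identity $(z_0\ominus\cdots\ominus z_m)_t=\sum_j(-1)^{d_j(t)}z_j$ and the resulting mod-$2$ criterion are correct and cleanly argued. For the enumeration of $C_{\ominus,m}$, however, you explicitly stop short: you observe that the naive Catalan recursion overcounts, and you sketch two possible repair strategies (a canonical-split stratification, or a forbidden-subword characterization of realizable sign vectors followed by a transfer-matrix count) without carrying either one out. Neither sketch is yet an argument---you have not specified which subwords are forbidden, nor shown that a canonical split exists and behaves well under the recursion---so as written the second assertion remains unproven. If you want a self-contained proof, the forbidden-pattern route is probably the cleaner one: one can show that a sign vector $\epsilon\in\{\pm1\}^{m+1}$ is realizable if and only if $\epsilon_0=\epsilon_m$ and no two consecutive entries are both $+1$ (equivalently, the parity sequence has no two consecutive even depths and starts and ends odd), and then the count follows from a two-state automaton. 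You should verify and write out such a characterization rather than leave it as a plan.
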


Now we turn to the Norton algebras of the Hamming graph $H(n,e)$, which are related to the double minus operation in certain cases.
The Norton algebra $V_0(H(n,e))$ is one dimensional and must be associative. 
In recent work~\cite{Huang20} we obtained the following result on the nonassociativity of the Norton algebra $V_1(H(n,e))$.

\begin{proposition}[\cite{Huang20}]\label{prop:V1}
The Norton product $\star$ on $V_1(H(n,e))$ is associative if $e=2$, equally as nonassociative as the double minus operation $\ominus$ if $e=3$, and totally nonassociative if $e\ge4$.
\end{proposition}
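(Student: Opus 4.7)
The plan begins by reducing everything to $n=1$ via the algebra isomorphism $V_1(H(n,e))\cong V_1(H(1,e))^n$ from Corollary~\ref{cor:V1Hn}. In a direct product of algebras, two parenthesizations of a formal expression agree as multilinear maps if and only if they agree in each factor; since the factors are all isomorphic, the $\star$-equivalence classes on $\T_m$ coincide for $V_1(H(n,e))$ and $V_1(H(1,e))$. The latter has basis $\{\chi_a: a\in\ZZ_e^\times\}$ with $\chi_a\star\chi_b=\chi_{a+b\bmod e}$ when $a+b\not\equiv 0\pmod e$ and $\chi_a\star\chi_b=0$ otherwise (Theorem~\ref{thm:NortonHamming}). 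For $e=2$ this yields $\chi_1\star\chi_1=0$, so the product is identically zero and $\star$ is trivially associative.

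For $e=3$, introduce the componentwise product $\cdot$ in the basis $\{\chi_1,\chi_2\}$ of $V_1(H(1,3))$ (so $\chi_a\cdot\chi_b$ is $\chi_a$ if $a=b$ and $0$ otherwise) and the linear involution $\sigma$ swapping $\chi_1$ and $\chi_2$. A direct check confirms $\sigma$ is an automorphism of $(V_1(H(1,3)),\cdot)$ and $z\star w=\sigma(z\cdot w)$. Induction on $m$, using the multiplicativity of $\sigma$ and that depths in a parent tree exceed those in its subtrees by one, then gives
\[ (z_0\star z_1\star\cdots\star z_m)_t=\sigma^{d_0(t)}(z_0)\cdot\sigma^{d_1(t)}(z_1)\cdots\sigma^{d_m(t)}(z_m). \]
Expanding in the basis produces a pair of monomials (the $\chi_1$- and $\chi_2$-coefficients) in the coordinates of the $z_j$'s whose shape is determined by, and determines, the parity vector $(d_j(t)\bmod 2)_{j=0}^m$. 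Hence $s$ and $t$ are $\star$-equivalent iff $d_j(s)\equiv d_j(t)\pmod 2$ for all $j$, matching $\ominus$-equivalence by Theorem~\ref{thm:ominus}.

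For $e\ge 4$ the goal is total nonassociativity. With basis inputs $z_j=\chi_{a_j}$, an easy induction shows $(z_0\star\cdots\star z_m)_t$ equals $\chi_{a_0+\cdots+a_m}$ if every interval sum $\sum_{k\in[i,j]}a_k$ over an internal-node interval $[i,j]$ of $t$ is nonzero mod $e$, and $0$ otherwise. Since the total sum does not depend on the tree, distinct trees $s$ and $t$ produce different outputs precisely when some choice of $(a_k)\in(\ZZ_e^\times)^{m+1}$ forces a zero interval sum for one tree but not for the other. Given $s\neq t$, pick $[i,j]$ in the symmetric difference of the internal-node interval sets; starting from $a_k=1$ for all $k$ and perturbing the two coordinates $a_i$ and $a_j$ lets us impose the single linear equation $a_i+a_j\equiv 2-(j-i+1)\pmod e$, which kills the sum over $[i,j]$ and leaves a one-parameter family in $\ZZ_e^\times$ to dodge the finitely many forbidden residues imposed by the other internal-node intervals of $s$ and $t$.

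The hard part will be verifying that for every $e\ge4$ and every $m$ this one free parameter suffices. The forbidden-residue list can grow linearly with $m$ while $|\ZZ_e^\times|=e-1$ is fixed, so a careful case analysis is needed: for $m+1<e$ the interval-length sums are already all nonzero mod $e$ and a direct counting argument works, while for $m+1\ge e$ one must either iterate the perturbation over contiguous blocks of length less than $e$, or choose the distinguishing interval $[i,j]$ to be minimally nested inside $s$ and $t$ so that few ambient intervals constrain the perturbation.
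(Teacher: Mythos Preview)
The paper does not prove this proposition directly---it is quoted from \cite{Huang20}---but the $e\ge4$ case is reproved in the present paper as the $i=1$ instance of Theorem~\ref{thm:NonAssocH4}, so there is something to compare against.

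Your treatment of $e=2$ is correct and trivial. Your $e=3$ argument is correct and quite clean: the identity $z\star w=\sigma(z\cdot w)$ with $\sigma$ a $\cdot$-automorphism of order~$2$ gives the closed form $(z_0\star\cdots\star z_m)_t=\prod_j\sigma^{d_j(t)}(z_j)$ directly, and reading off the $\chi_1$-coefficient as a monomial in the coordinates recovers the parity vector. This is arguably more transparent than a tree-by-tree induction and makes the link to Theorem~\ref{thm:ominus} immediate.

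The $e\ge4$ case, however, has a genuine gap, and you yourself flag it. Your plan is to distinguish $s\ne t$ by choosing $(a_k)\in(\ZZ_e^\times)^{m+1}$ so that exactly one of $s,t$ has a vanishing internal interval sum. Starting from the all-ones vector and perturbing $a_i,a_j$ subject to one linear constraint leaves a single free parameter ranging over at most $e-1$ values, while the number of internal intervals of $s$ and $t$ containing $i$ or $j$ (each imposing a forbidden residue) is $\Theta(m)$. For fixed $e$ and large $m$ there is no reason these constraints can be simultaneously avoided, and neither of your suggested fixes---iterating the perturbation over blocks, or choosing $[i,j]$ minimally nested---is carried out or clearly viable. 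Concretely, for $e=4$ and a tree pair where the distinguishing interval sits deep inside both trees, every choice of $a_i\in\{1,2,3\}$ may already be forbidden by an ancestor interval.

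The paper's approach in Theorem~\ref{thm:NonAssocH4} sidesteps this entirely. Rather than reasoning about residues, Lemma~\ref{lem:H4} shows that for any subtree one can prescribe both the output $\chi_{cu}$ (for any $c\in\{2,\dots,e-1\}$) and the value at a chosen leaf. This constructive control lets one force $z_j\star z_{j+1}=0$ in $s$ (where $j,j+1$ are siblings at maximal depth) while ensuring the corresponding piece of $t$ produces a nonzero basis element; the rest of $t$ is then filled in by Lemma~\ref{lem:H4}. That lemma is the missing ingredient: it decouples the construction from any global residue count and works uniformly for all $m$.
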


We extend this result to the Norton algebras $V_i(H(n,e))$ for $i\ge2$ in the next few subsections.

\subsection{The case $e=3$}
Assume $e=3$ in this subsection.
We begin with the subcase $i=n$.

\begin{proposition}
The Norton product $\star$ on $V_n(H(n,3))$ is equally as nonassociative as the double minus operation $\ominus$.
\end{proposition}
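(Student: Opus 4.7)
My plan is to reduce the statement to the $n=1$ case (already handled by Proposition~\ref{prop:V1}) via the direct product decomposition given in Proposition~\ref{prop:VnHn}.

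First, Proposition~\ref{prop:VnHn} provides an algebra isomorphism between $V_n(H(n,3))$ and the direct product of $2^{n-1}$ copies of $V_1(H(1,3))$. I would then invoke the following general observation: if an algebra $(A,\star)$ is isomorphic to a direct product of $k\ge1$ copies of an algebra $(B,\star)$ with component-wise operation, then for any two binary trees $s,t\in\T_m$, the expressions $(v_0\star v_1\star\cdots\star v_m)_s$ and $(v_0\star v_1\star\cdots\star v_m)_t$ agree as polynomial functions on $A^{m+1}$ if and only if they agree as polynomial functions on $B^{m+1}$. The reason is that component-wise evaluation reduces equality of the two expressions in $A$ to simultaneous equality in each of the $k$ components of $B$, and since all components are isomorphic to $B$ this is equivalent to equality in a single copy of $B$.

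Combining this observation with Proposition~\ref{prop:V1} specialized to $n=1$, $e=3$, which states that $\star$ on $V_1(H(1,3))$ is equally as nonassociative as $\ominus$, we conclude that for every $m\ge0$ and every $s,t\in\T_m$, the trees $s$ and $t$ are $\star$-equivalent in $V_n(H(n,3))$ if and only if they are $\ominus$-equivalent. This is precisely the proposition's claim. I do not anticipate any serious obstacle; the argument is a direct reduction from the already-established results on $V_1(H(1,3))$ combined with the straightforward behavior of nonassociativity under direct products of isomorphic copies.
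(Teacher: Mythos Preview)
Your proposal is correct and follows essentially the same route as the paper: reduce via Proposition~\ref{prop:VnHn} to a direct product of copies of $V_1(H(1,3))$ and then invoke Proposition~\ref{prop:V1}. The only difference is that you spell out explicitly the direct-product observation (that $\star$-equivalence in a product of isomorphic copies coincides with $\star$-equivalence in a single copy), whereas the paper leaves this implicit in its one-line appeal to Proposition~\ref{prop:V1}.
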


\begin{proof}
By Proposition~\ref{prop:VnHn}, the Norton algebra $V_n(H(n,3))$ is isomorphic to the direct product of $2^{n-1}$ copies of $V_1(H(1,3))$.
Thus the result holds by Proposition~\ref{prop:V1}.
\end{proof}

Now assume $1<i<n$. 
We need to recall some terminology for binary trees.
Given a vertex $x$ in a binary tree $t$, the \emph{(maximal) subtree of $t$ rooted at $x$} consists of all vertices and edges weakly below $x$.
In particular, the \emph{left/right subtree of $t$} is the subtree rooted at the left/right child of the root of $t$.

\begin{lemma}\label{lem:H3}
For any $m\ge0$, $t\in \T_m$, $j\in [m]$, and $u\in X_i$, there exist $z_0, z_1, \ldots, z_m\in \{\chi_u, \chi_{2u}\}$ such that $z_j=\chi_u$ and that $(z_0\star z_1\star\cdots \star z_m)_t$ equals $\chi_u$ if $d_j(t)$ is even or $\chi_{2u}$ if $d_j(t)$ is odd.
\end{lemma}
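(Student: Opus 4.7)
The plan is to give an explicit, depth-parity-based construction of the $z_k$'s. First I would record the multiplication table on the two-element set $\{\chi_u,\chi_{2u}\}$: since $e=3$ and $u\in X_i$ forces $\supp(2u)=\supp(u)$, we have $2u\in X_i$ and $3u=0\notin X_i$, so Theorem~\ref{thm:NortonHamming} gives
\[ \chi_u\star\chi_u=\chi_{2u},\qquad \chi_{2u}\star\chi_{2u}=\chi_u,\qquad \chi_u\star\chi_{2u}=\chi_{2u}\star\chi_u=0. \]
Thus among $\{\chi_u,\chi_{2u}\}$ the product is nonzero only when both factors agree, and in that case it swaps $\chi_u$ and $\chi_{2u}$.

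Now set $a:=(-1)^{d_j(t)}\in\ZZ_3^{\times}$ and define, for each leaf $k\in\{0,1,\ldots,m\}$,
\[ z_k := \begin{cases} \chi_{au} & \text{if } d_k(t)\text{ is even,}\\ \chi_{-au} & \text{if } d_k(t)\text{ is odd.}\end{cases} \]
By the choice of $a$ we have $z_j=\chi_u$. The main step is a claim proved by induction on the height of the subtree: for every vertex $v$ of $t$ at depth $k$, the subtree of $t$ rooted at $v$ evaluates under the parenthesization inherited from $t$ to $\chi_{(-1)^k au}$. A leaf at depth $k$ has the claimed value by construction, and if the two children of an internal vertex $v$ (each of depth $k+1$) satisfy the hypothesis, then the evaluation at $v$ is
\[ \chi_{(-1)^{k+1}au}\star\chi_{(-1)^{k+1}au}=\chi_{2(-1)^{k+1}au}=\chi_{(-1)^k au}, \]
using $2\equiv-1\pmod 3$. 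Applied to the root ($k=0$), the claim yields $\chi_{au}$, which is $\chi_u$ when $d_j(t)$ is even and $\chi_{2u}$ when $d_j(t)$ is odd, as required.

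There is no genuine obstruction. In fact, any assignment with $z_j=\chi_u$ that produces a nonzero tree evaluation is forced to be the one above, since two sibling subtrees must evaluate to the same element of $\{\chi_u,\chi_{2u}\}$ to avoid a zero product; the arithmetic $2\equiv-1\pmod 3$ is exactly what makes the depth-parity alternation consistent throughout the tree. This is the key mechanism aligning the Norton product restricted to $\{\chi_u,\chi_{2u}\}$ with the sign-flipping behavior of the double minus operation in Theorem~\ref{thm:ominus}.
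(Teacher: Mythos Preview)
Your argument is correct. Your explicit, depth-parity assignment $z_k=\chi_{(-1)^{d_k(t)}au}$ with $a=(-1)^{d_j(t)}$ works exactly as you claim; the key check is the multiplication table on $\{\chi_u,\chi_{2u}\}$, which you recorded correctly (note this uses $i\ge1$ so that $0\notin X_i$, which is the ambient assumption in this subsection).

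Your route differs from the paper's. The paper argues by induction on $m$: it splits $t$ into its left and right subtrees $t_1,t_2$, applies the inductive hypothesis to the subtree containing leaf $j$ to force its value, and then separately arranges the other subtree (using the last leaf as the anchor, with the symmetry $u\leftrightarrow 2u$) so that the two halves match and combine to the desired result. You instead give a closed-form construction up front and verify it by a one-line structural induction on the depth of internal vertices. Your approach is shorter and yields the extra information that the nonzero assignment is unique; the paper's recursive argument is less explicit but slots directly into the later proofs (Theorem~\ref{thm:NonAssocH3}), where the lemma is invoked one subtree at a time with a specified anchor leaf. Both approaches rest on the same underlying fact, namely that on $\{\chi_u,\chi_{2u}\}$ the Norton product behaves like ``square to swap, mix to zero,'' which is precisely the $\ominus$-type parity mechanism you point out.
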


\begin{proof}
We induct on $m$.
For $m=0$, the result is trivial.
Assume $m\ge1$ below.

Let $t_1\in \T_{m_1}$ and $t_2\in \T_{m_2}$ be the left and right subtrees of $t$.
Then
\[ 
(z_0\star \cdots \star z_m)_t = (z_0\star\cdots\star z_{m_1})_{t_1} \star (z_{m_1+1}\star\cdots\star z_m)_{t_2}.
\]
Assume $j\in[m_1]$, without loss of generality.
Consider the case when $d_j(t)$ is even; the odd case is similar.
We have $d_j(t_1)$ is odd as $d_j(t)=d_j(t_1)+1$.
We can obtain $z_0,\ldots,z_m\in \{\chi_u,\chi_{2u}\}$ such that $(z_0\star\cdots\star z_{m_1})_{t_1} =(z_{m_1+1}\star\cdots\star z_m)_{t_2}=\chi_{2u}$ by applying the inductive hypothesis to $t_1$ (with $z_j=\chi_u$) and $t_2$ (with $z_m=\chi_u$ if $d_m(t_2)$ is odd or $z_m=\chi_{2u}$ if $d_m(t_2)$ is even).
Then we have $(z_0\star \cdots \star z_m)_t = \chi_{2u}\star\chi_{2u}=\chi_u$ as desired.
\end{proof}

\begin{theorem}\label{thm:NonAssocH3}
For $i=2,3,\ldots,n-1$, the Norton product on $V_i(H(n,3))$ is totally nonassociative.
\end{theorem}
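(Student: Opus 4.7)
The plan is strong induction on $m$, constructing for each pair of distinct trees $s, t \in \T_m$ an explicit sequence of basis inputs $\chi_{u_0}, \ldots, \chi_{u_m}$ whose evaluations under $s$ and $t$ disagree. The base cases $m \leq 1$ are immediate. For the inductive step, decompose $s = (s_L, s_R)$ and $t = (t_L, t_R)$ with left subtrees having $k+1$ and $\ell+1$ leaves, and split on whether $k = \ell$.

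When $k = \ell$, at least one of $s_L \neq t_L$ or $s_R \neq t_R$ holds; by symmetry assume $s_L \neq t_L$. By induction there exist basis inputs $\chi_{u_0}, \ldots, \chi_{u_k}$ whose evaluations at $s_L$ and $t_L$ are distinct basis elements $\chi_a$ and $\chi_b$. I then pick basis inputs $\chi_{u_{k+1}}, \ldots, \chi_{u_m}$ so that both right-subtree evaluations agree on a common basis element $\chi_c$ (automatic when $s_R = t_R$, and otherwise obtained by a symmetric application of the induction on the right side). The two full evaluations then become $\chi_a \star \chi_c$ and $\chi_b \star \chi_c$, which are distinct as long as $a + c, b + c \in X_i$. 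Using $i < n$ I can choose $c$ so that part of its support lies outside $\supp(a) \cup \supp(b)$, while entries at positions of $\supp(a) \cap \supp(b)$ are aligned with those of $a$ and $b$ to avoid the cancellation $1 + 2 \equiv 0 \pmod{3}$; this keeps both sums in $X_i$.

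The subtler case is $k \neq \ell$, say $k < \ell$. Here leaves $k+1, \ldots, \ell$ lie in the right subtree of $s$ but in the left subtree of $t$, so the two root splits truly differ. The idea is to engineer the inputs so that $t$ evaluates to zero (by forcing some intermediate product inside $t_L$ to leave $X_i$) while every intermediate product along $s$ stays inside $X_i$ and yields a nonzero basis vector. For $e = 3$ the relation $1 + 2 \equiv 0 \pmod{3}$ acts as a sensitive switch that can be toggled by a single coordinate, and the condition $1 < i < n$ provides the positional freedom to place that switch exactly at the boundary between leaf $\ell$ and leaf $\ell + 1$. The principal obstacle is this last case: commutativity of $\star$ hides the split position from any left-right asymmetry of inputs, and the all-or-nothing multiplication rule of Theorem~\ref{thm:NortonHamming} means one misplaced entry collapses an entire subtree to zero, so the inputs must be engineered with global coherence. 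To carry out both cases uniformly I would strengthen the inductive hypothesis to include control over the support of the resulting basis vector, so that the left and right subtree inductions can be spliced together without incompatibility.
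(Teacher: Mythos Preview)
Your outline has the right instincts but does not close, and the gaps are exactly at the points where the argument becomes delicate.

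In the $k=\ell$ case you assert that when $s_R\ne t_R$ you can still make both right-subtree evaluations agree on a common basis element $\chi_c$, ``obtained by a symmetric application of the induction on the right side.'' But the inductive hypothesis manufactures \emph{disagreement}, not agreement; there is no mechanism in your sketch that forces two distinct trees, fed the same inputs, to produce the same nonzero basis output. You also take for granted that the left-side inductive outputs are two nonzero basis vectors $\chi_a,\chi_b$, whereas the plain hypothesis only promises distinct outputs, one of which may be zero. Your proposed fix---strengthening the inductive hypothesis to ``include control over the support of the resulting basis vector''---is left unspecified, and it is not clear that any single strengthening simultaneously handles the $k=\ell$ and $k\ne\ell$ cases.

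The $k\ne\ell$ case is the crux, and here you only describe the goal (kill $t$, keep $s$ alive) and the available tool ($1+2\equiv0$ as a switch) without giving a construction. The difficulty is that the root split is a global decomposition: leaves $k{+}1,\ldots,\ell$ sit on opposite sides of the root in $s$ and $t$, so a choice that zeroes an internal node of $t_L$ may simultaneously zero a node deep inside $s_R$. Nothing in your outline rules this out.

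The paper sidesteps both obstacles by a different decomposition. It first reduces (via the subalgebra $V_u\cong V_1(H(1,3))$ and Proposition~\ref{prop:V1}) to the case where the depth sequences of $s$ and $t$ agree modulo~$2$. It then picks the leftmost leaf $j$ of \emph{maximal} depth in $s$, so that $j$ and $j+1$ form a cherry in $s$. If they also form a cherry in $t$, one contracts both cherries and applies induction to the smaller trees---no need to reconcile unrelated subtrees. Otherwise one sets $z_j,z_{j+1}$ so that $z_j\star z_{j+1}=0$, forcing the $s$-evaluation to vanish, while an explicit triple $u,v,w\in X_i$ with $u+v\notin X_i$ but $u+w,\,v+w\in X_i$ (available precisely because $1<i<n$), together with Lemma~\ref{lem:H3} applied to the surrounding subtrees of $t$, makes the $t$-evaluation a nonzero basis vector. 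The point is that the cherry is a purely \emph{local} structure in $s$: zeroing it cannot accidentally kill anything in $t$, and Lemma~\ref{lem:H3} supplies exactly the global control over the rest of $t$ that your root-split approach lacks.
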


\begin{proof}
Let $s$ and $t$ be any two distinct binary trees in $\T_m$.
We need to show that
\begin{equation}\label{eq:unequal}
(z_0\star \cdots \star z_m)_s \ne (z_0\star \cdots \star z_m)_t \quad\text{ for some $z_0, z_1, \ldots, z_m\in V_i(H(n,3))$.}
\end{equation}
We may assume that $d_s(j)\equiv d_j(t)\pmod 2$ for all $j=0,1,\ldots,m$; if not, then there exist $z_0, z_1, \ldots, z_m$ in the subalgebra $V_u\cong V_1(H(1,3))$ of $V_i(H(n,3))$ (see Lemma~\ref{lem:Vu}) for any $u\in X_i$ such that $(z_0\star \cdots \star z_m)_s \ne (z_0\star \cdots \star z_m)_t$ by Theorem~\ref{thm:ominus} and Proposition~\ref{prop:V1}. 

We proceed by induction on $m$. 
For $m=2$, Equation~\eqref{eq:unequal} holds since any $u\in X_i$ satisfies
\[ (\chi_u\star\chi_u)\star \chi_{-u} = \chi_{2u}\star\chi_{-u} =\chi_u \ne 0 = \chi_u\star(\chi_u\star \chi_{-u}). \]

Now assume $m\ge3$. 
Let $j$ be the leftmost leaf with the largest depth among all leaves in $s$.
Then $j$ is a left leaf, $j+1$ is a right leaf, and they share a common parent in $s$.
We distinguish some cases below for the positions of $j$ and $j+1$ in $t$.

\vskip5pt\noindent\textbf{Case 1}.
Suppose that $j$ is a left leaf and $j+1$ is a right leaf, so they share a common parent in $t$.
Then deleting $j$ and $j+1$ from $s$ and $t$ gives two distinct trees $s'$ and $t'$ in $\T_{m-1}$.
Applying the inductive hypothesis to $s'$ and $t'$ gives $(z'_0\star\cdots\star z'_{m-1})_{s'} \ne (z'_0\star\cdots\star z'_{m-1})_{t'}$ for some $z'_0, \ldots, z'_{m-1}$ in $\{\chi_u: u\in X_i\}$.
We have $z'_j=\chi_v$ for some $v\in X_i$ and we can define $z_j=z_{j+1}:=\chi_{2v}$.
Also let $z_k:=z'_k$ for $k=0,\ldots,j-1$ and $z_\ell:=z'_{\ell-1}$ for $\ell=j+2,\ldots,m$.
Since $z_j\star z_{j+1} = \chi_v = z'_j$, we have 
\[ (z_0\star\cdots\star z_m)_s = (z'_0\star\cdots\star z'_{m-1})_{s'} 
\ne (z'_0\star\cdots\star z'_{m-1})_{t'} = (z_0\star\cdots\star z_m)_t. \]

\vskip5pt\noindent\textbf{Case 2}.
Suppose that $j$ and $j+1$ are both left leaves in $t$.
Then $j+1$ is contained in the subtree $r$ of $t$ rooted at the right sibling of $j$.
Since $d_j(t)\equiv d_j(s)=d_{j+1}(s)\equiv d_{j+1}(t) \pmod 2$, the depth of $j+1$ in $r$ must be even.
Thus the left subtree of $r$ has two left and right subtrees $r_1$ and $r_2$ and $j+1$ is in $r_1$ with an even depth.
Define $u,v,w\in X_i$ below such that $u+v\notin X_i$, $u+w\in X_i$, and $v+w\in X_i$.
\begin{center}
\begin{tabular}{c|ccccccccc}
$j$ & $1$ & $2$ & $3$ & $\cdots$ & $i$ & $i+1$ & $\cdots$ &  $n-1$ & $n$ \\ 
\hline
$u(j)$ & $1$ & $1$ & $1$ & $\cdots$ & $1$ & $0$ & $\cdots$ & $0$ & $0$ \\
$v(j)$ & $0$ & $1$ & $1$ & $\cdots$ & $1$ & $0$ & $\cdots$ & $0$ & $1$ \\
$w(j)$ & $2$ & $0$ & $1$ & $\cdots$ & $1$ & $0$ & $\cdots$ & $0$ & $2$  
\end{tabular}
\end{center}
By Lemma~\ref{lem:H3}, the subtree $r_1$ can produce $\chi_{v}$ with $z_{j+1}=\chi_v$, the subtree $r_2$ can produce $\chi_w$, and the right subtree of $r$ can product $\chi_{2v}$.
Combining these with $z_j=\chi_u$ gives 
\[ \chi_u \star(( \chi_v\star\chi_w)\star\chi_{2v}) = \chi_u\star(\chi_{v+w}\star \chi_{2v}) 
= \chi_u\star\chi_w =\chi_{u+w}.\]
See the left picture in Figure~\ref{fig:tree}, where $j$ and $j+1$ are in red.
Then applying Lemma~\ref{lem:H3} to the tree obtained from $t$ by contracting $j$ and $r$ to their parent gives $(z_0\star\cdots\star z_m)_t = \chi_{c(u+w)}\ne0$, where $c\in\{1,2\}$.
On the other hand, we have $(z_0\star\cdots\star z_m)_s=0$ since $z_j\star z_{j+1}=\chi_u\star\chi_v=0$. 
Thus we are done with this case.

\begin{figure}[h]
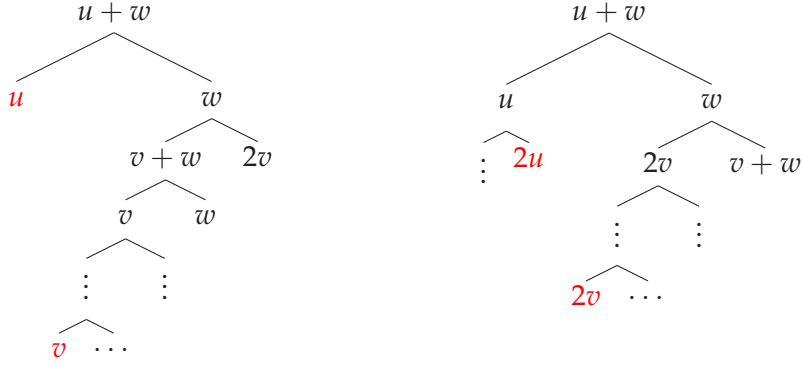

\Tree [.$u+w$ [.${\color{red}u}$ ] [.$w$ [.$v+w$ [.$v$ [.$\vdots$ ${\color{red}v}$ $\cdots$ ] $\vdots$ ] $w$ ] $2v$ ] ]
\qquad
\Tree [.$u+w$ [.$u$ $\vdots$ ${\color{red}2u}$ ] [.$w$ [.$2v$ [.$\vdots$ ${\color{red}2v}$ $\cdots$ ] $\vdots$ ] $v+w$ ] ]
\caption{Case 2 and Case 3 in the proof of Theorem~\ref{thm:NonAssocH3}}\label{fig:tree}
\end{figure}

\vskip5pt\noindent\textbf{Case 3}.
Suppose that $j$ is a right leaf in $t$.
Then $j$ is contained in the subtree $r_1$ of $t$ rooted at the parent of $j$, and $j+1$ is contained in the subtree $r_2$ of $t$ rooted at the right sibling of the parent of $j$.
Since $d_j(t)\equiv d_j(s)=d_{j+1}(s)\equiv d_{j+1}(t) \pmod 2$, the depth of $j+1$ in $r_2$ must be odd.
Thus $j+1$ must be a left leaf in the left subtree of $r_2$ with an even depth.
By Lemma~\ref{lem:H3}, we can obtain $\chi_{u}$ from $r_1$ with $z_j=\chi_{2u}$, obtain $\chi_{2v}$ from the left subtree of $r_2$ with $z_{j+1}=\chi_{2v}$, and obtain $\chi_{v+w}$ from the right subtree of $r_2$.
Combining $r_1$ and $r_2$ gives 
\[ \chi_{u}\star(\chi_{2v}\star\chi_{v+w}) = \chi_{u}\star\chi_{w} = \chi_{u+w}. \] 
See the right picture in Figure~\ref{fig:tree}, where $j$ and $j+1$ are in red.
Contracting $r_1$ and $r_2$ and applying Lemma~\ref{lem:H3} again gives $(z_0\star\cdots\star z_m)_t = \chi_{c(u+w)}\ne0$, where $c\in\{1,2\}$.
On the other hand, we have $(z_0\star\cdots\star z_m)_s=0$ since $z_j\star z_{j+1}=\chi_{2u}\star\chi_{2v}=0$.
\end{proof}

\subsection{The case $e\ge4$}
We study case $e\ge4$ similarly as the case $e=3$.
We may assume $i\ge1$. 

\begin{lemma}\label{lem:H4}
For any $m\ge1$, $t\in \T_m$, $j\in [m]$, $u\in X_i$, and $c\in\{2,\ldots,e-1\}$, there exist $z_0, z_1, \ldots, z_m$ in $\{\chi_v: v\in X_i\}$ such that $z_j=\chi_u$ and $(z_0\star z_1\star\cdots \star z_m)_t = \chi_{cu}$.
\end{lemma}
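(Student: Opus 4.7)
My plan is to proceed by strong induction on $m$, supplemented by an auxiliary claim that for any tree $s \in \mathcal{T}_{m'}$ with no constrained leaf one can label its leaves with characters in $\{\chi_v : v \in X_i\}$ so that the parenthesized product equals $\chi_u$. The two inductions are carried out in tandem on the number of leaves, and every recursive call strictly decreases this count, so no circularity arises. The base case $m = 1$ is immediate: with $j = 1$ forced, I set $z_0 := \chi_{(c-1)u}$ and Theorem~\ref{thm:NortonHamming} gives $z_0 \star z_1 = \chi_{cu}$, where the implicit assumption $cu \in X_i$ also guarantees $(c-1)u \in X_i$.

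For the inductive step ($m \geq 2$), I decompose $t$ as the join $t_1 \star t_2$ of its left and right subtrees and assume without loss of generality that $j$ lies in $t_1$, so $m_1 \geq 1$. Writing $L$ and $R$ for the $t_1$- and $t_2$-partial products, I aim to arrange $L = \chi_{au}$ and $R = \chi_{bu}$ for some $a, b \in \{1, \ldots, e-1\}$ with $a + b \equiv c \pmod{e}$, so that $L \star R = \chi_{cu}$. Since $e \geq 4$ ensures $|\{2, \ldots, e-1\} \setminus \{c\}| \geq 1$, I can pick $a \in \{2, \ldots, e-1\}$ distinct from $c$ and apply the main inductive hypothesis to $t_1$ (keeping $z_j = \chi_u$) to achieve $L = \chi_{au}$. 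The complementary value $b = c - a \pmod{e}$ then lies in $\{1, \ldots, e-1\}$, and I produce $R = \chi_{bu}$ by direct assignment if $m_2 = 0$, by the main inductive hypothesis applied to $t_2$ with any leaf designated if $b \geq 2$, and by the auxiliary claim if $b = 1$. The auxiliary claim itself is handled by decomposing $s = s_1 \star s_2$ and making $s_1$ produce $\chi_{2u}$ and $s_2$ produce $\chi_{-u} = \chi_{(e-1)u}$, which gives $\chi_{2u} \star \chi_{-u} = \chi_u$; each half uses either direct assignment (on a single leaf) or the main lemma with $c \in \{2, e-1\}$, both of which lie in $\{2, \ldots, e-1\}$ for $e \geq 4$.

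The principal obstacle — and the reason the hypothesis $e \geq 4$ is essential — is that for small $e$ such as $e = 4$ with $c = 3$, no admissible split of $c$ as $a + b$ with both $a, b \in \{2, \ldots, e-1\}$ exists, so the auxiliary claim is genuinely needed; and that claim hinges on having three distinct nonzero residues $1, 2, -1$ available in $\mathbb{Z}_e$, which fails precisely at $e = 3$ and is why Lemma~\ref{lem:H3} settles for the weaker conclusion $\chi_u$ or $\chi_{2u}$. A secondary bookkeeping point is confirming that every intermediate product remains in $X_i$: this reduces to checking that each partial sum $a' + b'$ encountered is nonzero in $\mathbb{Z}_e$, which follows from the choices made above together with the operating assumption that all scalar multiples $au$ with $a \in \mathbb{Z}_e \setminus \{0\}$ stay in $X_i$.
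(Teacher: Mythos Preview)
Your proof is correct and follows the same inductive skeleton as the paper's: split $t$ into its left and right subtrees, apply the hypothesis to the side containing the constrained leaf $j$, and arrange the other side to supply the complementary factor. The one genuine difference is how you make the unconstrained subtree $t_2$ produce $\chi_u$ in your $b=1$ case: you introduce a separate auxiliary claim and prove it via the factorization $\chi_u=\chi_{2u}\star\chi_{(e-1)u}$. The paper avoids this extra statement by observing that the main inductive hypothesis itself, applied to $t_2$ with $u$ replaced by $-u$ and target exponent $e-1$, already yields $\chi_{(e-1)(-u)}=\chi_u$ directly. Your route is slightly longer but has the virtue of keeping the base vector $u$ fixed throughout; the paper's substitution trick is more economical but depends on spotting that the hypothesis may be invoked with a shifted $u$. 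The paper also hard-codes the split as $c_1=c-1$ (falling back to $c_1=e-1$ and right factor $\chi_{3u}$ when $c=2$) rather than your more uniform choice of any $a\ne c$, but that difference is cosmetic.
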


\begin{proof}
We induct on $m$.
For $m=1$, we have $(c-1)u\in X_i$ and $\chi_u\star\chi_{(c-1)u}=\chi_{(c-1)u}\star\chi_u=\chi_{cu}$.
Assume $m\ge2$ below.
Let $t_1\in \T_{m_1}$ and $t_2\in \T_{m_2}$ be the left and right subtrees of $t$.
Assume $j\in[m_1]$, without loss of generality.
By the inductive hypothesis, for all $c_1=2,\ldots,e-1$, there exist $z_0,\ldots,z_{m_1}\in \{\chi_v:v\in X_i\}$ such that $z_j=\chi_u$ and $(z_0\star\cdots\star z_{m_1})_{t_1}=\chi_{c_1u}$.

If $m_2=0$ then we have $(z_0\star \cdots \star z_m)_t=\chi_{c_1u}\star z_m = \chi_{cu}$, where $c_1=c-1$ and $z_m=\chi_u$ when $c>2$, or $c_1=e-1$ and $z_m=\chi_{3u}$ when $c=2$.

If $m_2\ge1$ then we can apply inductive hypothesis to $t_2$ and get $(z_0\star \cdots \star z_m)_t=\chi_{c_1u}\star \chi_{w} = \chi_{cu}$, where $c_1=c-1$ and $w=(e-1)(-u)=u$ when $c>2$, or $c_1=e-1$ and $w=3u$ when $c=2$.
\end{proof}

\begin{theorem}\label{thm:NonAssocH4}
For $e\ge4$ and $i=1,\ldots,n$, the Norton product on $V_i(H(n,e)))$ is totally nonassociative.
\end{theorem}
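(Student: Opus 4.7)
The plan is to derive total nonassociativity of $V_i(H(n,e))$ directly from an embedded copy of $V_1(H(1,e))$. For $i\ge 1$, I would pick $u\in X_i$ with every nonzero entry equal to $1$, so that $u(\supp(u))\subseteq \ZZ_e^\times$. Lemma~\ref{lem:Vu} then gives a subalgebra $V_u\subseteq V_i(H(n,e))$ isomorphic to $V_1(H(1,e))$, and Proposition~\ref{prop:V1} specialized to $n=1$ asserts that $V_1(H(1,e))$ is totally nonassociative whenever $e\ge 4$.

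Next I would observe that total nonassociativity is inherited from a subalgebra by the containing algebra. For every $m\ge 0$ and every pair of distinct binary trees $s,t\in\T_m$, Proposition~\ref{prop:V1} furnishes elements $z_0,\ldots,z_m\in V_u\subseteq V_i(H(n,e))$ with $(z_0\star \cdots \star z_m)_s\ne (z_0\star \cdots \star z_m)_t$; since the product on $V_u$ coincides with the Norton product on $V_i(H(n,e))$ restricted to $V_u$, these elements also witness total nonassociativity of $V_i(H(n,e))$. This completes the argument.

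Compared with the proof of Theorem~\ref{thm:NonAssocH3}, where the analogous subalgebra $V_u\cong V_1(H(1,3))$ is only equally as nonassociative as $\ominus$ and therefore a delicate case analysis using Lemma~\ref{lem:H3} is required to separate trees with matching depth-parity sequences, the $e\ge 4$ situation needs no such refinement: a single totally nonassociative subalgebra already does all the work. Consequently Lemma~\ref{lem:H4} is not strictly required for this theorem, although it supports an alternative self-contained induction paralleling Theorem~\ref{thm:NonAssocH3} that avoids citing Proposition~\ref{prop:V1}. The main obstacle in such a direct approach would be a clean case analysis on the placement of the leftmost deepest sibling pair of $s$ inside $t$; the extra flexibility of Lemma~\ref{lem:H4}, which prescribes any multiple $\chi_{cu}$ with $c\in\{2,\ldots,e-1\}$ at any leaf of any subtree with at least two leaves independent of depth parity, should let each case be arranged so that $(z_0\star \cdots \star z_m)_s = 0$ while $(z_0\star \cdots \star z_m)_t$ is a prescribed nonzero character.
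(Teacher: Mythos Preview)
Your argument is correct and takes a genuinely different, more economical route than the paper. The paper proceeds by direct induction on $m$ inside the set $\{\chi_{cu}: c=1,\ldots,e-1\}$, using Lemma~\ref{lem:H4} to prescribe outputs of subtrees and running a three-case analysis on the position of the leftmost deepest sibling pair of $s$ inside $t$, entirely parallel to Theorem~\ref{thm:NonAssocH3}. You instead observe that this set is precisely the basis of the subalgebra $V_u\cong V_1(H(1,e))$ from Lemma~\ref{lem:Vu}, invoke Proposition~\ref{prop:V1} (from~\cite{Huang20}) to get total nonassociativity of $V_1(H(1,e))$ for $e\ge4$, and then note the trivial fact that total nonassociativity passes from a subalgebra to the ambient algebra. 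What your approach buys is brevity and a clear conceptual reason why the $e\ge4$ case is easier than $e=3$: a single totally nonassociative subalgebra suffices, so no tree-by-tree construction is needed. What the paper's approach buys is self-containment within the linear-character framework developed here, avoiding the external citation to~\cite{Huang20}; in effect the paper is reproving the $n=1$, $e\ge4$ case of Proposition~\ref{prop:V1} inside $V_u$, which is exactly what you anticipate in your final paragraph.
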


\begin{proof} 
The proof is similar to Theorem~\ref{thm:NonAssocH3} but less technical.
Let $s$ and $t$ be two distinct binary trees in $\T_m$.
Define $u\in X_i$ by $u(1)=\cdots=u(i)=1$ and $u(i+1)=\cdots=u(n)=0$.
Then $cu\in X_i$ for all $c=1,\ldots,e-1$.
We show that $(z_0\star \cdots \star z_m)_s \ne (z_0\star \cdots \star z_m)_t$ for some $z_0, z_1, \ldots, z_m$ in $\{\chi_{cu}: c=1,\ldots,e-1\}$ by induction on $m$.
For $m=2$ we have
\[ (\chi_u\star\chi_u)\star \chi_{-u} = \chi_{2u}\star\chi_{-u} =\chi_u \ne 0 = \chi_u\star(\chi_u\star \chi_{-u}). \]

Assume $m\ge3$ below.
Let $j$ be the leftmost leaf of $s$ with the largest depth.
Then $j$ is a left leaf, $j+1$ is a right leaf, and they share a common parent in $s$.
We distinguish some cases below for the positions of $j$ and $j+1$ in $t$.

\vskip5pt\noindent\textbf{Case 1}.
Suppose that $j$ is a left leaf and $j+1$ is a right leaf, so they share a common parent in $t$.
Then deleting $j$ and $j+1$ from $s$ and $t$ gives two distinct trees $s'$ and $t'$ in $\T_{m-1}$.
Applying the inductive hypothesis to $s'$ and $t'$ we obtain the desired result.

\vskip5pt\noindent\textbf{Case 2}.
Suppose that $j$ and $j+1$ are both left leaves in $t$.
Then $j+1$ is contained in the subtree of $t$ rooted at the right sibling of $j$.
By Lemma~\ref{lem:H4}, this subtree can produce $\chi_{2u}$ with $z_{j+1}=\chi_u$.
With $z_j=\chi_{-u}$, the subtree of $t$ rooted at the parent of $j$ gives $\chi_{-u}\star\chi_{2u}=\chi_u$.
Applying Lemma~\ref{lem:H4} again gives $(z_0\star\cdots\star z_m)_t = \chi_{cu}\ne 0$, where $c\in\{2,\ldots,e-1\}$.
On the other hand, we have $(z_0\star\cdots\star z_m)_t = 0$ since $z_j\star z_{j+1} = \chi_{-u}\star\chi_u=0$.
So we are done with this case.

\vskip5pt\noindent\textbf{Case 3}.
Suppose that $j$ is a right leaf in $t$.
Then $j$ is contained in the subtree of $t$ rooted at the parent of $j$, and $j+1$ is contained in the subtree of $t$ rooted at the right sibling of the parent of $j$.
These two subtrees can produce $\chi_{-2u}$ and $\chi_{3u}$, respectively, with $z_j=-u$ and $z_{j+1}=u$, thanks to Lemma~\ref{lem:H4}.
Thus we can make sure $(z_0\star\cdots\star z_m)_t=\chi_{cu}$ for any $c\in\{2,\ldots,e-1\}$, while $(z_0\star\cdots\star z_m)_s = 0$.
This completes the proof.
\end{proof}

\subsection{The case $e=2$}\label{sec:hypercube}

Finally, we study the nonassociativity of the Norton algebras of the hypercube $Q_n=H(n,2)$.

\begin{lemma}\label{lem:tree}
Let $S$ and $T$ be $i$-subsets of $[n]$ with $|S\triangle T|=i$.
Then for any $m\ge0$, $t\in \T_m$, $j\in[m]$, and distinct $R, R'\in \{S, T, S\triangle T \}$, there exist $z_0, \ldots, z_m \in\{\chi_S, \chi_T, \chi_{S\triangle T} \}$ such that $(z_0\star\cdots \star z_m)_t=\chi_R$ and unless $m=0$, this can be done in such a way that $z_j=\chi_{R'}$.
\end{lemma}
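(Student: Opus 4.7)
The plan is to induct on $m$. For $m=0$ the claim is immediate: set $z_0 := \chi_R$, with the $z_j$-constraint vacuous since $[0]=\emptyset$. For $m \ge 1$, decompose $t = t_1 \star t_2$ with $t_k \in \T_{m_k}$ and $m_1+m_2+1 = m$, so that the leaves of $t_1$ are labelled $0,1,\ldots,m_1$ and those of $t_2$ are labelled $m_1+1,\ldots,m$. Let $R''$ denote the unique element of $\{S,T,S\triangle T\}$ distinct from both $R$ and $R'$.

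The algebraic engine is that when $|S\triangle T|=i\ge1$ (the case $i=0$ being vacuous, since then there are no distinct $R,R'$), the three $i$-sets $S,T,S\triangle T$ are pairwise distinct and their pairwise symmetric differences cycle through the set. By Corollary~\ref{cor:hypercube}, the product table on $\{\chi_S,\chi_T,\chi_{S\triangle T}\}$ therefore reads $\chi_X\star\chi_Y = \chi_{X\triangle Y}$ whenever $X\ne Y$ (the result again lying in the set) and $\chi_X\star\chi_X = 0$. In particular, the requirement $(z_0\star\cdots\star z_m)_t = \chi_R$ forces the outputs of $t_1$ and $t_2$ to be $\chi_{R'}$ and $\chi_{R''}$ in some order.

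Now case split on the position of leaf $j$. If $1 \le j \le m_1$, apply the inductive hypothesis to $t_1$ with target $\chi_{R''}$ and constraint $z_j=\chi_{R'}$, and then realize $\chi_{R'}$ on $t_2$ either by IH (when $m_2\ge1$) or by directly setting $z_m := \chi_{R'}$ (when $m_2=0$). If $m_1+2 \le j \le m$, argue symmetrically, applying IH to $t_2$ at its internal leaf $j-m_1-1 \in [m_2]$ and handling $t_1$ by IH or (when $m_1=0$) by the direct assignment $z_0 := \chi_{R''}$. The only remaining possibility is $j = m_1+1$: when $m_2 = 0$ this is simply $z_m$, so set $z_m := \chi_{R'}$ directly and solve $t_1$ for target $\chi_{R''}$ by IH (or $m=1$ and we are in the base case).

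The main obstacle is the remaining subcase $j = m_1+1$ with $m_2 \ge 1$, because leaf $j$ then sits at position $0$ of $t_2$'s internal labelling, and the inductive hypothesis as stated only licenses constraints on internal leaves indexed in $[m_2]$. I would remove this awkwardness by strengthening the inductive statement to allow $j \in \{0,1,\ldots,m\}$ for all $m \ge 1$; the strengthened version obviously implies the stated lemma. The case analysis above then closes (the new case being $m_1=0$ with $j=0$, handled by taking $z_0:=\chi_{R'}$ and invoking the strengthened IH on $t_2$ for target $\chi_{R''}$), and the base case $m=1$ is a one-line check: for either $j \in \{0,1\}$, take $z_j := \chi_{R'}$ and $z_{1-j} := \chi_{R''}$, using that $R'\triangle R'' = R$.
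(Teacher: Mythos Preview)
Your proof is correct and follows the same inductive approach as the paper's: split $t$ into left and right subtrees, use the cyclic product table on $\{\chi_S,\chi_T,\chi_{S\triangle T}\}$, and recurse. You are in fact more careful than the paper about the case $j=m_1+1$; the paper invokes commutativity to write ``we may assume $j\in[m_1]$'' and then immediately treats the case $m_1=0\Rightarrow j=0$, tacitly relying on the same strengthening you spell out.
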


\begin{proof}
We induct on $m$.
The result is trivial when $m=0$, and it holds when $m=1$ since $\chi_S \star \chi_T = \chi_{S\triangle T}$, $\chi_S\star \chi_{S\triangle T} = \chi_T$, $\chi_T\star \chi_{S\triangle T} = \chi_S$, and $\star$ is commutative.

Assume $m\ge2$ below.
Let $t_1\in \T_{m_1}$ and $t_2\in \T_{m_2}$ be the left and right subtrees of $t$.
For the same reason as mentioned in the case $m=1$, we may assume $j\in[m_1]$, $R=S$, and $R'=T$, without loss of generality.
If $m_1=0$ then $j=0$ and there exist $z_0,\ldots,z_m\in\{\chi_S,\chi_T,\chi_{S\triangle T}\}$ such that $z_0=\chi_T$, $(z_1\star\cdots\star z_m)_{t_2}=\chi_{S\triangle T}$, and thus $(z_0\star \cdots \star z_m)_t = \chi_T \star \chi_{S\triangle T} = \chi_S$ by the inductive hypothesis.
If $m_1\ge1$ then the inductive hypothesis gives $z_0, \ldots, z_m \in \{\chi_S, \chi_T, \chi_{S\triangle T} \}$ such that $z_j=\chi_T$, $(z_0\star\cdots\star z_{m_1})_{t_1} = \chi_{S\triangle T}$, $(z_{m_1+1}\star\cdots\star z_m)_{t_2}=\chi_T$, and $(z_0\star \cdots \star z_m)_t = \chi_{S\triangle T}\star\chi_T = \chi_S$.
The proof is complete.
\end{proof}

\begin{theorem}\label{thm:NonAssocQn}
The Norton product $\star$ on $V_i(Q_n)$ is associative if $i=0$, $i$ is odd, or $i>\lfloor 2n/3 \rfloor$, and totally nonassociative if $i$ is even and $1\le i\le \lfloor 2n/3 \rfloor$.
\end{theorem}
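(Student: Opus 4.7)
The associative direction is immediate: if $i = 0$, then $V_0(Q_n) = \CC\chi_\emptyset$ is one-dimensional with $\chi_\emptyset$ idempotent, so $\star$ is trivially associative; and if $i$ is odd or $i > \lfloor 2n/3 \rfloor$, then Proposition~\ref{prop:zero} tells us that $\star$ vanishes identically on $V_i(Q_n)$, which is again associative. For the remaining case, assume $i$ is even and $1 \le i \le \lfloor 2n/3 \rfloor$.

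Under this hypothesis Lemma~\ref{lem:Delta} furnishes $i$-subsets $S, T \subseteq [n]$ with $|S \triangle T| = i$; setting $U = S \triangle T$, the triple $\{S, T, U\}$ has all pairwise symmetric differences of size $i$, so the span of $\chi_S, \chi_T, \chi_U$ is a three-dimensional subalgebra of $V_i(Q_n)$ in which $\chi_R \star \chi_R = 0$ and $\chi_R \star \chi_{R'}$ equals the third basis element whenever $R, R' \in \{S,T,U\}$ are distinct. It therefore suffices to show by induction on $m$ that any two distinct $s, t \in \T_m$ are separated by some choice of $z_0, \ldots, z_m$ drawn from $\{\chi_S, \chi_T, \chi_U\}$, with Lemma~\ref{lem:tree} as the key tool for controlling how subtrees evaluate.

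The base case $m = 2$ is explicit, since $(\chi_S \star \chi_S) \star \chi_T = 0$ while $\chi_S \star (\chi_S \star \chi_T) = \chi_T$. For the inductive step $m \ge 3$, let $j$ be the leftmost leaf of maximum depth in $s$, so that $(j, j+1)$ is a cherry in $s$, and let $r$ be the smallest subtree of $t$ containing both leaves $j$ and $j+1$. If $r$ is also the cherry $(j, j+1)$, then contracting this cherry in both $s$ and $t$ produces distinct $s', t' \in \T_{m-1}$; applying the inductive hypothesis yields distinguishing values $z'_0, \ldots, z'_{m-1}$, and we lift by setting $z_k = z'_k$ for $k < j$, $z_\ell = z'_{\ell - 1}$ for $\ell > j + 1$, and $z_j = \chi_A$, $z_{j+1} = \chi_B$ where $\{A, B\} = \{S, T, U\} \setminus \{R\}$ with $R$ determined by $z'_j = \chi_R$ (so $z_j \star z_{j+1} = \chi_R = z'_j$). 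Otherwise, write $r_L, r_R$ for the left and right subtrees of $r$, containing $j$ and $j+1$ respectively, with at least one of them containing more than one leaf (else $r$ would itself be a cherry). Set $z_j = z_{j+1} = \chi_R$ for some fixed $R$, which forces $s$ to evaluate to $0$ at its cherry; by Lemma~\ref{lem:tree} we can then choose the remaining leaves of $r_L, r_R$ so that $r_L$ evaluates to $\chi_A$ and $r_R$ to $\chi_B$ with $A \ne B$, giving $r$ the nonzero value $\chi_{A \triangle B}$. A final application of Lemma~\ref{lem:tree} to the tree obtained from $t$ by contracting $r$ to a single leaf (held at $\chi_{A \triangle B}$) yields a nonzero evaluation of $t$, which differs from the $0$ evaluation of $s$.

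The main delicacy lies in the last case: ensuring that $A \ne B$ can always be forced simultaneously with $z_j = z_{j+1} = \chi_R$ requires a short subcase analysis according to whether $r_L$ consists only of the leaf $j$, or $r_R$ consists only of the leaf $j+1$, or both subtrees are larger. In each subcase the availability of two non-$R$ elements in $\{S, T, U\}$ together with Lemma~\ref{lem:tree} delivers the required $A \ne B$, much in the spirit of the proofs of Theorems~\ref{thm:NonAssocH3} and~\ref{thm:NonAssocH4}.
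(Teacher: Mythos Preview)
Your proof is correct and follows essentially the same strategy as the paper's: the associative cases are handled identically, and for the totally nonassociative case you use the same induction, the same cherry $(j,j+1)$ in $s$, and the same dichotomy between ``cherry also in $t$'' (reduce by contraction) and ``not a cherry in $t$'' (force $s$ to $0$ while steering $t$ to a nonzero value via Lemma~\ref{lem:tree}). The only cosmetic difference is that you organize the non-cherry case around the least common ancestor subtree $r$ and its halves $r_L,r_R$, whereas the paper splits it according to whether $j$ is a left or right leaf in $t$; these two case decompositions are equivalent, and your final subcase remark (one of $r_L,r_R$ a single leaf vs.\ both larger) is exactly what the paper's Cases~2 and~3 handle.
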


\begin{proof}
The Norton algebra $V_0(Q_n)$ is one-dimensional and must be associative.
If $i$ is odd or $i>\lfloor 2n/3 \rfloor$ then the algebra $V_i(Q_n)$ is also associative since $\chi_S \star \chi_T = 0$ for all $i$-sets $S, T\subseteq [n]$, thanks to Corollary~\ref{cor:hypercube} and Lemma~\ref{lem:Delta}.

Assume $i$ is even and $1\le i\le \lfloor 2n/3 \rfloor$ below.
There exist $i$-subsets $S$ and $T$ of $[n]$ such that $|S\triangle T|=i$ by Lemma~\ref{lem:Delta}.
Let $s$ and $t$ be two distinct binary trees in $\T_m$.
We show that $(z_0\star \cdots \star z_m)_s \ne (z_0\star \cdots \star z_m)_t$ for some $z_0, z_1, \ldots, z_m\in \{ \chi_S, \chi_T, \chi_{S\Delta T} \}$ by induction on $m$. For $m=2$ we have 
\[ (\chi_S\star\chi_S)\star \chi_T = 0 \ne \chi_T = \chi_S\star(\chi_S\star \chi_T). \]
Assume $m\ge3$. 
Let $j$ be the leftmost leaf of $s$ with the largest depth.
Then $j$ is a left leaf, $j+1$ is a right leaf, and they share a common parent in $s$.
We distinguish some cases for $j$ and $j+1$ in $t$.

\vskip5pt\noindent\textbf{Case 1}.
Suppose that $j$ is a left leaf and $j+1$ is a right leaf, so they share a common parent in $t$.
Then deleting $j$ and $j+1$ from $s$ and $t$ gives two distinct trees $s'$ and $t'$ in $\T_{m-1}$.
Applying the inductive hypothesis gives the desired result.

\vskip5pt\noindent\textbf{Case 2}.
Suppose that $j$ and $j+1$ are both left leaves in $t$.
Then $j+1$ is contained in the subtree of $t$ rooted at the right sibling of $j$.
By Lemma~\ref{lem:tree}, this subtree can produce $\chi_T$ with $z_{j+1}=\chi_S$.
Then the subtree of $t$ rooted at the parent of $j$ gives $\chi_S\star \chi_T = \chi_{S\triangle T}$ with $z_j=\chi_S$.
Hence we can make sure $(z_0\star\cdots\star z_m)_t\in \{\chi_S, \chi_T, \chi_{S\triangle T}\}$ by using Lemma~\ref{lem:tree} again. 
On other hand, we have $(z_0\star\cdots\star z_m)_s=0$ as $z_j\star z_{j+1}=\chi_S\star\chi_S=0$.
So we are done with this case.

\vskip5pt\noindent\textbf{Case 3}.
Suppose that $j$ is a right leaf in $t$.
Then $j$ is contained in the subtree $r_1$ of $t$ rooted at the parent of $j$, and $j+1$ is contained in the subtree $r_2$ of $t$ rooted at the right sibling of the parent of $j$.
With $z_j=z_{j+1}=\chi_S$, Lemma~\ref{lem:tree} implies that $r_1$ can produce $\chi_T$ and $r_2$ can produce either $\chi_S$ if $r_2\in \T_0$ or $\chi_{S\triangle T}$ otherwise.
Combining $r_1$ and $r_2$ gives either $\chi_T\star\chi_S=\chi_{S\triangle T}$ or $\chi_T\star\chi_{S\triangle T}=\chi_S$.
Applying Lemma~\ref{lem:tree} again gives $(z_0\star\cdots\star z_m)_t \in\{ \chi_S,\chi_T, \chi_{S\triangle T}\}$, whereas $(z_0\star\cdots\star z_m)_s=0$.
This completes the proof.
\end{proof}

\begin{remark}
The above proof replies on the fact that $\chi_A \star \chi_B = \chi_C$ for any permutation $\chi_A, \chi_B, \chi_C$ of the triple $\chi_S, \chi_T, \chi_{S\Delta T}$.
Thus one can use the same proof to show that the cross product on the $n$-dimensional space is totally nonassociative for all $n\ge3$, and that  the octonions (hence all higher dimensional Cayley--Dickson constructions) have a totally nonassociative multiplication as well, thanks to the existence of a triple that behaves in the same way as above, except for the anticommutativity which does not affect the proof.
\end{remark}

\section{Halved and/or folded cubes}\label{sec:cubes}

In this section we study the Norton algebras of the halved and/or folded cubes via the same linear character approach used for the hypercube.

\subsection{Halved cube}\label{sec:halved-cube}

Let $\Gamma$ be a distance regular graph of diameter $d$.
For $i=0,1,\ldots,d$, let $\Gamma_i$ be the graph with the same vertex set as $\Gamma$ but with edge set consisting of all unordered pairs of vertices at distance $i$ from each other in $\Gamma$.
If the graph $\Gamma$ is bipartite then $\Gamma_2$ has two connected components, giving two distance regular graphs known as the \emph{halved graphs} of $\Gamma$~\cite[Proposition 2.13]{DistReg2}.

In particular, the \emph{halved cube} or \emph{half-cube} $\frac12 Q_n$ has vertex set $X$ consisting of all binary strings of length $n$ with even weight and edge set $E$ consisting of all unordered pairs of vertices differing in exactly two positions. 
This is a distance regular graph of diameter $d=\lfloor n/2 \rfloor$.
For $i=0,1,\ldots,d$, the $i$th eigenvalue and its multiplicity are~\cite[\S9.2D]{DistReg1}
\[ \theta_i=\frac{(n-2i)^2-n}{2} \qand
\dim(V_i)=
\begin{cases}
\binom{n}{i} & \text{if } i<n/2 \\
\frac12\binom{n}{i} & \text{if } i=n/2.
\end{cases} \]
 
With the vertex set $X$ viewed as a subgroup of $\ZZ_2^n$, the halved cube $\frac12 Q_n$ becomes the Cayley graph $\Gamma(X,X_2)$ of $X$ with respect to $X_2$, where $X_i:=\{x\in X: |\supp(x)|=i\}$.
For every $S\subseteq[n]$ we define a linear character $\chi_S$ of $X$ by 
\[ \chi_S(x) := \prod_{j\in S} (-1)^{x(j)} \quad\text{for all } x\in X. \]

\begin{lemma}\label{lem:halved-cube}
Let $S,T\subseteq[n]$.
Then $\chi_S=\chi_T$ if and only if $S=T$ or $S^c=T$, where $S^c:=[n]-S$.
\end{lemma}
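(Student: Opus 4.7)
The plan is to prove both directions separately. The ``if'' direction is essentially immediate, while the ``only if'' direction reduces to determining which subsets $U\subseteq[n]$ give a character $\chi_U$ that is identically $1$ on $X$.

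For the ``if'' direction, if $S=T$ then there is nothing to check. Suppose instead that $T=S^c$. For any $x\in X$, I would compute
\[ \chi_S(x)\chi_{S^c}(x) = \prod_{j\in[n]}(-1)^{x(j)} = (-1)^{|\supp(x)|}, \]
which equals $1$ because every vertex of the halved cube has even weight. Since both values lie in $\{\pm1\}$, this forces $\chi_S(x)=\chi_{S^c}(x)$.

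For the ``only if'' direction, the key observation is that $\chi_S(x)\chi_T(x)=\chi_{S\triangle T}(x)$ for all $x$ (since each $j\in S\cap T$ contributes $(-1)^{2x(j)}=1$). Hence $\chi_S=\chi_T$ on $X$ is equivalent to $\chi_U\equiv1$ on $X$, where $U:=S\triangle T$. So the task reduces to showing that $\chi_U\equiv1$ on $X$ implies $U\in\{\emptyset,[n]\}$. I would argue by contradiction: assume $\emptyset\subsetneq U\subsetneq[n]$ and pick $j\in U$ and $k\in[n]\setminus U$. Then the vector $e_j+e_k$ has weight $2$, hence lies in $X$, and $\chi_U(e_j+e_k)=(-1)^1\cdot(-1)^0=-1$, contradicting $\chi_U\equiv1$. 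Therefore $U=\emptyset$ (so $S=T$) or $U=[n]$ (so $T=S^c$).

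There is no real obstacle here; the lemma is a short structural observation. The only point that deserves attention is that the even-weight restriction defining $X$ is precisely what makes $\chi_S$ and $\chi_{S^c}$ coincide, and conversely this same restriction is what prevents any other coincidences, since any single transposition-like vector $e_j+e_k$ still lies in $X$ and can detect whether an index $j$ belongs to $U$.
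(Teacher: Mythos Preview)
Your proof is correct and follows essentially the same approach as the paper: both directions use the even-weight condition on $X$, and for the ``only if'' direction both arguments test at the weight-two vector supported on one index inside $S\triangle T$ and one outside. Your reduction to the auxiliary set $U=S\triangle T$ is a slightly cleaner packaging, but the substance is identical.
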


\begin{proof}
For each $x\in X$, since $|\supp(x)|$ is even, we have 
\[ \chi_{S}(x) =  (-1)^{\sum_{j\in S}x(j)} = (-1)^{\sum_{j\in S^c}x(j)} = \chi_{S^c}(x). \]
Thus $\chi_S = \chi_{S^c}$.
Conversely, suppose that $\chi_S=\chi_T$ for two distinct sets $S,T\subseteq[n]$.
Then there exists $j\in S-T$.
If there exists $k\in[n]-S\triangle T$ then we have a contradiction that $\chi_S(x)\ne\chi_T(x)$ for $x\in X$ with $x(j)=x(k)=1$ and all other entries zero.
Thus $S\triangle T=[n]$, which implies $S^c=T$.
\end{proof}

\begin{theorem}\label{thm:halved-cube}
The Norton algebra $V_i(\frac12 Q_n)$ has a basis $\{\chi_S: S\subseteq[n],\ |S|=i\}$ if $0\le i<n/2$ or $\{\chi_S: S\subseteq[n],\ |S|=i,\ 1\in S\}$ if $i=n/2$.
For any $S,T\subseteq[n]$ with $|S|=|T|=i$ we have
\[ \chi_S \star \chi_T =
\begin{cases}
\chi_{S\triangle T} = \chi_{(S\triangle T)^c} & \text{if } |S\triangle T|\in\{i,n-i\} \\
0 & \text{otherwise.} 
\end{cases} \]
\end{theorem}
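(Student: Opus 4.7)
The plan is to apply Theorem~\ref{thm:Cayley} and Theorem~\ref{thm:Norton} to the description $\frac{1}{2}Q_n = \Gamma(X, X_2)$, where $X \le \ZZ_2^n$ is the even-weight subgroup. The characters of $\ZZ_2^n$ restrict to characters of $X$, and Lemma~\ref{lem:halved-cube} identifies which of these restrictions coincide: precisely $\chi_S$ and $\chi_{S^c}$, and no others. Hence the $2^n$ subsets of $[n]$ partition into $2^{n-1} = |X|$ complementary pairs, giving a complete list of the characters of $X$, which by Theorem~\ref{thm:character} form an orthonormal basis of $\CC^X$.

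Next I would compute the eigenvalue attached to $\chi_S$ by splitting $X_2 = \{e_j + e_k : \{j,k\} \in \binom{[n]}{2}\}$ according to $|\{j,k\} \cap S|$. Writing $i = |S|$, this yields
\[ \chi_S(X_2) = \binom{i}{2} + \binom{n-i}{2} - i(n-i) = \frac{(n-2i)^2 - n}{2} = \theta_i. \]
Since $\theta_i$ is a function of $(n-2i)^2$, the characters $\chi_S$ and $\chi_{S'}$ lie in the same eigenspace if and only if $|S'| \in \{|S|, n-|S|\}$, which is compatible with the pairing $S \leftrightarrow S^c$. For $0 \le i < n/2$ the $i$-subsets and $(n-i)$-subsets pair up into $\binom{n}{i}$ complementary classes, matching $\dim V_i$; for $i = n/2$ the complementation fixes sizes, and the condition $1 \in S$ selects exactly one representative from each pair, giving $\frac{1}{2}\binom{n}{n/2} = \dim V_{n/2}$ basis elements. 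The eigenvalues $\theta_0, \ldots, \theta_{\lfloor n/2 \rfloor}$ are pairwise distinct (the equation $(n-2i)^2 = (n-2j)^2$ with $0 \le i,j \le n/2$ forces $i = j$), so the listed $\chi_S$'s actually span $V_i$.

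For the Norton product, the entry-wise product satisfies $\chi_S \cdot \chi_T = \chi_{S \triangle T}$, a character with eigenvalue $\theta_{|S \triangle T|}$. Theorem~\ref{thm:Norton} then gives $\chi_S \star \chi_T = \chi_{S \triangle T}$ precisely when $\theta_{|S \triangle T|} = \theta_i$, i.e.\ when $|S \triangle T| \in \{i, n-i\}$, and zero otherwise; the equality $\chi_{S \triangle T} = \chi_{(S \triangle T)^c}$ is Lemma~\ref{lem:halved-cube}. The most delicate point is the bookkeeping at $i = n/2$: one must check that whether one lands in the basis representative or its complement is immaterial, which is again exactly the content of Lemma~\ref{lem:halved-cube}. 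Everything else is essentially a direct computation once the Cayley-graph framework of Section~\ref{sec:Cayley} is in place.
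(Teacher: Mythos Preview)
Your proposal is correct and follows essentially the same route as the paper's proof: both use Lemma~\ref{lem:halved-cube} to count the distinct characters, compute $\chi_S(X_2)$ by the same case split on $|\{j,k\}\cap S|$, and then read off the Norton product from Theorem~\ref{thm:Norton}. You are slightly more explicit than the paper about why the eigenvalues $\theta_0,\ldots,\theta_{\lfloor n/2\rfloor}$ are pairwise distinct and about the representative choice at $i=n/2$, but the argument is the same.
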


\begin{proof}
By Lemma~\ref{lem:halved-cube}, the set $\{\chi_S:S\subseteq[n]\}$ has cardinality $2^{n-1}=|X|$ and must contain all linear characters of $X$, thanks to Theorem~\ref{thm:character}.
By Theorem~\ref{thm:Cayley}, if $|S|=i$ then the linear character $\chi_S$ is an eigenvector corresponding the the eigenvalue
\[ \chi_S(X_2) = \binom{i}{2} - i(n-i) + \binom{n-i}{2} = 
\frac{(n-2i)^2-n}2. \]
This proves the desired basis of $V_i(\frac12 Q_n)$ for $i=0,1,\ldots,\lfloor n/2 \rfloor$.
Let $S,T$ be $i$-subsets of $[n]$ with $|S\triangle T|=j$. 
We have $\chi_S\cdot \chi_T = \chi_{S\triangle T} = \chi_{(S\triangle T)^c}$, which belongs to $V_j(\frac12 Q_n)$ if $j\le n/2$ or $V_{n-j}(\frac12 Q_n)$ otherwise. 
Thus the projection onto $V_i$ fixes $\chi_S\cdot \chi_T$ if $j\in\{i,n-i\}$ or annihilates it otherwise. 
\end{proof}

\begin{remark}
There is a bijection from binary strings of length $n$ with even weight to binary strings of length $n-1$ by deleting the $n$th entry.
This gives another way to realize the halved cube $\frac12 Q_n$ as a Cayley graph of a finite abelian group and leads to a slightly different (but equivalent) description of its Norton algebras.
\end{remark}

\begin{corollary}\label{cor:halved-cube}
There is an algebra isomorphism $V_i(\frac12 Q_n)\cong V_i(Q_n)$ if $i< \lceil n/3 \rceil$ or $n-i$ is odd.
\end{corollary}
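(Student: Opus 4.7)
The plan is to verify that under either hypothesis ($i<\lceil n/3\rceil$ or $n-i$ odd) the natural map $\phi\colon V_i(\tfrac12 Q_n)\to V_i(Q_n)$ sending $\chi_S\mapsto\chi_S$ for every $i$-subset $S\subseteq[n]$ is a well-defined algebra isomorphism. First I would check that both Norton algebras use the ``full'' linear-character basis indexed by all $i$-subsets of $[n]$. This is where the hypothesis enters on the level of vector spaces: if $n-i$ is odd, then $i\ne n/2$ (otherwise $n-i=n/2$ would force $n/2$ odd with $n$ even, impossible); and if $i<\lceil n/3\rceil$ then certainly $i<n/2$. In either case Theorem~\ref{thm:halved-cube} gives the basis $\{\chi_S:|S|=i\}$, of the same cardinality $\binom{n}{i}$ as the basis of $V_i(Q_n)$ in Corollary~\ref{cor:hypercube}, so $\phi$ is a linear isomorphism.

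Next I would compare the two product formulas. By Corollary~\ref{cor:hypercube} the product in $V_i(Q_n)$ sends $\chi_S\star\chi_T$ to $\chi_{S\triangle T}$ if $|S\triangle T|=i$ and to $0$ otherwise. By Theorem~\ref{thm:halved-cube} the product in $V_i(\tfrac12 Q_n)$ agrees with this, except that it could also produce the nonzero value $\chi_{(S\triangle T)^c}$ when $|S\triangle T|=n-i$. So it suffices to rule out the case $|S\triangle T|=n-i$ for $i$-subsets $S,T$ of $[n]$ under either hypothesis.

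This is exactly Lemma~\ref{lem:Delta}: the symmetric difference of two $i$-subsets of $[n]$ has cardinality that is even and at most $\min\{2i,2(n-i)\}$. Thus $|S\triangle T|=n-i$ requires $n-i$ even and $n-i\le 2i$, equivalently $n-i$ even and $i\ge\lceil n/3\rceil$. If $n-i$ is odd this fails for parity reasons; if $i<\lceil n/3\rceil$ it fails for size reasons. In either case the extra term never appears, so the two product formulas coincide, and $\phi$ is an algebra isomorphism.

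There is no real obstacle; the only mildly subtle point is making sure the basis of $V_i(\tfrac12 Q_n)$ really has $\binom{n}{i}$ elements (and not the halved cardinality at $i=n/2$) so that $\phi$ can be a bijection. Both hypotheses force $i<n/2$, handling this automatically.
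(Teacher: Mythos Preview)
Your approach is the same as the paper's: define the obvious map $\chi_S\mapsto\chi_S$, use Lemma~\ref{lem:Delta} to rule out $|S\triangle T|=n-i$, and conclude the two product formulas coincide. However, your verification that $i<n/2$ contains an error. You claim that ``$n-i$ odd'' forces $i\ne n/2$ because ``$n/2$ odd with $n$ even'' is impossible---but it is not: take $n=6$, $i=3$ (or $n=2$, $i=1$). In these cases $n-i$ is odd yet $i=n/2$, and then $\dim V_i(\tfrac12 Q_n)=\tfrac12\binom{n}{i}\ne\binom{n}{i}=\dim V_i(Q_n)$, so your map $\phi$ cannot be a bijection.

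This is not a gap you can repair: the corollary as literally stated fails at $i=n/2$ when $n\equiv 2\pmod 4$. The paper's own proof quietly restricts to $i<n/2$ from the outset (``we see an obvious vector space isomorphism \ldots\ for all $i<n/2$''), so the statement should be read with that restriction understood. With $i<n/2$ assumed, the rest of your argument is correct and matches the paper exactly.
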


\begin{proof}
Comparing Theorem~\ref{thm:NortonHamming} with Theorem~\ref{thm:halved-cube}, we see an obvious vector space isomorphism $V_i(\frac12 Q_n)\cong V_i(Q_n)$ for all $i<n/2$, which becomes an algebra isomorphism if $i< \lceil n/3 \rceil$ or $n-i$ is odd, since 
there exist $i$-sets $S,T\subseteq [n]$ such that $|S\triangle T|=n-i$ if and only if $n-i \le 2i$ and $n-i$ is even by Lemma~\ref{lem:Delta}.
\end{proof}

We next examine some examples of the Norton algebra $V_{n/2}(\frac12 Q_n)$, which can be obtained from $V_{n/2}(Q_n)$ by identifying $\chi_S$ and $\chi_{S^c}$ for all $S\subseteq[n]$ with $|S|=n/2$.

\begin{example}\label{example:halved-cube}
The Norton algebra $V_1(\frac12 Q_2)$ has a basis $\{\chi_1=\chi_2\}$ with $\chi_1\star\chi_1=0$, and thus is isomorphic to $V_1(Q_1)$.
The Norton algebra $V_2(\frac12 Q_4)$ has a basis $\{\chi_{12}, \chi_{13}, \chi_{14}\}$ consisting of nilpotent elements of order $2$ satisfying 
\[ \chi_{12}\star\chi_{13} = \chi_{23}=\chi_{14},\quad \chi_{12}\star\chi_{14}=\chi_{24}=\chi_{13}, \qand \chi_{13}\star\chi_{14}=\chi_{34}=\chi_{12}.\]
Comparing this with Example~\ref{example:H32} one sees an algebra isomorphism $V_2(\frac12 Q_4) \cong V_2(Q_3)$.
Incidentally, $V_1(\frac12 Q_3)$ has a basis $\{\chi_1,\chi_2,\chi_3\}$ with $\chi_j\star\chi_j=0$ for $j=1,2,3$ and $\chi_j\star\chi_k=\chi_\ell$ for distinct $j,k,\ell\in\{1,2,3\}$, and thus is isomorphic to $V_2(Q_3)$ as well.
\end{example}



Now we study the automorphism group of the Norton algebra $V_i(\frac12 Q_n)$ using the Coxeter group $\SS_n^D$ of type $D_n$ consisting of all signed permutations $f=(\epsilon,\sigma)\in\SS_n^B$ with $\epsilon([n])=1$, where $\epsilon(T):=\prod_{j\in T}\epsilon(j)$ for all $T\subseteq[n]$.
We may assume $i\ge1$ and $(i,n)\ne \{(1,2), (2,4)\}$, thanks to Proposition~\ref{prop:AutV0Hn}, Corollary~\ref{cor:halved-cube}, and Example~\ref{example:halved-cube}.

\begin{theorem}
For $i=1,\ldots,\lfloor n/2 \rfloor$, every signed permutation $f=(\sigma,\epsilon)\in\SS_n^D$ induces an automorphism of the Norton algebra $V_i(\frac12 Q_n)$ by sending $\chi_S$ to $\epsilon(\sigma(S))\chi_{\sigma(S)}$ for all $i$-sets $S\subseteq[n]$.
Such automorphisms form a group isomorphic to $\SS_n^D$ if $i$ and $n$ are not both even and $(i,n)\ne(1,2)$, or $\SS_n^D/\{\pm1\}$ if $i$ and $n$ are both even and $(i,n)\ne(2,4)$.
\end{theorem}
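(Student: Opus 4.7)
The plan is to define $\phi_f(\chi_S) := \epsilon(\sigma(S))\chi_{\sigma(S)}$ on the distinguished basis of $V_i(\frac12 Q_n)$, verify that $\phi_f$ is a well-defined algebra automorphism, check that $f \mapsto \phi_f$ is a group homomorphism, and compute its kernel. The defining condition $\epsilon([n]) = 1$ of the subgroup $\SS_n^D \le \SS_n^B$ will be used crucially throughout.

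For well-definedness, the only subtle point arises when $i = n/2$: the basis consists of $\chi_S$ with $1 \in S$, but $\sigma(S)$ need not contain $1$. In that case one rewrites $\chi_{\sigma(S)} = \chi_{\sigma(S)^c}$ via Lemma~\ref{lem:halved-cube}, and the formula is consistent because $\epsilon(\sigma(S)) = \epsilon(\sigma(S)^c)$ whenever $\epsilon([n]) = 1$ (since $\epsilon(A)\epsilon(A^c) = \epsilon([n])$). To verify preservation of the Norton product, one uses $\sigma(S) \triangle \sigma(T) = \sigma(S \triangle T)$ to see that the three cases of Theorem~\ref{thm:halved-cube} match on both sides, and uses the identity $\epsilon(A)\epsilon(B) = \epsilon(A \triangle B)$ to match signs in the case $|S \triangle T| = i$; the case $|S \triangle T| = n - i$ requires additionally $\epsilon(\sigma(S \triangle T)) = \epsilon(\sigma((S \triangle T)^c))$, which again reduces to $\epsilon([n]) = 1$.

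The homomorphism property follows by a direct computation using the group law recalled in Equation~\eqref{eq:SB}, mirroring the analogous argument in the proof of Theorem~\ref{thm:AutHamming}. For the kernel, if $\phi_f$ fixes every basis element then $\sigma(S) \in \{S, S^c\}$ for every relevant $i$-subset $S$; when $i < n/2$ cardinality forbids $\sigma(S) = S^c$, forcing $\sigma = \id$ via the faithful action of $\SS_n$ on $i$-subsets. The remaining condition $\epsilon(S) = 1$ for all such $S$ propagates to show $\epsilon$ is constant on $[n]$ by swapping a single element in and out of a fixed $(i-1)$-subset; the constant value $-1$ survives precisely when both $(-1)^i = 1$ and $(-1)^n = 1$, i.e., when $i$ and $n$ are both even, yielding the quotient by $\{\pm 1\}$.

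The main obstacle I expect is the kernel analysis when $i = n/2$, where the possibility $\sigma(S) = S^c$ is not ruled out by cardinality and must be excluded by playing several basis subsets off each other; this combinatorial argument succeeds exactly outside the excluded pairs $(i,n) = (1,2)$ and $(2,4)$, where the basis becomes too small to separate the $\sigma$-action from its composition with complementation. All other steps amount to bookkeeping that keeps track of how $\epsilon([n]) = 1$ propagates through symmetric differences and complements.
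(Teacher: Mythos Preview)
Your proposal is correct and follows essentially the same route as the paper's proof: verify the three cases of Theorem~\ref{thm:halved-cube} are preserved, check the homomorphism property via Equation~\eqref{eq:SB}, and compute the kernel by forcing $\sigma(S)\in\{S,S^c\}$, ruling out $S^c$ by cardinality (or, when $i=n/2\ge 3$, by a swap argument), and then showing $\epsilon$ is constant. Your explicit treatment of well-definedness in the case $i=n/2$ (using $\epsilon(\sigma(S))=\epsilon(\sigma(S)^c)$ from $\epsilon([n])=1$) is a point the paper leaves implicit, and your identification of the $i=n/2$ kernel step as the one requiring a separate combinatorial argument matches exactly where the paper invokes the auxiliary set $T$.
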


\begin{proof}
Every signed permutation $f=(\sigma,\epsilon)\in\SS_n^D$ induces an automorphism of $V_i(\frac12 Q_n)$ since the following holds for all $i$-sets $S,T\subseteq[n]$.
\begin{itemize}
\item
If $|S\triangle T|=i$ then $f(\chi_S)\star f(\chi_T) = \epsilon\sigma(S)\epsilon\sigma(T)\chi_{\sigma(S)\triangle\sigma(T)} = \epsilon\sigma(S\triangle T)\chi_{\sigma(S\triangle T)} = f(\chi_S\star\chi_T)$ since $\epsilon\sigma(S)\epsilon\sigma(T)=\epsilon\sigma(S\triangle T)$.
\item
If $|S\triangle T|=n-i$ then $f(\chi_S)\star f(\chi_T) = \epsilon\sigma(S)\epsilon\sigma(T) \chi_{((\sigma(S)\triangle\sigma(T))^c} = \epsilon\sigma(S\triangle T)\chi_{\sigma((S\triangle T)^c)}$ and $f(\chi_S\star\chi_T) = \epsilon\sigma((S\triangle T)^c) \chi_{\sigma((S\triangle T)^c)}$ equal since $\epsilon([n])=1$.
\item
If $|S\triangle T|\notin\{i,n-i\}$ then $\chi_S\star\chi_T=0$ and $f(\chi_S)\star f(\chi_T) = \epsilon\sigma(S) \chi_{\sigma(S)} \star \epsilon\sigma(T) \chi_{\sigma(T)} = 0$.
\end{itemize}
For any $f'=(\sigma',\epsilon')\in\SS_n^D$ we have $ff' = (\epsilon\cdot \epsilon'\sigma^{-1}, \sigma\sigma')$ by Equation~\eqref{eq:SB} and thus
\[ f(f'(\chi_S)) = f(\epsilon'\sigma'(S)\chi_{\sigma'(S)}) =  \epsilon\sigma\sigma'(S) \epsilon'\sigma'(S) \chi_{\sigma\sigma'(S)} = (ff')(\chi_S). \]
Therefore we have a homomorphism from $\SS_n^D$ to the automorphism group of $V_i(\frac12 Q_n)$.
To find its kernel, suppose $f=(\sigma,\epsilon)\in\SS_n^D$ fixes $\chi_S$, i.e., $\sigma(S)\in\{S,S^c\}$ and $\epsilon\sigma(S)=1$ for any $i$-set $S\subseteq[n]$.

If $1\le i<n/2$ then we have $\sigma(S)=S$ since $|\sigma(S)|=i<n-i$.
If $3\le i=n/2$ then we also have $\sigma(S)=S$ since $\sigma(S)=S^c$ implies $\sigma(T) \notin\{T,T^c\}$ for the $i$-set $T\subseteq[n]$ obtained from $S$ by replacing any $j\in S$ with $k\in S^c-\{\sigma^{-1}(j)\}$.
Thus $\sigma=\id$ as $\SS_n$ acts faithfully on $i$-subsets of $[n]$.

For any distinct $j,k\in[n]$, there exists an $i$-set $S\subseteq[n]$ such that $j\in S$ and $k\notin S$.
Replacing $j$ with $k$ in $S$ gives an $i$-set $T\subseteq[n]$, and $\epsilon(S)=\epsilon(T)=1$ implies $\epsilon(j)=\epsilon(k)$.
It follows that $\epsilon=1$ or $\epsilon=-1$, and the latter implies that $n$ is even as $\epsilon([n])=1$.
Conversely, if $\epsilon=-1$ then $\epsilon(S)=1$ for all $i$-sets $S\subseteq[n]$ if and only if $i$ is even.
The result follows.
\end{proof}

\begin{example}
The signed permutation $f\in\SS_6^B$ given by $f(1)=-1$ and $f(j)=j$ for $j=2,3,4,5,6$ does not induce an automorphism of the Norton algebra $V_2(\frac12 Q_6)$, since $f$ fixes $\chi_{56}=\chi_{12}\star\chi_{34}$ but $f(\chi_{12})\star f(\chi_{34}) = - \chi_{12}\star \chi_{34} = - \chi_{56}$.
\end{example}

Next, we measure the nonassociativity of the Norton algebras of the halved cube $\frac12 Q_n$.

\begin{theorem}\label{thm:nonassoc-halved-cube}
For $i=0,1,\ldots,\lfloor n/2 \rfloor$, the Norton product on $V_i(\frac12 Q_n)$ is associative if $i=0$, $i$ is odd and $i< \lceil n/3 \rceil$, $i$ and $n-i$ are both odd, or $n-i$ is odd and $i>\lfloor 2n/3\rfloor$, or totally nonassociative otherwise.
\end{theorem}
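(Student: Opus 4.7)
The plan is to parallel the proof of Theorem~\ref{thm:NonAssocQn}, adapting its tree-combinatorial argument to the halved cube. The starting point is Theorem~\ref{thm:halved-cube}: the product $\chi_S\star\chi_T$ is nonzero only when $|S\triangle T|\in\{i, n-i\}$. Applying Lemma~\ref{lem:Delta}, such pairs exist iff $i$ is even with $i\le\lfloor 2n/3\rfloor$ (for $|S\triangle T|=i$), or $n-i$ is even with $i\ge\lceil n/3\rceil$ (for $|S\triangle T|=n-i$). Consequently, the product on $V_i(\tfrac12 Q_n)$ is identically zero precisely when both conditions fail. Combined with the trivial case $i=0$ (where $V_0$ is one-dimensional), this reproduces the listed associative cases; the condition ``$i>\lfloor 2n/3\rfloor$'' is vacuous in the range $1\le i\le\lfloor n/2\rfloor$ since $\lfloor 2n/3\rfloor\ge\lfloor n/2\rfloor$, but is included for uniformity with Theorem~\ref{thm:hypercube}.

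For the totally nonassociative direction, suppose there exist $i$-sets $S,T\subseteq[n]$ with $|S\triangle T|\in\{i, n-i\}$. I would exhibit three distinct characters $\chi_S, \chi_T, \chi_U$ that form a closed multiplicative triple analogous to the one in the hypercube proof: each squares to zero, and any two distinct ones multiply to give the third. If $|S\triangle T|=i$, take $U=S\triangle T$ and closure is immediate as in Theorem~\ref{thm:NonAssocQn}. If $|S\triangle T|=n-i$, take $U=(S\triangle T)^c$; using the identity $A\triangle B^c=(A\triangle B)^c$, one computes $S\triangle U=T^c$, whence $\chi_S\star\chi_U=\chi_{T^c}=\chi_T$ by Lemma~\ref{lem:halved-cube}, and symmetrically $\chi_T\star\chi_U=\chi_S$. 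Distinctness of the three characters also follows from Lemma~\ref{lem:halved-cube} by a short case check.

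With the closed triple in hand, the remainder of the proof is a direct transcription of Theorem~\ref{thm:NonAssocQn}. First I would establish the halved-cube analog of Lemma~\ref{lem:tree}: for any $t\in\T_m$, leaf position $j\in[m]$, and any distinct $R, R'\in\{\chi_S, \chi_T, \chi_U\}$, one can assign values from $\{\chi_S, \chi_T, \chi_U\}$ to the leaves with $z_j=R'$ so that $(z_0\star\cdots\star z_m)_t=R$. The proof is the same straightforward induction on $m$, relying only on commutativity and the closed triple property. Then, for two distinct trees $s,t\in\T_m$, the main induction on $m$ proceeds by the same three-case analysis of the positions of the leftmost deepest sibling pair of $s$ in $t$. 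In each case the evaluation $(z_0\star\cdots\star z_m)_s$ is forced to be zero (using that each generator squares to zero) while $(z_0\star\cdots\star z_m)_t$ is forced to be a nonzero character via the new Lemma~\ref{lem:tree}-type statement.

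The main conceptual point, and the only place the argument needs a new ingredient beyond Theorem~\ref{thm:NonAssocQn}, is the construction of the closed triple in the case $|S\triangle T|=n-i$; verifying its closure requires the identification $\chi_A=\chi_{A^c}$ together with the symmetric-difference complement identity. Once that routine verification is done, the rest of the argument is structurally identical to the hypercube proof and no genuinely new combinatorics is needed.
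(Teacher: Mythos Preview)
Your proposal is correct and follows essentially the same route as the paper's proof: identify the associative cases via Lemma~\ref{lem:Delta} and Theorem~\ref{thm:halved-cube}, then in the remaining cases produce a closed triple $\{\chi_S,\chi_T,\chi_U\}$ and rerun the argument of Theorem~\ref{thm:NonAssocQn}. The paper simply writes ``the rest of the proof goes in the same way as the proof of Theorem~\ref{thm:NonAssocQn}'' without spelling out the complement case, whereas you have made the closed-triple verification for $|S\triangle T|=n-i$ explicit (and correctly noted that the clause $i>\lfloor 2n/3\rfloor$ is vacuous in the range $i\le\lfloor n/2\rfloor$).
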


\begin{proof}
If $i=0$ then $V_i(\frac12 Q_n)$ is one-dimensional and must be associative.
Assume $i\ge1$ below.
By Lemma~\ref{lem:Delta} and Theorem~\ref{thm:halved-cube}, there exist $i$-sets $S,T\subseteq [n]$ such that $\chi_S\star\chi_T\ne0$ if and only if either $i\le 2n/3$ and $i$ is even or $i\ge n/3$ and $n-i$ is even.
If this condition fails then $\star$ is zero and must be associative.
Otherwise there exist $i$-sets $S,T\subseteq [n]$ such that $\chi_S\star\chi_T=\chi_{S\triangle T} = \chi_{(S\triangle T)^c}$ belongs to the basis for $V_i(\frac12 Q_n)$ provided in Theorem~\ref{thm:halved-cube}.
The rest of the proof goes in the same way as the proof of Theorem~\ref{thm:NonAssocQn} for the hypercube $Q_n$.
\end{proof}

\subsection{Folded cube}

The \emph{folded cube} $\square_n$ can be obtained from the hypercube graph $Q_n$ by identifying each pair of vertices at distance $n$ from each other.
This is a distance regular graph of diameter $d=\lfloor n/2 \rfloor$ whose eigenvalues and multiplicities are~\cite[\S9.2]{DistReg1}
\[ \theta_i = n-4i \qand \dim(V_i) = \binom{n}{2i} \quad \text{for } i=0,1,\ldots,d=\lfloor n/2 \rfloor. \]

Equivalently, we can define the vertex set of the folded cube $\square_n$ to be $X:=\ZZ_2^{n-1}\times\{0\}$, and let two vertices be adjacent in $\square_n$ if they differ either in exactly one position or in all but the last position.
In other words, $\square_n$ is the Cayley graph $\Gamma(X, X_1\cup X_{n-1})$, where $X_i$ is the set of all elements in $X$ with exactly $i$ ones.
This allows us to determine the Norton algebra structure on each eigenspace of $\square_n$.
For every $S\subseteq[n]$ we define a linear character $\chi_S$ of $X$ by
\[ \chi_S(x) := \prod_{j\in S}(-1)^{x(j)} \quad\text{for all } x\in X. \]

\begin{theorem}\label{thm:folded-cube}
For $i=0,1,\ldots, d=\lfloor n/2 \rfloor$, the set $\{\chi_S: S\subseteq[n],\ |S|=2i\}$ is a basis for the eigenspace $V_i$ of the folded cube $\square_n$ such that for all $S,T\subseteq[n]$ with $|S|=|T|=2i$ we have 
\[ \chi_S \star \chi_T =
\begin{cases}
\chi_{S\triangle T} & \text{if } |S\triangle T|=2i \\
0 & \text{otherwise}.
\end{cases} \]
\end{theorem}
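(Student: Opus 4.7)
The plan is to apply the Cayley graph framework of Section~\ref{sec:Cayley}---in particular, Theorem~\ref{thm:Cayley} and Theorem~\ref{thm:Norton}---to the folded cube $\square_n$ realized as the Cayley graph $\Gamma(X, X_1 \cup X_{n-1})$ of $X = \ZZ_2^{n-1} \times \{0\}$. The first task is to verify that $\{\chi_S : S \subseteq [n],\ |S|\ \text{even}\}$ really does enumerate the linear characters of $X$. Because $x(n) = 0$ for every $x \in X$, we have $\chi_S = \chi_{S \triangle \{n\}}$, and within each pair $\{S, S \triangle \{n\}\}$ exactly one element has even cardinality. Combining this with Theorem~\ref{thm:character} and the count $\sum_{i} \binom{n}{2i} = 2^{n-1} = |X|$ shows that these characters form a complete, repetition-free list of the characters of $X$.

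Next I would compute the eigenvalue corresponding to $\chi_S$ with $|S|=2i$ via Theorem~\ref{thm:Cayley}. Since $X_1 = \{e_j : j \in [n-1]\}$ and $X_{n-1}$ is the singleton $\{\mathbf{1}_{[n-1]}\}$, the eigenvalue equals
\[ \chi_S(X_1)+\chi_S(X_{n-1}) = \bigl((n-1)-2|S\cap[n-1]|\bigr) + (-1)^{|S\cap[n-1]|}. \]
Splitting into the cases $n \notin S$ (so $|S\cap[n-1]|=2i$) and $n \in S$ (so $|S\cap[n-1]|=2i-1$), both computations yield $n-4i = \theta_i$. Together with the match $\#\{S\subseteq[n]:|S|=2i\} = \binom{n}{2i} = \dim V_i$, this confirms that $\{\chi_S:|S|=2i\}$ is a basis of $V_i$.

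For the Norton product, Theorem~\ref{thm:Norton} gives $\chi_S\star\chi_T = \chi_S\cdot\chi_T = \chi_{S\triangle T}$ if this character lies in $V_i$, and zero otherwise. Because $|S\triangle T|$ is automatically even, the character $\chi_{S\triangle T}$ is the basis vector of $V_{|S\triangle T|/2}$ indexed by the even-sized representative $S\triangle T$ itself (within the pair $\{S\triangle T,\,(S\triangle T)\triangle\{n\}\}$). Hence projecting onto $V_i$ fixes $\chi_{S\triangle T}$ precisely when $|S\triangle T|=2i$, yielding the stated formula.

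The only point genuinely requiring care is the indexing bookkeeping in the first step: unlike the halved-cube case where the identification $\chi_S = \chi_{S^c}$ forces the extra branch ``$|S\triangle T|=n-i$'' in Theorem~\ref{thm:halved-cube}, here the identification $\chi_S = \chi_{S\triangle\{n\}}$ instead pairs each even-sized subset with an odd-sized one, so the even-size indexing is internally consistent and no additional branch appears in the product formula. Once this bookkeeping is set up correctly, the rest is a routine invocation of Theorems~\ref{thm:Cayley} and~\ref{thm:Norton}.
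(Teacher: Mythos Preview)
Your proposal is correct and follows essentially the same approach as the paper: realize $\square_n$ as a Cayley graph, use the identification $\chi_S=\chi_{S\triangle\{n\}}$ to choose even-cardinality representatives, compute the eigenvalue $\chi_S(X_1\cup X_{n-1})=n-4i$ by the same two-case split on whether $n\in S$, and then invoke Theorem~\ref{thm:Norton} together with the parity observation $|S\triangle T|\equiv 0\pmod 2$. The one step you leave slightly implicit is that the even-cardinality subsets give \emph{distinct} characters (not merely at most $2^{n-1}$ of them); the paper handles this by recording the full ``if and only if'' statement $\chi_S=\chi_T\Leftrightarrow S\setminus\{n\}=T\setminus\{n\}$, and you can close the gap the same way or by noting that $S\mapsto S\cap[n-1]$ is a bijection from even-size subsets of $[n]$ to all subsets of $[n-1]$.
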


\begin{proof}
For any $S,T\subseteq[n]$, we have $\chi_S=\chi_T$ if and only if $S-\{n\}=T-\{n\}$.
Thus the set $\{\chi_S: S\subseteq[n]\}$ has cardinality $2^{n-1}=|X|$ and must equal the set $X^*$ of all linear characters of $X$, thanks to Theorem~\ref{thm:character}.
By Theorem~\ref{thm:Cayley}, this gives an eigenbasis of $\square_n$.
Any element of $X^*$ can be written as $\chi_S$ for some $S\subseteq[n]$ with an even cardinality $|S|=2i$ since adding or deleting $n$ from $S$ does not alter the corresponding character.
The eigenvalue associated with this eigenvector is
\[ \begin{aligned}
\chi_S(X_1\cup X_{n-1}) 
&= \sum_{j\in S-[n]} (-1) + \sum_{j\in[n-1]-S} 1 + \prod_{j\in S-\{n\}}(-1) \\
&= \begin{cases}
 -(2i-1) + n-2i - 1 = n-4i & \text{if } n\in S \\
 -2i + n-1-2i + 1 = n-4i & \text{if } n\notin S.
 \end{cases} 
 \end{aligned} \]
Thus $\{\chi_S: S\subseteq[n],\ |S| = 2i\}$ is a basis for the eigenspace $V_i$.
If $S$ and $T$ are $2i$-subsets of $[n]$ then $|S\triangle T| = |S|+|T|-2|S\cap T|=2j$ for some integer $j$ and $\chi_S\cdot \chi_T = \chi_{S\triangle T}\in V_j$.
Thus the projection of $\chi_S\cdot\chi_T$ to $V_i$ is either itself if $j=i$ or zero otherwise.
\end{proof}

\begin{remark}
Every element $u\in X=\ZZ_2^{n-1}\times\{0\}$ is uniquely determined by its support, and the linear character $\chi_u$ given in the proof of Theorem~\ref{thm:character} equals $\chi_S$, where $S:=\supp(u)$ if $|\supp(u)|$ is even or $S:=\supp(x)\cup\{n\}$ if $|\supp(u)|$ is odd.
\end{remark}

\begin{corollary}
For $i=0,1,\ldots,\lfloor n/2 \rfloor$, the Norton algebra $V_i(\square_n)$ is isomorphic to $V_{2i}(Q_n)$.
\end{corollary}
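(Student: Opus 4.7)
The plan is to observe that the two algebras have presentations that are literally identical on combinatorially matching bases, so the isomorphism is forced and can be written down directly.

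First, I would invoke Theorem~\ref{thm:folded-cube} to recall that $V_i(\square_n)$ has a basis $\{\chi_S^{\square_n} : S\subseteq[n],\ |S|=2i\}$ with product
\[ \chi_S^{\square_n}\star\chi_T^{\square_n} = \begin{cases} \chi_{S\triangle T}^{\square_n} & \text{if } |S\triangle T|=2i,\\ 0 & \text{otherwise.} \end{cases} \]
Next, I would invoke Corollary~\ref{cor:hypercube} applied at index $2i$ (noting that $2i\le 2\lfloor n/2\rfloor \le n$, so this makes sense) to recall that $V_{2i}(Q_n)$ has a basis $\{\chi_S^{Q_n}:S\subseteq[n],\ |S|=2i\}$ with the same product formula (with $\chi_S^{Q_n}$ in place of $\chi_S^{\square_n}$ throughout). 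Both bases are indexed by the same set of $2i$-subsets of $[n]$, and both satisfy the same structure constants.

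Therefore I would define the linear map $\phi: V_i(\square_n)\to V_{2i}(Q_n)$ on the basis by $\phi(\chi_S^{\square_n}):=\chi_S^{Q_n}$ for every $S\subseteq[n]$ with $|S|=2i$, and extend linearly. Since the two indexing sets agree, $\phi$ is an isomorphism of vector spaces. Since the multiplication tables on these bases are identical, a one-line check on basis pairs $(\chi_S^{\square_n},\chi_T^{\square_n})$ shows $\phi(\chi_S^{\square_n}\star\chi_T^{\square_n}) = \phi(\chi_S^{\square_n})\star\phi(\chi_T^{\square_n})$, splitting on the cases $|S\triangle T|=2i$ and $|S\triangle T|\ne 2i$.

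There is essentially no obstacle here: the hard work has already been done in establishing the two basis-and-product descriptions in Corollary~\ref{cor:hypercube} and Theorem~\ref{thm:folded-cube}. The only thing worth a brief comment is the range of indices: we need $2i\le n$ so that the basis $\{\chi_S^{Q_n}: |S|=2i\}$ of Corollary~\ref{cor:hypercube} exists, and this is guaranteed by $i\le \lfloor n/2\rfloor$. Thus the claimed isomorphism $V_i(\square_n)\cong V_{2i}(Q_n)$ holds for all $i=0,1,\ldots,\lfloor n/2\rfloor$.
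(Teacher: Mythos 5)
Your proposal is correct and matches the paper's approach: the paper also deduces the isomorphism immediately from Corollary~\ref{cor:hypercube} and Theorem~\ref{thm:folded-cube}, and your write-up simply makes explicit the basis-matching map $\chi_S \mapsto \chi_S$ and the identical structure constants that the paper leaves implicit.
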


\begin{proof}
This follows immediately from Corollary~\ref{cor:hypercube} and Theorem~\ref{thm:folded-cube}.
\end{proof} 

\subsection{Folded half-cube}\label{sec:halved-folded-cube}

For any even integer $n\ge6$, the \emph{folded half-cube} $\frac12 \square_n$ has vertex set $X$ consisting of all even weighted elements of $\ZZ_2^{n-1}\times\{0\}$ and edge set $E$ consisting of all unordered pairs of vertices differing in exactly $2$ or $n-2$ positions.
This is a distance regular graph of diameter $d=\lfloor n/4 \rfloor$.
For $i=0,1,\ldots,d$, the $i$th eigenvalue and its multiplicity are~\cite[\S9.2D]{DistReg1}
\[ \theta_i= \frac{(n-4i)^2-n}2 
\qand \dim(V_i)=
\begin{cases}
\binom{n}{2i} & \text{if } i<n/4 \\
\frac12\binom{n}{2i} & \text{if } i=n/4.
\end{cases}\]

The folded half-cube $\frac12\square_n$ is the Cayley graph $\Gamma(X,X_2\cup X_{n-2})$ of the finite abelian group $X$ with respect to $X_2\cup X_{n-2}$, where $X_i$ is the set of all elements in $X$ with exactly $i$ ones.
For every $S\subseteq[n]$ we define a linear character $\chi_S$ of $X$ by 
\[ \chi_S(x) := \prod_{j\in S} (-1)^{x(j)} \quad\text{for all } x\in X. \]

\begin{theorem}\label{thm:halved-folded-cube}
The Norton algebra $V_i(\frac12\square_n)$ has a basis $\{\chi_S: S\subseteq[n],\ |S|=2i\}$ if $0\le i<n/4$ or $\{\chi_S: S\subseteq[n],\ |S|=2i,\ 1\in S\}$ if $i=n/4$.
For any $S,T\subseteq[n]$ with $|S|=|T|=2i$ we have
\[ \chi_S \star \chi_T =
\begin{cases}
\chi_{S\triangle T} = \chi_{(S\triangle T)^c} & \text{if } |S\triangle T|\in\left\{2i, n-2i \right\} \\
0 & \text{otherwise.} 
\end{cases} \]
\end{theorem}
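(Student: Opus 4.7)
The plan is to combine the techniques from Theorem~\ref{thm:halved-cube} and Theorem~\ref{thm:folded-cube}, since $\frac12\square_n$ arises from the hypercube $Q_n$ by simultaneously halving and folding. I would apply Theorem~\ref{thm:character}, Theorem~\ref{thm:Cayley}, and Theorem~\ref{thm:Norton} just as in those earlier proofs. The vertex set $X$ inherits both constraints $x(n)=0$ and $\sum_{j\in[n-1]} x(j)\equiv 0\pmod 2$, which forces a $4$-to-$1$ identification among the $\chi_S$ and determines the appropriate indexing of basis elements.

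First I would determine when two characters coincide on $X$. The two defining constraints give $\chi_{\{n\}}(x) = \chi_{[n-1]}(x) = 1$ and hence $\chi_{[n]}(x)=1$ for all $x\in X$, so $\chi_S = \chi_T$ whenever $S\triangle T \in \{\emptyset, \{n\}, [n-1], [n]\}$. Since $n$ is even and $n\ge 6$ these four sets are distinct, so each equivalence class has exactly four elements and yields $2^n/4 = 2^{n-2} = |X|$ characters, which by Theorem~\ref{thm:character} must account for all linear characters of $X$. The four members of a class have sizes of both parities, and exactly two of them are even-sized, namely $S$ itself and $[n]\setminus S$, with sizes $|S|$ and $n-|S|$. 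Consequently $\{\chi_S : |S|=2i\}$ contains $\binom{n}{2i}$ distinct characters when $i<n/4$, while for $i=n/4$ (which forces $4\mid n$) both $S$ and $[n]\setminus S$ have size $n/2$ and the constraint $1\in S$ removes the duplication to leave $\binom{n-1}{2i-1}=\frac12\binom{n}{2i}$ elements, matching $\dim V_i$.

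Next I would verify that $\chi_S\in V_i$ by computing the eigenvalue $\chi_S(X_2\cup X_{n-2}) = \chi_S(X_2)+\chi_S(X_{n-2})$ for a representative with $n\notin S$ and $|S|=2i$. A count as in Theorem~\ref{thm:halved-cube} gives
\[ \chi_S(X_2) = \binom{2i}{2} + \binom{n-1-2i}{2} - 2i(n-1-2i) = \frac{(n-1-4i)^2 - (n-1)}{2}, \]
and since each element of $X_{n-2}$ has the form $[n-1]\setminus\{j\}$ with $j\in[n-1]$, splitting on $j\in S$ versus $j\notin S$ yields $\chi_S(X_{n-2}) = (-1)^{2i}(n-1-4i)=n-1-4i$. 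With $a:=n-4i$ these sum to $((a-1)^2-(n-1))/2 + (a-1) = (a^2-n)/2 = \theta_i$. The Norton product formula then follows from Theorem~\ref{thm:Norton}: $\chi_S\cdot\chi_T = \chi_{S\triangle T}$, and among the four class members of $S\triangle T$ the two even-sized ones are $S\triangle T$ and $(S\triangle T)^c$, so $\chi_{S\triangle T}$ lies in $V_j$ precisely when $2j\in\{|S\triangle T|, n-|S\triangle T|\}$; orthogonal projection onto $V_i$ fixes it when $|S\triangle T|\in\{2i,n-2i\}$ and annihilates it otherwise.

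The main obstacle will be the careful bookkeeping in the equivalence-class analysis, since two independent identifications are in play (one from folding, one from halving) and they must combine to give precisely $|X|$ distinct characters without further collapse, with the boundary case $i=n/4$ requiring the separate $1\in S$ restriction. Once this accounting is settled the eigenvalue computation and the projection step proceed exactly as in Theorem~\ref{thm:halved-cube} and Theorem~\ref{thm:folded-cube}.
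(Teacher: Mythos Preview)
Your approach is essentially the same as the paper's and the overall structure is correct, but there is one small gap in the eigenvalue verification. You propose to compute $\chi_S(X_2\cup X_{n-2})$ ``for a representative with $n\notin S$ and $|S|=2i$'', yet not every equivalence class containing a set of size $2i$ has such a representative. For example, if $S=\{1,\ldots,2i-1,n\}$ then the four class members $S$, $S\triangle\{n\}$, $S\triangle[n-1]$, $S^c$ have sizes $2i$, $2i-1$, $n-2i+1$, $n-2i$, and the only member that avoids $n$ and has even size is $S^c$, of size $n-2i$ rather than $2i$.

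The paper closes this gap by computing the eigenvalue in both cases $n\in S$ and $n\notin S$ (with $|S|=2i$ fixed). An alternative fix within your framework: your computation actually shows that for any $T\subseteq[n-1]$ of size $m$ the eigenvalue is $((n-2m)^2-n)/2$, which is invariant under $m\leftrightarrow n-m$; since every class has a unique even-sized representative avoiding $n$, of size either $2i$ or $n-2i$, the eigenvalue is $\theta_i$ in both cases.
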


\begin{proof}
One can check that $\chi_S=\chi_T$ if and only if $S-[n]=T-[n]$ or $S^c-[n]=T-[n]$.
Thus the set $\{\chi_S: S\subseteq[n]\}$ has cardinality $2^{n-2}=|X|$.
By Theorem~\ref{thm:character} and Theorem~\ref{thm:Cayley}, this set consists of all linear characters of $X$ and is an eigenbasis of $\frac12 \square_n$.
An element in this basis can be written as $\chi_S$ for some $2i$-set $S\subseteq[n]$ with $i\le n/4$, and it is an eigenvector of $\frac12\square_n$ corresponding to the eigenvalue
\[ \chi_S(X_2\cup X_{n-2}) = 
\begin{cases}
\binom{2i-1}{2} - (2i-1)(n-2i) + \binom{n-2i}{2} + 2i-1- (n-2i) = \frac{(n-4i)^2-n}{2} & \text{if } n\in S \\
\binom{2i}{2} - 2i(n-1-2i) + \binom{n-2i-1}{2} - 2i + (n-1-2i) = \frac{(n-4i)^2-n}{2} & \text{if } n\notin S.
\end{cases} \]
The desired basis for $V_i(\frac12\square_n)$ follows.

Let $S,T$ be $2i$-subsets of $[n]$.
Then $|S\triangle T|=|S|+|T|-2|S\cap T|=2j$ for some integer $j\ge0$. 
We have $\chi_S\cdot \chi_T = \chi_{S\triangle T} = \chi_{(S\triangle T)^c}$, which belongs to $V_j(\frac12\square_n)$ if $j\le \frac n4$ or $V_{\frac n2-j}(\frac12\square_n)$ if $f>\frac n4$. 
Thus the projection onto $V_i(\frac12\square_n)$ takes $\chi_S\cdot\chi_T$ to itself if $2j\in\{2i,n-2i\}$ or zero otherwise.
\end{proof}

\begin{corollary}
For $i=0,1,\ldots,\lfloor n/4\rfloor$, there is an algebra isomorphism $V_i(\frac12 \square_n) \cong V_{2i}(\frac12 Q_n)$.
\end{corollary}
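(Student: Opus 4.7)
The plan is to compare the descriptions of the two algebras given in Theorem~\ref{thm:halved-cube} (with parameter $2i$) and Theorem~\ref{thm:halved-folded-cube} (with parameter $i$) and observe that they are literally the same. Concretely, I will define the linear map
\[ \Phi: V_i(\tfrac12\square_n) \to V_{2i}(\tfrac12 Q_n), \qquad \chi_S^{\square} \mapsto \chi_S^{Q}, \]
where $\chi_S^{\square}$ denotes the basis element of $V_i(\frac12\square_n)$ indexed by $S$ and $\chi_S^Q$ the basis element of $V_{2i}(\frac12 Q_n)$ indexed by $S$.

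First I would verify that $\Phi$ is a well-defined vector space isomorphism. By Theorem~\ref{thm:halved-folded-cube}, $V_i(\frac12\square_n)$ has basis $\{\chi_S^{\square} : S\subseteq[n],\ |S|=2i\}$ when $i<n/4$, or $\{\chi_S^{\square} : S\subseteq[n],\ |S|=2i,\ 1\in S\}$ when $i=n/4$. By Theorem~\ref{thm:halved-cube} applied with index $2i$ (noting $2i\le\lfloor n/2\rfloor$ since $i\le\lfloor n/4\rfloor$), $V_{2i}(\frac12 Q_n)$ has basis $\{\chi_S^{Q} : S\subseteq[n],\ |S|=2i\}$ when $2i<n/2$, or $\{\chi_S^{Q} : S\subseteq[n],\ |S|=2i,\ 1\in S\}$ when $2i=n/2$. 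The two indexing sets coincide exactly, so $\Phi$ is a bijection between bases and therefore a linear isomorphism.

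Next I would check that $\Phi$ preserves the Norton product. Fix $2i$-subsets $S,T\subseteq[n]$. Both theorems assert that $\chi_S\star\chi_T$ equals $\chi_{S\triangle T}$ if $|S\triangle T|\in\{2i,n-2i\}$ and $0$ otherwise (and in the nonzero case $\chi_{S\triangle T}=\chi_{(S\triangle T)^c}$ picks out the same basis element on each side, including in the borderline case $i=n/4$ where the convention $1\in S$ is used). Thus
\[ \Phi(\chi_S^{\square}\star\chi_T^{\square}) = \Phi(\chi_{S\triangle T}^{\square}) = \chi_{S\triangle T}^{Q} = \chi_S^{Q}\star\chi_T^{Q} = \Phi(\chi_S^{\square})\star\Phi(\chi_T^{\square}) \]
in the nonzero case, and both sides vanish in the zero case. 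Extending bilinearly gives that $\Phi$ is an algebra isomorphism.

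There is essentially no obstacle here: once Theorem~\ref{thm:halved-cube} and Theorem~\ref{thm:halved-folded-cube} are established, the only thing to notice is that their descriptions match under $2i\leftrightarrow i$. The one place to be slightly careful is the boundary case $i=n/4$ (equivalently $2i=n/2$), where both bases are restricted by the condition $1\in S$ to eliminate the redundancy $\chi_S=\chi_{S^c}$; since this restriction is imposed in the same way on both sides, the identification still works and $\Phi$ is well-defined.
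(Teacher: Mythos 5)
Your proof is correct and follows the paper's approach exactly: the paper also deduces the isomorphism immediately by matching the basis descriptions and product formulas of Theorem~\ref{thm:halved-cube} (with index $2i$) and Theorem~\ref{thm:halved-folded-cube}, and your write-up simply makes that comparison explicit, including the careful handling of the boundary case $i=n/4$.
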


\begin{proof}
This follows immediately from Theorem~\ref{thm:halved-cube} and Theorem~\ref{thm:halved-folded-cube}.
\end{proof}

\section{Bilinear forms graphs}\label{sec:Hq}

As a $q$-analogue of the Hamming graph $H(d,e)$, the \emph{bilinear forms graph} $H_q(d,e)$ has vertex set $X=\Mat_{d,e}(\FF_q)$ consisting of all $d\times e$ matrices over a finite field $\FF_q$ and edge set $E$ consisting of all unordered pairs of vertices whose difference is of rank one.
Assume $d\le e$, without loss of generality. 
The graph $H_q(d,e)$ is distance regular and has diameter $d$~\cite[\S9.5.A]{DistReg1} whose eigenvalues were computed by Delsarte~\cite{Delsarte} using a recursive relation.
One can also use a method for distance regular graphs with classical parameters~\cite[\S8.4]{DistReg1} to show that for $i=0,1,\ldots,d$, the $i$th eigenvalue of $H_q(d,e)$ and its multiplicity are
\[ \theta_i = \frac{q^{d+e-i}-q^d-q^e+1}{q-1}  \qand
\dim(V_i) = \qbinom{d}{i}_q(q^e-1)\cdots(q^e-q^{i-1}). \]
Here we use the \emph{$q$-binomial coefficient} 
\[ \qbinom{d}{i}_q:=\frac{(1-q^d)(1-q^{d-1})\cdots(1-q^{d-i+1})}{(1-q^i)(1-q^{i-1})\cdots(1-q)} \]
which counts $i$-dimensional subspaces in the vector space $\FF_q^d$.

Note that $\dim(V_i)$ is exactly the number of $d\times e$ matrices over $\FF_q$ with rank $i$ since the column space of such a matrix is a subspace of $\FF_q^d$ with dimension $i$, and after fixing any basis for this space, the columns are determined by an $i\times e$ matrix over $\FF_q$ of rank $i$. 
This is essentially the \emph{rank decomposition} of a $d\times e$ matrix $M$ of rank $i$ into a product $M=CF$ of a $d\times i$ matrix $C$ and a $i\times e$ matrix $F$;
this decomposition is unique up to a right multiplication of $C$ by some invertible $i\times i$ matrix $R$ and a left multiplication of $F$ by $R^{-1}$.

We study the eigenspaces of the bilinear forms graph $H_q(d,e)$ using the fact that $H_q(d,e)$ is the Cayley graph $\Gamma(X, X_1)$ of the finite abelian group $X=\Mat_{d,e}(\FF_q)\cong \FF_q^{de}$ with respect to $X_1$, where $X_i$ denotes the set of $d\times e$ matrices over $\FF_q$ with rank $i$.
Let $\omega:=\exp(2\pi i/q)\in\CC$ be a primitive $q$th root of unity.
For each $u\in X$, we have a linear character $\chi_u$ defined by 
\begin{equation}\label{eq:CharHq}
\chi_u(x) := \prod_{j=1}^d\prod_{k=1}^e \omega^{u_{jk} x_{jk}} = \omega^{ \tr(u^tx) } \quad\text{for all } x\in X. 
\end{equation}

\begin{theorem}
For $i=0,1,\ldots,d$, the (complex) eigenspace $V_i$ of the bilinear forms graph $H_q(d,e)$ has a basis $\{\chi_u: u\in X_i\}$ such that for all $u,v\in X_i$,
\[ \chi_u\star\chi_v = 
\begin{cases} 
\chi_{u+v} & \text{if } u+v \in X_i \\
0 & \text{otherwise}.
\end{cases} \]
\end{theorem}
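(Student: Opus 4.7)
The plan is to build directly on Theorem~\ref{thm:Cayley} and Theorem~\ref{thm:Norton}, using the fact that $H_q(d,e)=\Gamma(X,X_1)$ is a Cayley graph of the additive group $X=\Mat_{d,e}(\FF_q)$. By Theorem~\ref{thm:character}, the characters $\chi_u$ defined in~\eqref{eq:CharHq} form an orthonormal basis of $\CC^X$ since $\chi_u(x)=\omega^{\tr(u^tx)}$ depends bilinearly on the entries of $u$ and $x$, and distinct $u$ yield distinct characters. By Theorem~\ref{thm:Cayley}, each $\chi_u$ is an eigenvector of $H_q(d,e)$ with eigenvalue $\chi_u(X_1)$, and this eigenvalue depends only on the rank of $u$ because $X_1$ is invariant under the $\GL_d(\FF_q)\times\GL_e(\FF_q)$-action by left and right multiplication and $\chi_u(PxQ)=\chi_{P^tuQ^t}(x)$.

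The central calculation is to show that $\chi_u(X_1)=\theta_i$ whenever $\rank(u)=i$. The plan is to parametrize rank-one matrices as outer products: every $x\in X_1$ equals $ab^t$ for some $a\in\FF_q^d\setminus\{0\}$ and $b\in\FF_q^e\setminus\{0\}$, and each rank-one matrix admits exactly $q-1$ such representations via the scaling $(a,b)\mapsto(ca,c^{-1}b)$ with $c\in\FF_q^\times$. Since $\tr(u^tab^t)=a^tub$, we obtain
\[
\chi_u(X_1)=\frac{1}{q-1}\sum_{a\in\FF_q^d\setminus\{0\}}\ \sum_{b\in\FF_q^e\setminus\{0\}}\omega^{(a^tu)\,b}.
\]
For fixed $a$, the inner sum equals $q^e-1$ when $a^tu=0$ and equals $-1$ otherwise, by the orthogonality relation~\eqref{eq:root} applied componentwise. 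The set $\{a\in\FF_q^d:a^tu=0\}$ is the left null space of $u$, a subspace of dimension $d-i$, so it contributes $q^{d-i}-1$ nonzero vectors with trivial inner sum and $(q^d-1)-(q^{d-i}-1)=q^d-q^{d-i}$ nonzero vectors with inner sum $-1$. Collecting terms gives
\[
\chi_u(X_1)=\frac{(q^{d-i}-1)(q^e-1)-(q^d-q^{d-i})}{q-1}=\frac{q^{d+e-i}-q^d-q^e+1}{q-1}=\theta_i.
\]

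Once this is established, we have $\{\chi_u:u\in X_i\}\subseteq V_i$, and since $|X_i|=\dim V_i$ and the $\chi_u$ are linearly independent in $\CC^X$, this set is a basis of $V_i$. The product formula is then immediate from Theorem~\ref{thm:Norton}: the entry-wise product satisfies $\chi_u\cdot\chi_v=\chi_{u+v}$ because $\tr(u^tx)+\tr(v^tx)=\tr((u+v)^tx)$, and $\chi_{u+v}$ has eigenvalue $\theta_i$ precisely when $\rank(u+v)=i$, so the orthogonal projection to $V_i$ fixes $\chi_{u+v}$ in that case and annihilates it otherwise.

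The only step requiring care is the eigenvalue computation; everything else is a direct appeal to the general Cayley-graph framework in Section~\ref{sec:Cayley}. The potential obstacle is keeping the count of left-null vectors and the resulting two-term character sum straight, but the identity simplifies cleanly to the known formula for $\theta_i$, which both confirms the assignment $\chi_u\in V_{\rank(u)}$ and matches the rank-based dimension count $\dim V_i=\binom{d}{i}_q(q^e-1)\cdots(q^e-q^{i-1})=|X_i|$.
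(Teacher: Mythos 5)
Your proposal is correct and follows essentially the same route as the paper: both use the Cayley-graph character framework of Section~\ref{sec:Cayley}, parametrize $X_1$ by outer products $y^tz$ (respectively $ab^t$) with the $(q-1)$-fold overcounting, apply the orthogonality relation~\eqref{eq:root} to reduce the eigenvalue $\chi_u(X_1)$ to a count of the left null space of $u$, and then read off the product formula from Theorem~\ref{thm:Norton}. The only difference is bookkeeping --- the paper sums over all pairs $(y,z)$ and subtracts the $q^d+q^e-1$ degenerate terms, while you exclude the zero vectors from the start --- which yields the same value $\theta_i=\frac{q^{d+e-i}-q^d-q^e+1}{q-1}$.
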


\begin{proof}
By Theorem~\ref{thm:character} and Theorem~\ref{thm:Cayley}, we have an eigenbasis $\{\chi_u: u\in X\}$ with $\chi_u$ being an eigenvector of the bilinear forms graph $H_q(d,e)$ corresponding to the eigenvalue $\chi_u(X_1)$ for all $u\in X$.
To compute the eigenvalue, we apply the rank decomposition to each $x\in X_1$ and write it as $x=y^tz$ for some nonzero vectors $y=(y_1,\ldots,y_d)\in\FF_q^d$ and $z=(z_1,\ldots,z_e)\in \FF_q^e$ with the understanding that rescaling $y$ and $z$ to $cy$ and $c^{-1}z$ for any $c\in\FF_q^\times$ does not alter $x$. 
Then for any $u\in X_i$ we have
\[ \begin{aligned}
\chi_u(X_1) &= \sum_{x\in X_1} \prod_{j=1}^d \prod_{k=1}^e \omega^{u_{jk} x_{jk}} \\
&= \frac1{q-1} \left( \sum_{y\in\FF_q^d} \sum_{z\in \FF_q^e} \prod_{j=1}^d \prod_{k=1}^e \omega^{ u_{jk} y_jz_k} - q^d-q^e+1 \right)\\
&= \frac1{q-1} \left( \sum_{y\in\FF_q^d} \prod_{k=1}^e \sum_{z_k\in\FF_q} \omega^{\sum_{j=1}^d u_{jk} y_j z_k} - q^d-q^e+1 \right)\\
&= \frac1{q-1} \left( \left( \sum_{y\in\FF_q^d:\, yu=0} q^e \right) - q^d-q^e+1 \right)\\
&= \frac1{q-1} \left( q^{d+e-i} - q^d-q^e+1 \right)
\end{aligned} \]
where the second last equality holds by Equation~\eqref{eq:root}.
Hence $\{\chi_u: u\in X_i\}$ is a basis of $V_i$.
If $u,v\in X_i$ then $\chi_u \cdot \chi_v = \chi_{u+v}\in V_j$ where $j=\rank(u+v)$, and the projection onto $V_i$ either fixes $\chi_{u+v}$ if $i=j$ or annihilates it otherwise.
\end{proof}

\begin{remark}
The eigenvalue $\chi_u(X_1)$ is a special case of the computation by Delsarte~\cite[Theorem A.2]{Delsarte} using recursion, but our calculation in the above proof is more direct.
\end{remark}

Next, we study the automorphisms of the Norton algebra $V_i(H_q(d,e))$.
If $i=0$ then this algebra is spanned by an idempotent $\chi_0$ and thus has a trivial automorphism group.
For $i\ge1$ we have the following result, where $I_n$ denotes the $n\times n$ identity matrix.

\begin{theorem}\label{thm:AutHq}
For $i=1,\ldots,d$, the automorphism group of the Norton algebra $V_i(H_q(d,e))$ admits a subgroup isomorphic to $\Mat_{d,e}(\FF_q) \rtimes \left( (\GL_d(\FF_q) \times \GL_e(\FF_q))/\{(cI_d, cI_e): c\in\FF_q^\times\} \right)$.
\end{theorem}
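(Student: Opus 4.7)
The plan is to build two families of algebra automorphisms of $V_i=V_i(H_q(d,e))$ and combine them into a semidirect product realizing the claimed subgroup, in close analogy with Theorem~\ref{thm:AutHamming} for the Hamming graph. The two families correspond to ``translation'' via characters of $X=\Mat_{d,e}(\FF_q)$ and ``linear substitution'' via left-right multiplication by $(P,Q)\in\GL_d(\FF_q)\times\GL_e(\FF_q)$.

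Concretely, for each $a\in\Mat_{d,e}(\FF_q)$ I define $\phi_a(\chi_u):=\chi_a(u)\chi_u=\omega^{\tr(a^tu)}\chi_u$, and for each $(P,Q)$ I define $\psi_{P,Q}(\chi_u):=\chi_{PuQ^{-1}}$, extended linearly to $V_i$; $\psi_{P,Q}$ is well-defined since $PuQ^{-1}$ has the same rank as $u$. Both are algebra automorphisms by the Norton product formula: when $u,v,u+v\in X_i$,
\[ \phi_a(\chi_u\star\chi_v)=\chi_a(u+v)\chi_{u+v}=\chi_a(u)\chi_a(v)\chi_{u+v}=\phi_a(\chi_u)\star\phi_a(\chi_v), \]
and similarly for $\psi_{P,Q}$ since $P(u+v)Q^{-1}=PuQ^{-1}+PvQ^{-1}$; both sides vanish when $u+v\notin X_i$. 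Routine calculations show that $a\mapsto\phi_a$ and $(P,Q)\mapsto\psi_{P,Q}$ are group homomorphisms, and using the cyclic invariance of the trace,
\[ \psi_{P,Q}\,\phi_a\,\psi_{P,Q}^{-1}(\chi_u)=\omega^{\tr(a^tP^{-1}uQ)}\chi_u=\omega^{\tr((P^{-t}aQ^t)^tu)}\chi_u=\phi_{P^{-t}aQ^t}(\chi_u), \]
so $\{\phi_a:a\in\Mat_{d,e}(\FF_q)\}$ is normalized by $\{\psi_{P,Q}\}$. Together they give a homomorphism from $\Mat_{d,e}(\FF_q)\rtimes(\GL_d(\FF_q)\times\GL_e(\FF_q))$ into $\Aut(V_i)$ whose kernel is $\{(0,cI_d,cI_e):c\in\FF_q^\times\}$: for $\phi_a=\id$ one needs $\chi_a(u)=1$ for every $u\in X_i$ and hence $a=0$; for $\psi_{P,Q}=\id$ one needs $PuQ^{-1}=u$, equivalently $Pu=uQ$, for every $u\in X_i$, and testing on the elementary matrices $e_je_k^t$ forces $P=cI_d$ and $Q=cI_e$ for a common scalar $c\in\FF_q^\times$. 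Passing to the quotient yields the desired subgroup of $\Aut(V_i)$.

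The main obstacle is the spanning statement that the rank-$i$ matrices $\FF_q$-span $\Mat_{d,e}(\FF_q)$ for $i\ge1$, which underlies both kernel computations by turning pointwise identities on $X_i$ into identities on the full matrix space. I would justify it by observing that the $\FF_q$-span of $X_i$ is a $(\GL_d(\FF_q)\times\GL_e(\FF_q))$-invariant subspace of the irreducible natural module $\FF_q^d\otimes_{\FF_q}\FF_q^e$ and is nonzero for $i\ge1$. A subsidiary but crucial technical choice is adopting the convention $(P,Q)\cdot\chi_u=\chi_{PuQ^{-1}}$ rather than $\chi_{PuQ}$, so that the scalar subgroup acting trivially is exactly $\{(cI_d,cI_e):c\in\FF_q^\times\}$, matching the form written in the statement.
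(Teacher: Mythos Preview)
Your proposal is correct and takes essentially the same approach as the paper: build translation automorphisms $\phi_a$ and linear-substitution automorphisms $\psi_{P,Q}$ (the paper splits the latter into separate left and right actions $\lambda_a$, $\rho_b$), verify the conjugation relation giving the semidirect product, and compute the kernel. The paper handles the kernel computations by constructing explicit pairs of matrices in $X_i$ that differ in a single entry rather than invoking your cleaner irreducibility/spanning argument for $\Mat_{d,e}(\FF_q)$, but the overall architecture is identical.
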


\begin{proof}
First, every $x\in X=\Mat_{d,e}(\FF_q)$ induces an automorphism $\phi_x$ of $V_i(H_q(d,e))$ by sending $\chi_u$ to $\chi_x(u) \chi_u$ for all $u\in X_i$, as one can check the following for all $u,v\in X_i$.
\begin{itemize}
\item
If $u+v\in X_i$ then $\phi_x(\chi_u)\star\phi_x(\chi_v) = \chi_x(u) \chi_x(v) \chi_u\star\chi_v = \chi_x(u+v) \chi_{u+v} =\phi_x(\chi_u\star\chi_v)$.
\item
If $u+v\notin X_i$ then $\phi_x(\chi_u)\star\phi_x(\chi_v) = \chi_x(u) \chi_x(v) \chi_u\star\chi_v = 0 =\phi_x(\chi_u\star\chi_v)$.
\end{itemize}
For any $x,y\in X$, we have $\phi_x\phi_y=\phi_{x+y}$ since if $u\in X_i$ then
\[ \phi_x(\phi_{y}(\chi_u))=\phi_x(\chi_y(u)(\chi_u))=\chi_x(u)\chi_y(u)\chi_u = \chi_{x+y}(u) \chi_u = \phi_{x+y}(\chi_u). \]
Suppose that $\phi_x(\chi_u)=\chi_x(u)\chi_u = \chi_u$, i.e., $\chi_x(u)=1$ for all $u\in X_i$.
For any $j\in[d]$ and $k\in[e]$, we can construct $u\in X_i$ in such a way that changing its $(j,k)$-entry gives another matrix $u'$ with the same rank, and $\chi_x(u) =\chi_x(u')$ implies that the $(j,k)$-entry of $w$ is zero.
Therefore $x\mapsto \phi_x$ gives a monomorphism from $X$ to the automorphism group of $V_i(H_q(d,e))$.

Next, every $a\in\GL_d(\FF_q)$ induces an automorphism $\lambda_{a}$ of $V_i(H_q(d,e))$ by sending $\chi_u$ to $\chi_{au}$ for all $u\in X_i$ as the following holds for all $u,v\in X_i$.
\begin{itemize}
\item
If $u+v\in X_i$ then $\lambda_a(\chi_u) \star \lambda_a(\chi_v) = \chi_{au}\star\chi_{av} = \chi_{au+av} = \chi_{a(u+v)} = \lambda_{a}(\chi_u\star\chi_v)$.
\item
If $u+v\notin X_i$ then  $\lambda_{a}(\chi_u) \star \lambda_{a}(\chi_v) = \chi_{au}\star\chi_{av} = 0=\lambda_{a}(\chi_u\star\chi_v)$.
\end{itemize}
If $a,a'\in\GL_d(\FF_q)$ then $\lambda_a\circ\lambda_{a'}=\lambda_{aa'}$.
Suppose that $\lambda_a(\chi_u)=\chi_{au}=\chi_u$, i.e., $au=u$ for all $u\in X_i$.
For any $j\in[d]$, there exists $u\in X_i$ such that its $(1,k)$-entry is one if $k=j$ or zero if $k\ne j$, and $au=u$ implies that the $j$th column of $a$ coincides with the first column of $u$.
This shows that $a=I_d$.
Hence $a\mapsto\lambda_a$ gives a monomorphism from $\GL_e(\FF_q)$ to the automorphism group of $V_i(H_q(d,e))$.

Similarly, every $b\in \GL_d(\FF_q)$ induces an automorphism $\rho_b$ of $V_i(H_q(d,e))$ by sending $\chi_u$ to $\chi_{ub^{-1}}$ for all $u\in X_i$, and $b\mapsto\rho_b$ gives a monomorphism from $\GL_e(\FF_q)$ to the automorphism group of $V_i(H_q(d,e))$.
It is clear that $\lambda_a$ and $\rho_b$ commute.
Suppose that $\lambda_a=\rho_b$, i.e., $au=ub^{-1}$ for all $u\in X_i$.
Below we distinguish two cases to show that $a=cI_d$ and $b=cI_e$ for some $c\in\FF_q^\times$.
\begin{itemize} 
\item
Assume $i=d=e$.
Taking $u=I_d$ gives $a=b^{-1}$.
Then $au=ub^{-1}=ua$ for all $u\in X_i$ means that $a$ is in the center of $\GL_d(\FF_q)$, that is, $a=cI_d$ for some $c\in\FF_q^\times$.
\item
Assume $i<d$ or $d<e$.
For any $j,j'\in[d]$ and any $k,k'\in[e]$, let $a_{jj'}$ and $b^{-1}_{k,k'}$ denote the $(j,j')$-entry of $a$ and $(k,k')$-entry of $b^{-1}$.
If $j\ne j'$ and $k\ne k'$ then there exists $u\in X_i$ such that its $(j,k)$-entry is $1$ and all other entries on the $j$th and $j'$th rows and on the $k$th and $k'$th columns are zero, and taking the $(j,k)$-entry, $(j,k')$-entry, and $(j',k)$-entry of $au=ub^{-1}$ gives $a_{jj}=b^{-1}_{kk}$ and $a_{j'j}=b^{-1}_{kk'}=0$.
Thus $a=cI_d$ and $b^{-1}=cI_e$ for some $c\in\FF_q^\times$.
\end{itemize}

It follows that the group $G$ generated by $\{\lambda_a: a\in \GL_d(\FF_q)\}$ and $\{\rho_b:b\in \GL_e(\FF_q)\}$ is isomorphic to $(\GL_d(\FF_q)\times\GL_e(\FF_q) )/ \{ (cI_d, cI_e): c\in\FF_q^\times\}$.
This group has a trivial intersection with the group $H:=\{\phi_x: x\in X\}$, since if $\phi_x=\lambda_a\rho_b$ then $\chi_x(u) \chi_u = \chi_{aub^{-1}}$ for all $u\in X_i$ and $\phi_x$ must be the identity mapping.
Moreover, we have $\rho_b^{-1} \lambda_a^{-1} \phi_x \lambda_a\rho_b = \chi_{a^t x (b^{-1})^t}$ since if $u\in X_i$ then
\[ \rho_b^{-1} \lambda_a^{-1} \phi_x \lambda_a\rho_b (u) 
= \chi_x(aub^{-1}) \chi_{a^{-1}aub^{-1}b} = \chi_{a^t x (b^{-1})^t}(u) \chi_u \]
where the last equality holds by Equation~\eqref{eq:CharHq} and the fact that $\tr(x^t aub^{-1} ) = \tr(b^{-1} x^t a u)$.
Hence $HG$ contains $H$ as a normal subgroup and must be isomorphic to $H\rtimes G$.
\end{proof}

Now we measure the nonassociativity of the Norton product on $V_i(H_q(d,e))$.
The case $d=1$ is already done as $H_q(1,e)$ is a complete graph isomorphic to the Hamming graph $H(1,q^e)$.

\begin{theorem}
Assume $d\ge2$. 
Then the Norton product $\star$ on the eigenspace $V_i$ of $H_q(d,e)$ is associative if $i=0$ or totally nonassociative if $i=1,\ldots,d$.
\end{theorem}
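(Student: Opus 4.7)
The plan is as follows. For $i=0$, the eigenspace $V_0$ is one-dimensional and spanned by the idempotent $\chi_0$, so $\star$ is trivially associative. For $i=1,\ldots,d$ with $d\ge 2$, total nonassociativity will follow by transporting the inductive tree arguments developed in Section~\ref{sec:NonAssoc} for the Hamming graphs and hypercubes. The guiding observation is that the product formula $\chi_u\star\chi_v=\chi_{u+v}$ when $\rank(u+v)=i$ and $\chi_u\star\chi_v=0$ otherwise has exactly the shape of the Hamming-graph product in Theorem~\ref{thm:NortonHamming}, with ``rank $i$'' in place of ``support of size $i$''. So once I isolate a small collection of rank-$i$ matrices whose characters have a prescribed multiplication table, the combinatorial inductions from the proofs of Theorems~\ref{thm:NonAssocH3}, \ref{thm:NonAssocH4}, and \ref{thm:NonAssocQn} will transplant almost verbatim, since those proofs depend only on the multiplication table of the chosen characters and not on the ambient graph.

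For $q\ge 4$ this reduction is essentially automatic. For any $u\in X_i$, every scalar multiple $cu$ with $c\in\FF_q^\times$ still lies in $X_i$, so $\vspan\{\chi_{cu}:c\in\FF_q^\times\}$ is a subalgebra with the same multiplication table as $V_1(H(1,q))$. Proposition~\ref{prop:V1} gives total nonassociativity of this subalgebra when $q\ge 4$, and since total nonassociativity passes from a subalgebra up to the ambient algebra (the same distinguishing values work in both), $V_i(H_q(d,e))$ is totally nonassociative.

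For $q\in\{2,3\}$ the scalar subalgebra is too small and the hypothesis $d\ge 2$ must be used to produce a richer substructure. In the $q=2$ case I would pick $u,v\in X_i$ with $u+v\in X_i$; when $i=1$ one may take $u=e_1e_1^t$ and $v=e_1(e_1+e_2)^t$ (so $u+v=e_1e_2^t$), and when $i\ge 2$ one may embed $I_i$ and a companion matrix $A$ of an irreducible polynomial of degree $i$ over $\FF_2$ as the top-left $i\times i$ block of $u$ and $v$, respectively, so that both $A$ and $I+A$ are invertible. Because $2a=0$ over $\FF_2$, each of $\chi_u,\chi_v,\chi_{u+v}$ squares to $0$ under $\star$ and each pairwise product gives the third character, exactly reproducing the multiplication table of the basis $\{\chi_R,\chi_S,\chi_T\}$ of $V_2(Q_3)$ in Example~\ref{example:H32}. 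Lemma~\ref{lem:tree} then applies to this triple, and the case analysis in the proof of Theorem~\ref{thm:NonAssocQn} distinguishes the two parenthesizations for any distinct $s,t\in\T_m$. In the $q=3$ case I would instead pick a triple $u,v,w\in X_i$ whose pattern of rank-sums mimics the triple used in the proof of Theorem~\ref{thm:NonAssocH3} (for instance $u=e_1e_1^t$, $v=2u$, $w=e_2e_1^t$ when $i=1$ and $d=2$, with an obvious block extension for larger $i$ and $d$), giving $u+v=3u=0\notin X_i$ while $u+w,v+w\in X_i$, and run the three-case induction from that proof using the analogue of Lemma~\ref{lem:H3}.

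The hard part will be the linear-algebraic input common to both $q\in\{2,3\}$ subcases: certifying, uniformly in $(d,e,i)$ with $d\ge 2$ and $1\le i\le d\le e$, the existence of rank-$i$ matrices with the required rank-sum pattern. The $q=2$ subcase is the most delicate because there is no scalar freedom to exploit, and the argument must rely entirely on the bilinear geometry of $\Mat_{d,e}(\FF_2)$. Once these geometric ingredients are in place, the tree-level combinatorics from Section~\ref{sec:NonAssoc} yields total nonassociativity on the nose.
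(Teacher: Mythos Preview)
Your plan is essentially the paper's own: handle $i=0$ trivially, then split on $q$ and declare that the tree inductions of Theorems~\ref{thm:NonAssocH4}, \ref{thm:NonAssocH3}, and \ref{thm:NonAssocQn} transplant once suitable rank-$i$ matrices are exhibited. The paper is in fact terser than you and leaves the existence of those matrices as an exercise, so your explicit companion-matrix construction for $q=2$ and the $v=2u$ choice for $q=3$ add detail the paper omits.

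One step is miswritten, however. The claim that $\vspan\{\chi_{cu}:c\in\FF_q^\times\}$ has the multiplication table of $V_1(H(1,q))$ holds only for \emph{prime} $q$: when $q=p^k$ with $k\ge 2$ the additive group of $\FF_q$ is $\ZZ_p^k$ rather than $\ZZ_q$, and for instance at $q=4$ every $\chi_{cu}\star\chi_{cu}$ vanishes (since $2c=0$), whereas in $V_1(H(1,4))$ one has $\chi_1\star\chi_1=\chi_2\ne 0$. So Proposition~\ref{prop:V1} cannot be invoked through that isomorphism for $q=4,8,9,16,\ldots$. The fix is to split by characteristic rather than by $q$: for $\mathrm{char}\,\FF_q\ge 5$ restrict to the prime subfield and apply Proposition~\ref{prop:V1} to $V_1(H(1,p))$; for characteristic $2$ or $3$ (regardless of $q$) fall back on your $q=2$ or $q=3$ arguments, which require only matrices with the stated rank-sum pattern and are already supplied by scalar multiples $u,\,cu,\,(1+c)u$ with $c\in\FF_q\setminus\{0,1\}$ in characteristic $2$, and by $u,\,2u,\,\alpha u$ with $\alpha\in\FF_q\setminus\FF_3$ in characteristic $3$ when $q>3$. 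The paper's own ``$q\ge 4$'' clause shares this imprecision.
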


\begin{proof}
The Norton product $\star$ on $V_0$ is associative as $V_0$ is one-dimensional.
Assume $i\ge1$ below.


For $q\ge4$, the proof of Theorem~\ref{thm:NonAssocH4} is still valid since  any $u\in X_i$ gives $cu\in X_i$ if $c\in[q-1]$.

For $q=3$, the proof of Theorem~\ref{thm:NonAssocH3} is still valid since one can find matrices $u, v, w\in X_i$ such that $u+v\notin X_i$, $u+w\in X_i$, and $v+w\in X_i$.

For $q=2$, the proof of Theorem~\ref{thm:NonAssocQn} is still valid since there exist matrices $u,v,w\in X_i$ such that $u+v=w$, which implies $u+w=v$ and $v+w=u$.

We leave the existence of $u,v,w$ in the above cases as an exercise to the reader.
\end{proof}

\section{Remarks and questions}\label{sec:questions}

In this paper we construct a basis for each eigenspace $V_i$ of the Hamming graph $H(n,e)$ using the linear characters of the vertex set $X=\ZZ_e^n$ treated as a finite abelian group.
Our basis is complex and can be converted to a real basis, but the existence of a basis over $\ZZ$ or even $\{0,1,-1\}$ needs further investigation.

We use our basis to provide a formula for the Norton product $\star$ on $V_i$ and obtain the following result on the automorphism group of the Norton algebra $(V_i,\star)$.
\begin{itemize}
\item
It is the trivial group if $i=0$.
\item
It is isomorphic to $\SS_e\wr\SS_n$ if $i=1$.
\item
It is isomorphic to $\SS_3\wr\SS_{2^{n-1}}$ if $e=3$ and $i=n$. 
\item
It is the general linear group of $V_i$ if $e=2$ and either $i>\lfloor 2n/3 \rfloor$ or $i$ is odd.
\item
It admits a subgroup is isomorphic to $(\ZZ_e\rtimes\ZZ_e^\times) \wr \SS_n$ if $e\ge3$ and $i\ge1$.
\item
It admits a subgroup is isomorphic to $\SS_n^B/\{\pm1\}$ if $e=2$ and $1\le i<n$ is even.
\end{itemize}
The groups mentioned above are all different, although most of them are related to the wreath product with symmetric groups. 
It will be interesting to see a complete description of the automorphism groups of all Norton algebras of the Hamming graph $H(n,e)$.

We also measure the nonassociativity of the Norton product on $V_i(H(n,e))$ and show that this commutative product as nonassociative as possible, except for some special cases in which it is either associative for trivial reasons (being zero or defined on a one-dimensional space) or equally as nonassociative as the double minus operation $a\ominus b:=-a-b$.
We are curious about whether the Norton algebras of other distance regular graphs are totally nonassociative in most cases and whether they could be related to the double minus operation or other elementary operations in some special cases.

Our results restrict to the hypercube $Q_n=H(n,2)$ and extend to the halved and/or folded cubes. 
We have algebra isomorphisms $V_i(\square_n)\cong V_{2i}(Q_n)$ and $V_i(\frac12 \square_n)\cong V_{2i}(\frac12 Q_n)$.
More generally, the folded graph $\overline\Gamma$ can be defined for any antipodal distance regular graph $\Gamma$ and is still distance regular~\cite[Proposition~2.14]{DistReg2}.
We would like to know if the Norton algebras of $\overline\Gamma$ and $\Gamma$ are related in the same way as above.

For other distance regular graphs, the linear character approach should be valid as long as they are also Cayley graphs of finite abelian groups, such as the square graph of the hypercube~\cite{SquareCube} and the alternating forms graphs~\cite[\S9.5B]{DistReg1}, 
but a different method may be necessary otherwise.
For instance, the Johnson graphs are not Cayley graphs in general, but they have been heavily studied and their spectra can be obtained by linear algebra (see Burcroff~\cite{Burcroff}) or representation theory (see Krebs and Shaheen~\cite{KrebsShaheen}).
It would be interesting to see whether the existing constructions of the eigenspaces of the Johnson graphs could be used to determine their Norton algebras.

\section*{Acknowledgements}

We thank Paul Terwilliger for encouraging conversations on this work. 
We also thank the developers of SageMath for providing such a useful computational tool.

\end{document}